\newcommand{\R}{{\mathbb R}}
\newcommand{\beq}{\begin{equation}}
\newcommand{\eeq}{\end{equation}}
\newcommand{\ben}{\begin{eqnarray}}
\newcommand{\een}{\end{eqnarray}}
\newcommand{\beno}{\begin{eqnarray*}}
\newcommand{\eeno}{\end{eqnarray*}}
\newtheorem{thm}{Theorem}[section]
\newtheorem{defi}[thm]{Definition}
\newtheorem{lem}[thm]{Lemma}
\newtheorem{prop}[thm]{Proposition}
\newtheorem{coro}[thm]{Corollary}
\newtheorem{rmk}[thm]{Remark}
\begin{document}

\title[Serrin's overdetermined problem]{On Serrin's overdetermined problem  and a conjecture of Berestycki, Caffarelli and Nirenberg}

\author[K. Wang]{ Kelei Wang}
 \address{\noindent K. Wang-
 School of Mathematics and Statistics, Wuhan University, Wuhan 430072, China.}
%\email{wangkelei@wipm.ac.cn}

\author[J. Wei]{Juncheng Wei}
\address{\noindent J. Wei -Department of Mathematics, University of British
Columbia, Vancouver, B.C., Canada, V6T 1Z2  }
\email{jcwei@math.ubc.ca}

\begin{abstract}
This paper concerns rigidity results to Serrin's overdetermined problem in an epigraph
$$ \left\{\begin{aligned}
&\Delta u+ f(u)=0,\ \ \ \mbox{in}\ \Omega=\{(x^\prime,x_n): x_n>\varphi (x^\prime)\},\\
&u>0,\ \ \ \mbox{in}\ \Omega,\\
&u=0,\ \ \ \mbox{on}\ \partial\Omega,\\
&|\nabla u|=const.,~~\mbox{on}~\partial\Omega.
                          \end{aligned} \right .
$$
We prove that up to isometry the epigraph must be an half space and
that the solution $u$ must be one-dimensional, provided that one of the
following assumptions are satisfied: either $n=2$; or $ \varphi $ is
globally Lipschitz, or $n \leq 8$ and $ \frac{\partial u}{\partial
x_n} >0$ in $\Omega$. In view of the counterexample constructed in \cite{DPW} in
dimensions $n\geq 9$ this result is optimal. This partially answers a conjecture of Berestycki, Caffarelli and Nirenberg \cite{BCN}.

\end{abstract}

\keywords{}

\subjclass{}

\maketitle

\date{}

\section{Introduction}
\setcounter{equation}{0}

This paper is concerned with the one dimensional symmetry problem for
the Serrin's  overdetermined problem in an epigraph.  More precisely we consider solutions to the following overdetermined problem:
\begin{equation}\label{equation 00}
 \left\{\begin{aligned}
&\Delta u+ f(u)=0,\ \ \ \mbox{in}\ \Omega\\
&u>0,\ \ \ \mbox{in}\ \Omega,\\
&u=0,\ \ \ \mbox{on}\ \partial\Omega,\\
&\frac{\partial u}{\partial \nu}=const.,~~\mbox{on}~\partial\Omega
                          \end{aligned} \right.
\end{equation}
where $f$ is a Lipschitz nonlinearity, $ \nu$ is the outer normal at $\partial \Omega$, and $ \frac{\partial u}{\partial \nu}$ is a constant  which is not prescribed a priori.

\medskip

A classical result of  Serrin's \cite{Serrin} asserts that if $\Omega$ is a bounded and smooth domain for which there is a positive solution to the overdetermined equation (\ref{equation 00}) then $\Omega$ is a sphere and $u$ is radially symmetric.

In the analysis of blown up version of free boundary problem, it is natural also to consider Serrin's overdetermined problem in unbounded domains. (See Berestycki-Caffarelli-Nirenberg \cite{BCN0}.)   A natural class of unbounded domains to be considered are epigraphs, namely domains $\Omega$ of the form
  \begin{equation}
  \label{epi}
  \Omega = \{ x \in \R^{n} \ /\ x_N > \varphi (x^{'}) \}
  \end{equation}
  where $ x^{'}= (x_1, \ldots, x_{n-1})$ and  $\varphi: \R^{n-1}\to \R$ is a smooth function. In \cite{BCN},  Berestycki, Caffarelli and Nirenberg proved, under conditions on $f$
  that are satisfied for instance the Allen-Cahn nonlinearity below, the following result: If  $\varphi$ is uniformly Lipschitz and {\em asymptotically flat} at infinity, and Problem \eqref{equation 00} is solvable, then $\varphi$ must be a linear function, in other words $\Omega$ must be a {\em half-space}. This result was improved by Farina and Valdinoci \cite{F1}, by lifting the asymptotic flatness condition and smoothness on $f$, under the dimension constraint $n\leq 3$ and other assumptions (see remarks below). When the epigraph is  coercive (see (\ref{coercive}) below) they can also consider an arbitrary nonlinearity.

  \medskip
  In \cite[pp.1110]{BCN}, the following conjecture on Serrin's overdetermined problem in unbounded domains was raised.

\medskip

  \noindent
  {\bf Berestycki-Caffarelli-Nirenberg Conjecture:}  {\em Assume that   $ \Omega$  is  a smooth domain with $ \Omega^c$ connected and that there is a bounded positive solution of (\ref{equation 00}) for some Lipschitz function $f$ then $\Omega $ is  either a half-space, or a cylinder $\Omega = B_k\times \R^{n-k}$, where $B_k$ is a $k$-dimensional Euclidean ball, or  the complement of a ball or a cylinder.}

\medskip

In this paper we are mainly concerned with the BCN conjecture in the epigraph case (\ref{epi}), namely the following overdetermined problem
 \begin{equation}\label{equation 0}
 \left\{\begin{aligned}
&\Delta u+ f(u)=0,\  u>0\ \ \mbox{in}\ \Omega= \{ x_n >\varphi (x^{'}) \} \\
&u=0,\ \ \ \mbox{on}\ \ \{ x_n=\varphi (x^{'})\},\\
&\frac{\partial u}{\partial \nu}=const.,~~\mbox{on}~\ \ \{ x_n= \varphi (x^{'}) \}.
                          \end{aligned} \right.
\end{equation}

 In this  case,  the BCN conjecture states that if Serrin's problem (\ref{equation 0}) is solvable, then $\Omega$ must be an half-space.   In a recent paper, del Pino, Pacard and the second author \cite{DPW} constructed an epigraph, which is a perturbation of the Bombieri-De Giorgi-Giusti minimal graph,   such that problem (\ref{equation 0}) admits a solution.  This counterexample requires dimension $n \geq 9$.  It remains open if the BCN Conjecture is true in low dimensions $ n\leq 8$. In this paper we shall give an affirmative answer to this question.

Before we proceed, we introduce the assumptions on the nonlinearity.  Let $ W(u)= -\int_0^u f(s) ds$. We assume that  $W$ is a standard double well potential, that is, $W\in
C^2([0,+\infty))$, satisfying
\begin{itemize}
\item [W1)]{\em $W\geq0$, $W(1)=0$ and $W>0$ in $[0,1)$;}
\item [W2)] {\em for some $\gamma\in(0,1)$, $W^\prime<0$ on
$(\gamma,1)$;}
\item [W3)] {\em there exists a constant $\kappa>0$, $W^{\prime\prime}\geq\kappa>0$ for all
$x\geq\gamma$;}
\item [W4)] {\em there exists a constant $p>1$, $W^\prime(u)\geq c(u-1)^p$ for
$u>1$.}
\end{itemize}

Moreover, we also assume that $W$ satisfies
\begin{enumerate}
\item [W5)] {\em $W^\prime<0$ in $(0,1)$, and either $W^\prime(0)\neq 0$ or
\[W^\prime(0)=0\quad \mbox{and}\quad W^{\prime\prime}(0)\neq 0.\]}
\end{enumerate}

A prototype example is $W(u)=(1-u^2)^2/4$ which gives the  Allen-Cahn
equation.

Under hypothesis (W1-4), there exists a unique function $g$ satisfying
\begin{equation}\label{1d profile}
 \left\{\begin{aligned}
&g^{\prime\prime}=W^\prime(g),\ \ \ \mbox{on}\ [0,+\infty),\\
&g(0)=0,\ \ \ \lim_{t\to+\infty}g(t)=1.
                          \end{aligned} \right .
\end{equation}
Moreover, $g$ has the following first integral:
\begin{equation}\label{first integral}
g^\prime(t)=\sqrt{2W(g(t))}>0,\quad \mbox{on}\ [0,+\infty).
\end{equation}
As $t\to+\infty$, $g(t)$ converges to $1$ exponentially. Hence the
following quantity is finite:
\[\sigma_0:=\int_0^{+\infty}\frac{1}{2}\big|g^\prime(t)\big|^2+W(g(t))dt <+\infty.\]

From now on we always assume that $W $ satisfies (W1-5).

\medskip

Our first result proves the conjecture in dimension 2 for any epigraph.

\begin{thm} \label{main result 4}
Let $ n=2$ and $W$ satisfy (W1-5).  If Serrin's overdetermined problem (\ref{equation 0}) has a solution then $ \Omega=\{ x_n >\varphi (x^{'}) \}$ must be a half space and up to isometry  $u(x)\equiv g(x\cdot e)$
for some unit vector $e$.
\end{thm}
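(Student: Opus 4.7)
The plan is to reduce to a one-dimensional symmetry problem by establishing, in order, (a) monotonicity of $u$ in the $x_n$ direction, (b) stability of $u$, and then (c) invoking a 2D De~Giorgi/Ghoussoub--Gui type rigidity theorem, adapted to the epigraph.

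First, hypotheses (W1)--(W4) together with the maximum principle (applied on exhausting balls) give the a priori bound $0 < u \leq 1$ in $\Omega$; in particular $(u-1)_+$ is a subsolution that must vanish because of the superlinear penalty in (W4). Next I would run the sliding method of Berestycki--Nirenberg in the $x_n$ direction: set $u_t(x',x_n) := u(x', x_n - t)$ on $\Omega_t := \Omega + t e_n \subset \Omega$ and compare $u_t$ with $u$ on $\Omega_t$. The overdetermined condition $|\nabla u| = c$ on $\partial\Omega$, combined with Hopf's lemma, rules out an interior contact being created from the boundary; the strong maximum principle applied to the linearized equation then gives $\partial_{x_n} u > 0$ in $\Omega$. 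Because $\partial\Omega$ is only $1$-dimensional, no asymptotic flatness of $\varphi$ is needed at this step.

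Monotonicity produces a positive solution $\phi := \partial_{x_n}u$ of the linearized equation $\Delta\phi + f'(u)\phi = 0$, which via the Berestycki--Caffarelli--Nirenberg trick (testing with $\psi^2/\phi$) implies the stability inequality
\begin{equation*}
\int_{\Omega} |\nabla\psi|^{2}\, dx \;\geq\; \int_{\Omega} f'(u)\,\psi^{2}\, dx \qquad \text{for every } \psi \in C_{c}^{\infty}(\Omega).
\end{equation*}
Armed with stability I would adapt the 2D argument of Ghoussoub--Gui and Farina--Sciunzi--Valdinoci: plug a logarithmic cutoff $\psi_R$ multiplied by $|\nabla u|$ into the stability inequality, use the geometric (Sternberg--Zumbrun) Poincar\'e identity expressing $|\nabla|\nabla u||^{2} = |\nabla u|^{2}(\kappa^{2} + |\nabla_{T}\log|\nabla u||^{2})$ on each regular level set, and pass to the limit $R \to \infty$. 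This forces the level sets of $u$ to be straight lines with $|\nabla u|$ constant along them, hence $u(x) = U(x\cdot e)$ for some unit vector $e$. Since $u = 0$ on $\partial\Omega$, the boundary must be a level set of $x\mapsto x\cdot e$, i.e.\ a straight line, so $\Omega$ is a half-plane; a standard ODE comparison using (W1)--(W5) then identifies $U$ with the profile $g$ from \eqref{1d profile}.

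I expect the two main obstacles to be Step~(a) and the epigraph version of Step~(c). For monotonicity, the absence of any decay or asymptotic assumption on $\varphi$ means one cannot simply start sliding from $t=+\infty$; one must exploit the 2D structure of $\partial\Omega$ and the constant-flux condition to ensure the comparison goes through at every stage. For the 2D rigidity, the usual logarithmic cutoff on $\mathbb{R}^{2}$ must be modified so that the boundary integrals produced by integration by parts along $\partial\Omega$ are controlled; it is precisely the overdetermined condition $|\nabla u|\equiv c$ on $\partial\Omega$ that forces these boundary contributions to drop out and allows the Sternberg--Zumbrun identity to deliver the desired rigidity.
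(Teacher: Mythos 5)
Your proposal takes the ``classical'' two–dimensional De~Giorgi route (monotonicity $\Rightarrow$ stability $\Rightarrow$ Sternberg--Zumbrun/Ghoussoub--Gui Liouville argument), whereas the paper proves Theorem~\ref{main result 4} by a completely different, measure-theoretic route that never establishes $\partial_{x_n}u>0$. The paper first derives $|\nabla u|=\sqrt{2W(0)}$ on $\partial\Omega$ (Theorem~\ref{reduction to free boundary}) and the Modica inequality, concludes from mean convexity (Remark~\ref{rmk 1}) that $\{u=0\}$ is convex, hence $\varphi$ concave; it then proves the linear energy bound $\int_{B_R}\tfrac12|\nabla u|^2+W(u)\chi_{\{u>0\}}\le CR$ directly by integrating $-W'(u)=\Delta u$ and using the geometry of the convex complement; with this bound the Hutchinson--Tonegawa/varifold machinery of Section~3 yields a stationary, integer $1$-rectifiable blow-down varifold $V=\sum n_i[L_i]$ supported on rays; the first-variation balancing identity $\sum n_ie_i=0$, combined with convexity of $\mathbb{R}^2\setminus\Omega_\infty$, forces the blow-down to be a line, and convexity of $\{u=0\}$ then forces $\partial\Omega$ itself to be a line.

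The genuine gap in your plan is step~(a). For a general epigraph in $\mathbb{R}^2$ --- with no Lipschitz, coercivity, or asymptotic-flatness hypothesis on $\varphi$ --- there is currently no proof that $\partial_{x_n}u>0$. The paper itself states that it is ``not clear at present'' whether the monotonicity condition~\eqref{monotinicity in one direction} follows from the other assumptions, and it is precisely to avoid this issue in dimension two that the authors switch to the varifold argument. The sliding method you invoke (start from $t\gg1$, decrease to the first contact) requires a maximum principle in the unbounded, a priori wild domain $\Omega+te_n$, and that is exactly where Berestycki--Caffarelli--Nirenberg use global Lipschitz or coercivity. You flag ``exploit the 2D structure of $\partial\Omega$ and the constant-flux condition'' but give no mechanism; without it, steps~(b) and~(c) have nothing to stand on. Note also that, once one \emph{does} assume Lipschitz, your route is essentially Farina--Valdinoci's proof, which the paper explicitly cites as the prior result that Theorem~\ref{main result 4} improves by removing the Lipschitz assumption; so your argument, as written, recovers the old theorem rather than the new one. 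Your anticipated difficulty in step~(c) (boundary terms in the Sternberg--Zumbrun identity) is real but is the smaller of the two issues: Modica gives mean-convexity of $\partial\Omega$ and the overdetermined condition makes the tangential derivative of $|\nabla u|$ vanish on $\partial\Omega$, so the boundary contribution has a favorable sign, as in Farina--Valdinoci. It is step~(a) that is the missing idea.
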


Our second result proves the conjecture in all dimensions for any
Lipschitz or coercive graph.

\begin{thm}
\label{main result 3} Assume that  $\varphi$ is globally Lipschitz.
If Serrin's overdetermined problem \eqref{equation 0} has a solution
then $ \Omega=\{ x_n >\varphi (x^{'}) \}$ must be a half space and
up to isometry  $u(x)\equiv g(x\cdot e)$ for some unit vector $e$.
\end{thm}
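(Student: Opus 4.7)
The plan is to reduce the globally Lipschitz epigraph setting to the case of asymptotic flatness at infinity and then invoke the original Berestycki-Caffarelli-Nirenberg theorem from \cite{BCN}. The reduction goes through a blow-down analysis combined with Moser's rigidity theorem for entire Lipschitz minimal graphs.

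\textbf{Step 1: Monotonicity and far-field limit.} First I would establish $\partial_n u > 0$ in $\Omega$ by the moving plane method in the $e_n$ direction. The global Lipschitz condition on $\varphi$ supplies a uniform exterior cone condition, which is enough to initiate the procedure even when $\varphi$ is unbounded; the strong maximum principle applied to $\partial_n u$ then upgrades weak monotonicity to strict. Using this, define $U(x'):=\lim_{x_n\to\infty}u(x',x_n)$, which by elliptic regularity and monotone convergence solves $\Delta_{x'}U+f(U)=0$ on $\R^{n-1}$. Assumption (W5) and a Liouville-type argument force $U\equiv 1$, giving exponential convergence of $u$ to $1$ away from $\partial\Omega$.

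\textbf{Step 2: Energy bound and blow-down.} The Lipschitz bound on $\varphi$ yields $\mathcal{H}^{n-1}(\partial\Omega\cap B_R)\leq CR^{n-1}$, and combined with the exponential decay from Step 1 this gives the linear energy estimate
\beno
\int_{B_R\cap\Omega}\left(\tfrac12|\nabla u|^2+W(u)\right)dx \leq CR^{n-1}.
\eeno
Rescale $u_R(x):=u(Rx)$ on $\Omega_R:=R^{-1}\Omega$; the rescaled boundary is the graph of $\varphi_R(x'):=R^{-1}\varphi(Rx')$, which inherits the same Lipschitz constant, so by Arzel\`a-Ascoli a subsequence $\varphi_{R_k}$ converges in $C^0_{\mathrm{loc}}$ to some Lipschitz $\varphi_\infty$. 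The Hutchinson-Tonegawa convergence theory for Allen-Cahn together with the uniform energy bound gives $u_{R_k}\to\chi_{E_\infty}$ in $L^1_{\mathrm{loc}}$, with a stationary integral varifold as limit interface. The Dirichlet condition $u=0$ on $\partial\Omega$, combined with the overdetermined Neumann condition (which forces the $1/2$-level set of $u$ to stay at bounded distance from $\partial\Omega$), pins this limit interface to $\mathrm{Graph}(\varphi_\infty)$.

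\textbf{Step 3: Moser and conclusion.} Therefore $\mathrm{Graph}(\varphi_\infty)$ is an entire Lipschitz minimal hypersurface in $\R^n$, and Moser's theorem identifies it as an affine hyperplane; after rotating coordinates we may take $\varphi_\infty\equiv 0$, which is precisely asymptotic flatness of $\varphi$ in the blow-down sense. Combined with the uniform Lipschitz bound this places us in the hypotheses of \cite{BCN}, from which $\Omega$ must be a half-space and $u(x)\equiv g(x\cdot e)$ for some unit vector $e$. The main obstacle I anticipate is in Step 2: justifying that the Allen-Cahn interface in the blow-down coincides with the rescaled boundary $\partial\Omega_\infty$ (rather than drifting into the interior), and that the limit varifold is regular with multiplicity one so that Moser applies. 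The decisive input for both issues is the overdetermined Neumann condition, which gives a scale-invariant lower bound on $|\nabla u|$ near $\partial\Omega$ and thereby locks the interface to the boundary at every scale.
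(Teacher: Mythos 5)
The first two steps of your plan are broadly parallel to the paper's argument: monotonicity $\partial_n u>0$ is supplied by \cite{BCN} in the globally Lipschitz case, the Neumann constant is identified as $\sqrt{2W(0)}$ via the ball-touching argument (Theorem~\ref{reduction to free boundary}), $u$ is upgraded to a local minimizer of \eqref{functional} by the sliding argument, and the blow-down of the rescaled Lipschitz graphs produces a Lipschitz entire minimal graph $\varphi_\infty$, which by Moser's theorem (\cite[Theorem 17.5]{Giusti}) is affine. One caveat in Step~2: you should apply the one-phase Hutchinson--Tonegawa theory from Section~3 of this paper, not the classical Allen--Cahn version, since $u\equiv 0$ on $\Omega^c$ and the functional carries the characteristic function $\chi_{\{u>0\}}$; the conclusion is the same but the framework is different.

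The genuine gap is in Step~3. Knowing that one (subsequential) blow-down $\Omega_\infty$ is a half-space does \emph{not} give the asymptotic flatness at infinity required by \cite{BCN}. Their hypothesis is a uniform condition on $\varphi(x')$ as $|x'|\to\infty$ (roughly, $\varphi(x')=a\cdot x'+o(|x'|)$ uniformly), which is strictly stronger than subsequential $C^0_{\mathrm{loc}}$ convergence of the rescalings $R^{-1}\varphi(R\,\cdot)$ to an affine function. Blow-down limits need not be unique without a quantitative mechanism (a rate in the monotonicity formula, or a \L{}ojasiewicz--Simon type estimate), and even uniqueness of the limit would not by itself convert subsequential convergence into the uniform asymptotic statement. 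This is exactly the gap that the paper's Theorem~\ref{main result 2} (the tilt-excess decay / Savin-type improvement of flatness developed in Sections~2--4) is designed to close: it takes as input only that \emph{one} blow-down is a half-space and delivers directly that $u\equiv g(x\cdot e)$, bypassing BCN's asymptotic-flatness hypothesis entirely. Without invoking Theorem~\ref{main result 2} (or re-deriving an equivalent improvement-of-flatness argument), your proof does not go through.

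Also note a small imprecision in Step~1: you cannot conclude exponential convergence of $u$ to $1$ ``away from $\partial\Omega$'' merely from $U\equiv 1$; in the paper the exponential decay comes from hypothesis (W3) and is localized to regions where $u$ is already uniformly close to $1$ (Lemma~\ref{lem 3.2}), which is then combined with the non-degeneracy of $u$ near the free boundary to get the linear energy bound (Proposition~\ref{energy growth}). The result is the same but the derivation is not as direct as stated.
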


\begin{thm}
\label{main result 5} Assume that  $\varphi$ is coercive, i.e.
\begin{equation}
\label{coercive}
\lim_{x^\prime\to\infty}\varphi(x^\prime)=+\infty.
\end{equation}
Then there is no solution to Serrin's overdetermined problem
\eqref{equation 0} in $ \Omega=\{ x_n >\varphi (x^{'}) \}$.
\end{thm}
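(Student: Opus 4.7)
The plan is to derive a contradiction by combining a Pohozaev-type integral identity with Modica's pointwise gradient estimate, exploiting the coercivity of $\varphi$ both to make integrals over bounded horizontal sub-level sets finite and to generate a useful blow-up sequence along $\partial\Omega$ at infinity.

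The key geometric input is that coercivity of $\varphi$ makes each sub-level set $\Sigma_\lambda := \Omega \cap \{x_n < \lambda\}$ bounded, with horizontal section $\{x' : \varphi(x') < \lambda\}$ of finite measure $A(\lambda)$. Multiplying the equation $\Delta u + f(u) = 0$ by $\partial_n u$ and integrating over $\Sigma_\lambda$, and using the overdetermined data $u = 0$, $|\nabla u| = c$ on $\partial\Omega$ to simplify the boundary flux, first I would obtain
\[
\int_{\Omega \cap \{x_n = \lambda\}} \Bigl[\tfrac{1}{2}(\partial_n u)^2 - \tfrac{1}{2}|\nabla_{x'} u|^2 - W(u)\Bigr]\,dx' \;=\; \Bigl(\tfrac{c^2}{2} - W(0)\Bigr)\,A(\lambda).
\]
Second, I would establish Modica's inequality $|\nabla u|^2 \leq 2W(u)$ in $\Omega$ via the Bernstein-type identity $\Delta P = 2(|\nabla^2 u|^2 - (\Delta u)^2)$ for $P := |\nabla u|^2 - 2W(u)$, combined with the maximum principle and control of $P$ on $\partial\Omega$. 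Third, a blow-up analysis along a sequence of boundary points of $\partial\Omega$ escaping to infinity (which coercivity guarantees to exist) identifies the rescaled limit with the unique 1D layer profile $g$ on a half-space, pinning down the overdetermined constant $c = g'(0) = \sqrt{2W(0)}$, that is $c^2 = 2W(0)$.

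Once $c^2 = 2W(0)$, the right-hand side of the integral identity is identically zero, while Modica's inequality rewritten as $\tfrac{1}{2}(\partial_n u)^2 - W(u) \leq -\tfrac{1}{2}|\nabla_{x'} u|^2$ bounds the integrand above by $-|\nabla_{x'} u|^2 \leq 0$. Comparing, one concludes $\int |\nabla_{x'} u|^2 dx' = 0$ on every horizontal slice, so $u = u(x_n)$ depends on the last coordinate only. But then $\partial\Omega = \{u = 0\}$ must be a horizontal hyperplane $\{x_n = c_0\}$, which reads $\varphi \equiv c_0$ and directly contradicts $\varphi(x') \to +\infty$. The main obstacle is pinning down the equality $c^2 = 2W(0)$: Modica gives the upper bound readily, but the matching lower bound requires the boundary blow-up at infinity to be carried out carefully, since for a coercive non-Lipschitz $\varphi$ the boundary may have unbounded curvature at infinity and a judicious rescaling is needed to extract a tangent half-space limit. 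Once this single point is secured, the rest of the proof collapses into the clean chain of pointwise and integral inequalities described above.
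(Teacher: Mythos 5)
Your strategy is genuinely different from the paper's, and in fact potentially more economical for this particular theorem. The paper establishes $|\nabla u|=\sqrt{2W(0)}$ on $\partial\Omega$ (Lemmas \ref{upper}--\ref{lower}, via sliding large balls from inside and outside and comparing with radial barriers), then uses the monotonicity result of \cite{BCN} for coercive epigraphs to prove that $u$ is a local minimizer (Lemma 5.7), and finally runs the full blow-down/$\Gamma$-convergence machinery of Sections 2--4 to produce a set $\Omega_\infty$ of minimal perimeter, whose structure contradicts coercivity. You bypass the entire minimality/blow-down apparatus by integrating $(\Delta u + f(u))\,\partial_n u = 0$ over the horizontal sub-level sets $\Sigma_\lambda = \Omega\cap\{x_n<\lambda\}$ (which are bounded precisely because $\varphi$ is coercive) to obtain
\[
\int_{\Omega\cap\{x_n=\lambda\}}\Bigl[\tfrac{1}{2}(\partial_n u)^2 - \tfrac{1}{2}|\nabla_{x'}u|^2 - W(u)\Bigr]\,dx' = \Bigl(\tfrac{c^2}{2}-W(0)\Bigr)A(\lambda),
\]
which I have verified is correct, and you combine it with the Modica inequality. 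Once $c^2=2W(0)$ and Modica hold, the integrand is bounded above by $-|\nabla_{x'}u|^2$ while the right-hand side vanishes, forcing $u=u(x_n)$ and hence $\varphi\equiv\text{const}$, a clean contradiction with \eqref{coercive}. This is an elegant exploitation of coercivity that the paper does not use, and it avoids Bernstein theorems, $\Gamma$-convergence, and the dimensional classification of minimal cones entirely.

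There are, however, two places where your route as sketched has gaps. First, the identification $c=\sqrt{2W(0)}$: you propose a blow-up along boundary points escaping to infinity, acknowledging yourself that ``for a coercive non-Lipschitz $\varphi$ the boundary may have unbounded curvature at infinity and a judicious rescaling is needed.'' This is the weak link. For instance $\varphi(x')=e^{|x'|}$ is coercive, yet the second fundamental form of the graph does not decay at infinity, so there is no obvious tangent-half-space limit and no obvious ``judicious rescaling'' (rescaling would also rescale $|\nabla u|$ and destroy the fixed constant $c$). The paper's Lemmas \ref{upper} and \ref{lower} sidestep this entirely: since $\Omega$ is an epigraph, both $\Omega$ and $\Omega^c$ can be touched at $\partial\Omega$ by arbitrarily large balls, and comparison with the radial solutions $v^R$ in $B_R$ (respectively a super-solution in an annulus) pins down $c$ from both sides, with no flattening of $\partial\Omega$ required. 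You should replace your blow-up step with this comparison argument. Note also that $c\le\sqrt{2W(0)}$ is not obtainable ``readily'' from Modica as you claim, because the derivation of Modica itself needs the free boundary condition $|\nabla u|=\sqrt{2W(0)}$ on $\partial\Omega$ as an input (to control $P=|\nabla u|^2-2W(u)$ on $\partial\Omega$ and to invoke the boundary gradient estimate in the compactness argument); the logical order in the paper is $c=\sqrt{2W(0)}$ first, Modica second, and your write-up should respect this. Second, the Modica inequality as proved in the paper (Proposition \ref{Modica inequality}) is stated and proved for local minimizers, and its proof cites the boundary gradient estimate of Alt--Caffarelli, which is a minimizer estimate. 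If you want Modica for a general solution of the overdetermined problem without first establishing minimality (which the paper obtains from the monotonicity given by \cite{BCN}), you need to argue that the relevant boundary estimate holds for viscosity/classical solutions of the one-phase free boundary problem and not only minimizers; this is plausible but requires an explicit argument that ``combined with the maximum principle and control of $P$ on $\partial\Omega$'' does not supply. If instead you import the monotonicity of \cite{BCN} as the paper does, you recover minimality and can use Proposition \ref{Modica inequality} as stated, at which point your proof closes.
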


The last result proves the conjecture in dimensions $n\leq 8$, under an additional assumption.

\begin{thm}\label{main result}
Let $u$ be a solution of \eqref{equation 0} satisfying the following monotonicity assumption in one direction
\begin{equation}\label{monotinicity in one direction}
\frac{\partial u}{\partial x_n}>0,\quad \mbox{in}\ \Omega.
\end{equation}
If $n\leq 8$  and $0\in\partial\Omega$, then $u(x)\equiv g(x\cdot
e)$ for some unit vector $e$ and $\Omega$ is an half space.
\end{thm}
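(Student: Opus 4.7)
The plan is to reduce Theorem~\ref{main result} to the globally Lipschitz case covered by Theorem~\ref{main result 3}, by exploiting the monotonicity assumption together with the dimensional bound $n\leq 8$ to force $\varphi$ to be globally Lipschitz. The first input is stability: the positive function $v:=\partial_{x_n}u$ solves the linearized equation $\Delta v+f'(u)v=0$, and the Serrin condition gives $\nabla u=-|\nabla u|\,\nu$, so $\partial_{x_n}u=-c\,\nu_n>0$ up to $\partial\Omega$. A standard test-function manipulation (\`a la Berestycki--Caffarelli--Nirenberg, Ambrosio--Cabr\'e) then produces the stability inequality
\[
\int_\Omega\bigl(|\nabla\xi|^2-f'(u)\xi^2\bigr)\,dx\;\geq\;0\qquad\forall\,\xi\in C_c^\infty(\Omega).
\]
Combined with the maximum principle and (W1)--(W5), which pin $0<u<1$, elliptic regularity (and the boundary smoothness afforded by the Serrin condition) yields uniform $C^{2,\alpha}$ bounds on $u$ up to $\partial\Omega$ and local $C^{1,\alpha}$ control on $\varphi$.

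The second input is a blow-down to a minimal surface. Rescale $u_R(x):=u(Rx)$ on $\Omega_R:=R^{-1}\Omega$ (using $0\in\partial\Omega$, so each $\Omega_R$ still contains the origin on its boundary). By the Modica--Mortola $\Gamma$-convergence theory for the Allen--Cahn energy, along a sequence $R_k\to\infty$ the sublevel sets $\{u_{R_k}\leq\gamma\}$ converge in $L^1_{\rm loc}$ to a set $E\subset\Omega_\infty:=\lim\Omega_{R_k}$ whose boundary is a \emph{stable} area-minimizing hypersurface, stability being transferred through the limit by the Sternberg--Zumbrun argument applied to the stability inequality of Step~1. The monotonicity $\partial_{x_n}u_{R_k}>0$ passes to the limit and forces $\partial E$ to be a graph over $\R^{n-1}$. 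By the Bombieri--De Giorgi--Giusti Bernstein theorem, every minimal graph in ambient dimension $n\leq 8$ is affine, so $\partial E$ is a hyperplane and $\Omega_\infty$ is a half-space. This is the only place the restriction $n\leq 8$ is used, matching the counterexample of \cite{DPW} in dimension $9$ which is built on a nonflat Bombieri--De Giorgi--Giusti graph.

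Finally one must upgrade the qualitative information ``every blow-down of $\varphi$ is affine'' into the quantitative statement ``$\varphi$ is globally Lipschitz'', at which point Theorem~\ref{main result 3} closes the argument. The monotonicity of $u$ in $x_n$ prevents $\varphi$ from oscillating on smaller scales, and a sliding argument applied to shifted copies of $u$ inside the epigraph converts the uniform affine blow-down into a uniform global Lipschitz estimate on $\varphi$. I expect this last step---bridging the qualitative blow-down limit to a quantitative global estimate on the free boundary, while simultaneously carrying monotonicity, stability and the graph property to the blow-down limit so one lands squarely in the hypothesis of the BDG Bernstein theorem---to be the main technical obstacle.
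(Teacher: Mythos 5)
The paper's proof hinges on a point you have missed: the monotonicity hypothesis \eqref{monotinicity in one direction} is used to show that $u$ is a \emph{local minimizer} of the free-boundary energy \eqref{functional}, not merely that $u$ is stable. This is done by a sliding argument: if $v$ minimizes the energy on a ball $B_R(x_0)$ with boundary data $u$, one slides $u^t(x):=u(x+te_n)$ down from $t=+\infty$ (where $u^t>v$ by Lemma~\ref{lem 5.1}) and up from $t=-\infty$ (where $u^t\equiv 0$), and concludes by the strong maximum principle and Hopf lemma that $u\equiv v$. Your substitution of stability for minimality is a real downgrade and creates a gap: $\Gamma$-convergence to a set of \emph{minimal perimeter} is a theorem about energy minimizers, not about stable critical points; from stability alone, the Hutchinson--Tonegawa theory produces only a stationary integral varifold, and passing stability through is delicate and still would not yield minimizing or graphical structure. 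In particular, your phrase ``stable area-minimizing hypersurface'' conflates two different conclusions obtained from two different hypotheses.

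A second gap is the claim that monotonicity forces the blow-down boundary $\partial E$ to be a graph over $\mathbb{R}^{n-1}$, so that the Bombieri--De Giorgi--Giusti Bernstein theorem for minimal \emph{graphs} applies. With $\varphi$ a priori non-Lipschitz, the transition layer can steepen under rescaling and the $L^1_{\mathrm{loc}}$ limit need not be graphical; the paper sidesteps this entirely by invoking the dimension-reduction theorem of Savin \cite[Theorem~2.4]{Savin} (ultimately Simons' theorem for area-minimizing cones), which applies to minimizers without any graph hypothesis and gives directly that $\Omega_\infty$ is a half-space when $n\leq 8$. Finally, the paper does \emph{not} route through Theorem~\ref{main result 3} (the globally Lipschitz case); once the blow-down is known to be flat, it applies Theorem~\ref{main result 2}, the tilt-excess improvement-of-flatness machinery of Section~4, to conclude $u(x)\equiv g(x\cdot e)$. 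The step you flag as ``the main technical obstacle''---upgrading a qualitative blow-down to a quantitative global Lipschitz bound on $\varphi$---is precisely what the excess-decay iteration accomplishes, so attempting to prove it separately and then quote Theorem~\ref{main result 3} is both circular in spirit and harder than the direct path. Before the sliding and the blow-down, the paper also needs Theorem~\ref{reduction to free boundary} (that $|\nabla u|=\sqrt{2W(0)}$ on $\partial\Omega$), obtained by comparison on large interior and exterior tangent balls; this step, which turns the overdetermined problem into the one-phase free boundary problem \eqref{equation}, does not appear in your outline.
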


We compare the results of this paper with those in the existing literature. Theorem \ref{main result 4} was proved by Farina and Valdinoci \cite{F1} under the assumption that the epigraph is globally Lipschitz (and for more general $f$).  They also proved Theorem \ref{main result 3} under the dimension restriction $n=2$ or $3$.  In view of the counterexample constructed by del Pino, Pacard and the second author \cite{DPW}, the dimension restriction in Theorem \ref{main result} is optimal. (We remark that the solutions constructed  in \cite{DPW}  also satisfy (\ref{monotinicity in one direction}).)

The extra condition (\ref{monotinicity in one direction}) in Theorem \ref{main result} is a natural one. See \cite{F1, F3}. This condition is always satisfied if the epigraph is globally Lipschitz  or coercive (\cite{BCN}). It seems that the monotonicity condition \eqref{monotinicity in one
direction} should follow from our other assumptions. However, this
is not clear at present.
It will be an interesting question to remove or prove this condition in general setting.

Theorems \ref{main result 4}-\ref{main result} have analogues in De Giorgi Conjecture for Allen-Cahn equation
\begin{equation}
\label{AC}
\Delta u + u-u^3=0 \  \ \ \mbox{in} \ \R^n
\end{equation}
which asserts that the only solution which is monotone in one direction must be one-dimensional.  Caffarelli-Cordoba  \cite{CC} proved the one-dimensional symmetry result under the assumption that
the level set is globally Lipschitz. (This corresponds to Theorem \ref{main result 3}.) De Giorgi's conjecture has been proven to be true for $n=2$ by Ghoussoub and Gui in \cite{gg}, for $n=3$ by Ambrosio and Cabre in \cite{AmC} and for $4\leq n\leq 8$ by Savin in \cite{Savin}, under the additional assumption that
$$
\lim_{x_{n}\to \pm \infty}u(x',x_{n} )=\pm 1.
$$
This conjecture was proven to be false for $n\geq 9$ by del Pino, Kowalczyk and Wei in \cite{dkwdg}. (Another proof of Savin's theorem is recently given by the first author \cite{Wang}.)

Now we explain the main ideas of our proof. The key observation is that under some conditions  (i.e. the monotonicity condition (\ref{monotinicity in one direction})) we shall prove that solutions to Serrin's overdetermined problem (\ref{equation 0}) are
 minimizers of the
functional
\begin{equation}\label{functional}
\int \frac{1}{2}|\nabla u|^2+W(u)\chi_{\{u>0\}}.
\end{equation}
(Here we only need $W$ to satisfy hypothesis (W1-4).) The Euler-Lagrange equation  corresponding to (\ref{functional}) ((\ref{equation}) below) is a one phase free boundary problem in which $ |\nabla u| = \sqrt{2W(0)}$ on the boundary.  To this end, we first establish
\begin{thm}\label{reduction to free boundary}
Let $u$ be a solution of \eqref{equation 0}, where $W$ satisfies
(W1-5). Then $|\nabla u|=\sqrt{2W(0)}$ on $\partial\Omega$.
\end{thm}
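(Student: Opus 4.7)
\noindent\textbf{Proof plan for Theorem~\ref{reduction to free boundary}.} The aim is to establish $|\nabla u|^2 = 2W(0)$ pointwise on $\partial\Omega$. I would prove the two inequalities separately, with the upper bound following from Modica's gradient estimate and the lower bound being the technical core.

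\emph{Step 1: Preliminary estimates.} Using (W4), I first show $0 \le u \le 1$ in $\Omega$ by a Keller--Osserman-style comparison: the superlinear growth of $W'$ for $u>1$ rules out $u$ entering the region $\{u>1\}$ by applying a barrier on any tangent ball. Standard interior and boundary Schauder estimates then give uniform bounds on $|\nabla u|$ and $|D^{2}u|$ up to $\partial\Omega$.

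\emph{Step 2: Upper bound.} Apply Modica's gradient estimate to the bounded solution $u$ of $\Delta u = W'(u)$ with $W\ge 0$. Setting $P := |\nabla u|^2 - 2W(u)$, the Bochner-type identity $\Delta P = 2|D^{2}u|^2 - 2(W'(u))^2$, refined at points where $\nabla u\ne 0$ via the splitting along and transverse to $\nabla u$, gives an elliptic differential inequality for $P$ admitting a maximum principle. Together with the boundedness of $u$ this yields $P\le 0$ throughout $\Omega$. Evaluating on $\partial\Omega$, where $u=0$, gives $c^{2}:=|\nabla u|^{2}|_{\partial\Omega} \le 2W(0)$.

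\emph{Step 3: Lower bound.} The direction $c^{2}\ge 2W(0)$ is more delicate. Argue by contradiction: suppose $c < g'(0)=\sqrt{2W(0)}$. I would compare $u$ with the one-dimensional profile $g$ through its distance-function lift $v(x) := g(d(x,\partial\Omega))$, defined in a tubular neighborhood of $\partial\Omega$. Then $v=0=u$ on $\partial\Omega$ and $|\nabla v|=g'(0)>c=|\nabla u|$ there, so Taylor expansion along the inward normal gives $v>u$ in a thin tubular neighborhood.  To close the argument into a contradiction I would either
\begin{enumerate}
\item[(a)] slide $v$ inward and look at a first touching point, using (W5) to guarantee nondegeneracy of the linearized operator near $u=0$ and invoking the strong maximum principle/Hopf lemma for the difference $u-v$; or
\item[(b)] apply the divergence-free stress--energy tensor $T_{ij}=u_{i}u_{j}-\delta_{ij}\bigl(\tfrac{1}{2}|\nabla u|^{2}+W(u)\bigr)$, integrate $\partial_{j}T_{nj}=0$ over $\Omega\cap\{a<x_{n}<b\}$, and use the overdetermined boundary condition to obtain an identity of the form
\begin{equation*}
\bigl(W(0)-\tfrac{c^{2}}{2}\bigr)\,\bigl|\{x': a<\varphi(x')<b\}\bigr| = \int_{\{x_{n}=b\}\cap\Omega} T_{nn}\,dx' - \int_{\{x_{n}=a\}\cap\Omega} T_{nn}\,dx',
\end{equation*}
then exploit suitable $T_{nn}$ decay at infinity to force $c^{2}=2W(0)$.
\end{enumerate}

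\emph{Main obstacle.} Step~3 is the hard part: Modica is intrinsically a one-sided bound, so the matching lower bound must come from the specific structure of the overdetermined problem together with (W5). In approach (a), the curvature of $\partial\Omega$ makes $g\circ d$ fail to be an exact sub/supersolution, so the touching-point argument must tolerate a correction term; in approach (b), the required decay of $T_{nn}$ at infinity (which would be immediate if one knew $u\to 1$ uniformly) must be extracted from the equation and the epigraph geometry without an a priori monotonicity assumption.
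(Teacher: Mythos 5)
Your Step~2 contains a genuine gap. The classical Modica gradient estimate ($P := \tfrac12|\nabla u|^2 - W(u) \le 0$) is proved for entire solutions on $\R^n$ or on manifolds without boundary; the argument (translate to a maximum, normalize, conclude one-dimensionality) breaks when $P$ may attain its supremum on $\partial\Omega$, where its value is exactly $\tfrac{c^2}{2} - W(0)$ --- precisely the quantity you want to bound. The paper does prove a Modica inequality (Proposition~\ref{Modica inequality}), but only for minimizers of the free boundary functional \eqref{functional}: the boundary case in that proof invokes the Alt--Caffarelli gradient estimate, which already uses $|\nabla u| = \sqrt{2W(0)}$ on $\partial\Omega$. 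So using Modica to deduce the boundary gradient is circular at this stage of the argument. The paper instead proves the upper bound (Lemma~\ref{lower}) by touching $\partial\Omega$ from outside by a ball $B_R\subset\Omega^c$ of arbitrarily large radius, constructing (via sub- and supersolutions) a function $u^R\ge u$ on the annulus $B_{2R}\setminus B_R$ with $u^R=0$ on $\partial B_R$ and $u^R=1$ on $\partial B_{2R}$, then applying the Hopf lemma at the tangency point and sending $R\to\infty$.

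For Step~3, both of your proposed routes face the obstacles you yourself identify, and these obstacles do not go away: $g\circ d$ is not an exact sub/supersolution (indeed $\Delta(g\circ d) - W'(g\circ d) = g'(d)\Delta d$, so the curvature term has no reason to vanish or have a helpful sign without first knowing Modica), and the Pohozaev/stress--energy route requires decay at infinity that is not a priori available. The paper's key observation --- stated explicitly after Theorem~\ref{reduction to free boundary} --- is that since $\Omega$ is an epigraph, $\partial\Omega$ can be touched from \emph{inside} by arbitrarily large balls $B_R$ as well. Comparing $u$ against the exact radial zero-Dirichlet solution $v^R$ in such a ball (with a sliding argument and \cite[Lemma 3.1]{BCN} giving $u>v^R$), the Hopf lemma at the tangent point yields $|\nabla u|\ge|\nabla v^R|$ there; as $R\to\infty$ the boundary normal derivative of $v^R$ converges to $\sqrt{2W(0)}$ because $v^R(Re_n+\cdot)$ converges to the one-dimensional half-space profile. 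Using exact ball solutions rather than $g\circ d$ eliminates the curvature error, and using the epigraph geometry eliminates the need for any global decay estimate.
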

This is mainly because $\{u>0\}$ is an epigraph, we can touch
$\partial\{u>0\}$ by arbitrarily large balls from both sides. Then
we construct suitable comparisons in these balls to determine
$|\nabla u|\lfloor_{\partial\Omega}$. (Similar idea has been used in \cite{BCN}.) Theorem \ref{reduction to free boundary} does not hold for other unbounded domains. See examples of Delaunay type domains in \cite{DPW}.

\medskip

With hypothesis (W5) and the monotonicity condition
\eqref{monotinicity in one direction}, we further show that a
solution to \eqref{equation 0} is necessarily a minimizer of
\eqref{functional}.

 Hence the proof of
Theorem \ref{main result} is reduced to the study of solutions to
the following one phase free boundary problem:
\begin{equation}\label{equation}
 \left\{\begin{aligned}
&\Delta u=W^\prime(u),\ \ \ \mbox{in}\ \Omega=\{u>0\},\\
&u>0,\ \ \ \mbox{in}\ \Omega,\\
&u=0,\ \ \ \mbox{on}\ \partial\Omega,\\
&|\nabla u|=\sqrt{2W(0)},~~\mbox{on}~\partial\Omega.
                          \end{aligned} \right .
\end{equation}
In the general case, a solution $u$ to this equation is a stationary
critical points of \eqref{functional}.

For this one phase free boundary problem, we have
\begin{thm}\label{main result 2}
Let $u$ be a minimizer of \eqref{functional} in $\R^n$ with
$0\in\partial\Omega$. If one of the blowing down limit of $u$ is an
half space, then $u(x)\equiv g(x\cdot e)$ for some unit vector $e$.
\end{thm}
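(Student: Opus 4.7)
The strategy is to combine a blow-down analysis with an improvement-of-flatness argument for this one-phase free boundary problem. First I would set up the blow-down: let $u_R(x):=u(Rx)$ and $\Omega_R:=R^{-1}\Omega$. By scaling, $u_R$ is a minimizer on $B_1$ of the Modica--Mortola type energy
\[
\int_{B_1}\Big(\tfrac{\epsilon}{2}|\nabla v|^2 + \tfrac{1}{\epsilon}W(v)\chi_{\{v>0\}}\Big)\,dy
\]
with $\epsilon=1/R$, adapted to the one-sided constraint $v\ge 0$. By $\Gamma$-convergence and standard compactness, along any sequence $R_k\to\infty$ one extracts a subsequence with $\chi_{\Omega_{R_k}}\to\chi_E$ in $L^1_{\mathrm{loc}}(\R^n)$, where $E$ is a Caccioppoli set and $\partial^\ast E$ is a local perimeter minimizer. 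Density and non-degeneracy estimates for minimizers of \eqref{functional} upgrade this to Hausdorff convergence of the free boundaries $\partial\Omega_{R_k}\to\partial E$. By hypothesis, one such $E$ is a half-space $H$.

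Next I would use a Weiss-type monotonicity formula for the rescaled density
\[
\Theta(R):=R^{1-n}\int_{B_R}\Big(\tfrac{1}{2}|\nabla u|^2+W(u)\chi_{\{u>0\}}\Big)
\]
to show that the blow-down is unique and equal to $H$. Monotonicity of $\Theta$ gives the existence of a limit as $R\to\infty$; the half-space blow-down realizes the minimal possible value (the surface tension of a flat interface), forcing every blow-down to achieve the same density and hence be a hyperplane, and a standard connectedness/orientation argument identifies them all with the same $H$.

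The key step is improvement of flatness: asymptotic flatness of $\partial\Omega$ at infinity must be propagated to flatness at every scale. I would adapt Savin's improvement-of-flatness method for Allen--Cahn \cite{Savin}, combined with De Silva-type flatness regularity for one-phase Bernoulli problems, to our functional. Crucially, because the hypothesis directly supplies a hyperplane blow-down, no dimension restriction is needed: one never has to rule out non-planar minimal cones (this is precisely what causes the restriction $n\le 8$ in Theorem \ref{main result}). Once $\partial\Omega$ is known to be a hyperplane and $\Omega=\{x\cdot e>0\}$ after a rotation, translation invariance along $\partial\Omega$ reduces \eqref{equation} to the ODE $v''=W'(v)$ on $(0,\infty)$ with $v(0)=0$ and $\lim_{t\to\infty}v(t)=1$. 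By \eqref{1d profile} and \eqref{first integral} this has the unique solution $v=g$, so $u(x)=g(x\cdot e)$.

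The hardest step is the improvement-of-flatness argument: one must establish a boundary Harnack inequality, linearize around the one-dimensional profile $g(x\cdot e)$, and iterate to obtain quantitative flatness estimates at every scale. These estimates must handle simultaneously the smooth nonlinearity $W'(u)$ in the interior and the Bernoulli condition $|\nabla u|=\sqrt{2W(0)}$ on the free boundary, a blend that is genuinely more intricate than either the pure Allen--Cahn or the pure Bernoulli setting taken in isolation.
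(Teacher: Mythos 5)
Your proposal captures the broad architecture of the paper's argument — blow-down analysis, identification of the limit with a perimeter minimizer, and an improvement-of-flatness scheme in the spirit of Savin — and your observation that the hypothesis of a half-space blow-down is precisely what replaces the dimension restriction of Theorem~\ref{main result} is exactly right. However, the proposal has two genuine gaps at the points where the real work lies.

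First, your uniqueness-of-blow-down argument is a heuristic, not a proof. You invoke a Weiss-type monotonicity formula and appeal to the blow-down achieving the minimal density, then to a "connectedness/orientation argument." But density rigidity only tells you that \emph{every} blow-down is a hyperplane; it does not control whether the normal direction can rotate as $R\to\infty$. The paper closes this gap quantitatively: iterating the tilt-excess decay Theorem~\ref{thm tilt excess decay} on dyadic scales $B_{\theta^{-i}}$, and crucially using the companion estimate \eqref{control on the direction} on the drift of the approximating hyperplanes, yields the fixed-direction estimate
\[
\int_{\mathcal{B}_R(x)}\bigl[1-(\nu\cdot e_\infty)^2\bigr]|\nabla u|^2 \le C(n)R^{n-2},
\]
which is one power of $R$ better than the trivial $O(R^{n-1})$ energy bound and pins down a single $e_\infty$. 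Without such a summable control on the directions, "uniqueness" does not follow.

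Second, and more importantly, you conflate "improvement of flatness at every scale" with "$\partial\Omega$ is exactly a hyperplane" and then jump to translation invariance and the ODE. Iterating improvement of flatness gives $C^{1,\alpha}$ regularity and excess decay at unit scale around every boundary point, but it does not by itself force $\partial\Omega$ to be globally flat, nor does it force $u$ to be translation invariant along $\partial\Omega$. The paper handles the final step differently: it introduces the distance-type function $\Psi = g^{-1}\circ u$, which satisfies an eikonal-type equation upon blow-down; by the vanishing viscosity method and the excess estimate \eqref{4.1}, $\Psi_\varepsilon = \varepsilon\Psi(\varepsilon^{-1}\cdot)$ converges in $C^1_{\mathrm{loc}}$ of $\{\Psi_\infty>0\}$ to $x_n^+$ after rotation, giving a uniform lower bound on $\partial_n u$ in the region where $u$ is close to $1$. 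Only then can a sliding method be run to deduce one-dimensional symmetry. This is a genuine additional mechanism — not a formality — and it is what converts asymptotic flatness into exact rigidity. Your proposal's appeal to a "boundary Harnack inequality and linearization" in the De Silva style is a plausible alternative technical route for the improvement-of-flatness lemma itself, but it still leaves this final rigidity step unaddressed.
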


This one phase free boundary problem bears many similarities with
the Allen-Cahn equation. Hence previous methods used to prove De
Giorgi conjecture for Allen-Cahn equations (cf. Savin \cite{Savin})
can be employed to study the one dimensional symmetry of solutions
to \eqref{equation}. In this paper, we shall follow the first author's approach
in \cite{Wang}, which uses an energy type quantity, the {\em
excess}. To this aim, we also present the Huichinson-Tonegawa's
theory for the convergence of general stationary critical points,
see Section 3.

\medskip

Finally we discuss other related progress made at the BCN conjecture. The conjecture, in the case of cylindrical domains, was disproved by Sicbaldi in \cite{Sic}, where he provided a counterexample in the case when $n\ge 3$ and $f(t)=\lambda t$, $\lambda >0$  by constructing a periodic perturbation of the cylinder $B^{n-1}\times R$ which supports a bounded solution to (\ref{equation 0}). In the two-dimensional case,  Hauswirth,  H\`elein and Pacard in \cite{pacard}  provided a counterexample in a strip-like domain for the case $f=0$. Explicitly, Serrin's overdetermined problem  is found to be solvable in the domain
$$ \Omega = \{ x\in \R^2 \ /\ |x_2| < \frac \pi 2 + \cosh (x_1) \},$$
where the solution found is unbounded.
Necessary geometric and topological conditions on $\Omega$ for solvability in the two-dimensional case have been found by Ros and Sicbaldi in \cite{ros}. The overdetermined problem in Riemannian manifolds has been considered by  Farina,  Mari and  Valdinoci in
\cite{F3}.

\medskip

This paper is organized as follows. In Section 2 we collect some
basic facts about the one phase free boundary problem, such as
Modica inequality and monotonicity formula. In Section 3 we present
 the Huichinson-Tonegawa
theory for the convergence of general stationary critical points.
Section 4 is devoted to prove Theorem \ref{main result 2}, following
\cite{Wang}. Most of these arguments are suitable adaption of
previous ones and we only state the results without proof. Only for
the integer multiplicity of the limit varifold in the
Huichinson-Tonegawa theory (Theorem \ref{quantization}), a new proof
is given, which follows the line introduced in Lin-Rivi\`{e}re
\cite{LR} and we think simplifies the existing methods. Section 5 is
devoted to proving that Theorem \ref{main result} can be reduced to
Theorem \ref{main result 2}.

\section{One phase free boundary problem}
\setcounter{equation}{0}

From this section to Section 4, $u$ always denotes a local minimizer
of \eqref{functional} in $\R^n$. We also assume that $u$ is
nontrivial and $0\in\partial\{u>0\}$.

We can show that $0\leq u\leq 1$ (see Proposition \ref{prop 2.1}
below) and it is Lipschitz continuous in $\R^n$ (see \cite{AC} and
\cite{CJK}). Hence $\{u>0\}$ is an open set, which we denote by
$\Omega$. Furthermore, by the partial regularity for free boundaries
in \cite{AC} and \cite{CJK}, $\partial\Omega$ is a smooth
hypersurface except a set of Hausdorff dimension at most $n-3$. The
last condition in \eqref{equation} is understood in the weak sense,
see \cite{AC}. At the smooth part of $\partial\Omega$, it also holds
in the classical sense.
%After a scaling, we may assume $|\nabla u|=1$ on $\partial\Omega$.

\begin{prop}\label{prop 2.1}
$u\leq 1$ on $\R^n$.
\end{prop}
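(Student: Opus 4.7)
The plan is to prove this via a Keller--Osserman type barrier argument applied to the positive part $w := (u-1)^+$.

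First, I would establish that $w$ is a distributional subsolution of $\Delta w \geq c w^p$ on $\R^n$, where $p > 1$ and $c > 0$ come from assumption (W4). On the open set $\{u > 1\}$ this is immediate from the Euler--Lagrange equation: $\Delta w = \Delta u = W'(u) \geq c(u-1)^p = cw^p$. On $\{u \leq 1\}$ the function $w$ vanishes, so the inequality is trivial. The only delicate point is the behavior across the level set $\{u = 1\}$: testing against $\phi \in C_c^\infty(\R^n)$ with $\phi \geq 0$ and integrating $\int w\,\Delta\phi$ by parts on $\{u > 1\}$ produces a boundary contribution $-\int_{\{u=1\}}\phi\,\partial_\nu u\,d\mathcal{H}^{n-1}$, where $\nu$ is the outward unit normal of $\{u > 1\}$. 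Since $u$ increases into $\{u > 1\}$, $\partial_\nu u \leq 0$ on this level set, so this contribution is nonnegative and the distributional inequality is preserved.

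Next, I would compare $w$ on each ball $B_R$ against the standard Keller--Osserman barrier
\[
V_R(x) := A_R\bigl(R^2 - |x|^2\bigr)^{-2/(p-1)},
\]
where $A_R = C R^{2/(p-1)}$ with $C = C(n,p,c)$ chosen so that a direct computation gives $\Delta V_R \leq c V_R^p$ on $B_R$ (the exponent $2/(p-1)$ is dictated by matching scaling in the dominant term). Because $V_R \to \infty$ as $|x| \to R$ while $w$ is bounded on $\overline{B}_R$ (using the cited Lipschitz bound on $u$ from \cite{AC,CJK}), the supremum of $w - V_R$ over $\overline{B}_R$ is attained at some interior point $x_0$. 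If this supremum were positive, then $V_R(x_0) > 0$ would force $u(x_0) > 1$, so $w$ is smooth near $x_0$ and the classical maximum principle would give $0 \geq \Delta(w - V_R)(x_0) \geq c\bigl(w(x_0)^p - V_R(x_0)^p\bigr) > 0$, a contradiction. Hence $w \leq V_R$ on $B_R$, and evaluating at the origin yields $w(0) \leq A_R R^{-4/(p-1)} = C R^{-2/(p-1)} \to 0$ as $R \to \infty$. Translating the argument to an arbitrary base point yields $w \equiv 0$, i.e., $u \leq 1$ on $\R^n$.

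I expect the main technical step to be the justification of the distributional inequality across $\{u = 1\}$: $w$ is only Lipschitz there, and one must verify that no singular measure of the wrong sign concentrates on the free level set. The monotonicity of $u$ normal to $\{u = 1\}$ is precisely what makes the boundary contribution go the right way. A purely variational alternative would be to truncate $u$ at $1$ and use $u - \eta_R(u-1)^+$ as a competitor of compact perturbation type, but dominating the annular error term appears to require an a priori bound on $u$---essentially the very statement being proved---so the Keller--Osserman route seems cleaner.
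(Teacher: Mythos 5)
Your approach matches the paper's: write $w=(u-1)^+$, show $\Delta w \geq c\,w^p$ in the distributional sense using (W4), and conclude $w\equiv 0$ by a Keller--Osserman comparison. The one place where you diverge is the step across $\{u=1\}$: the paper invokes the Kato inequality $\Delta(u-1)^+ \geq \chi_{\{u>1\}}\Delta u$ (attributing the idea to Brezis), whereas you argue by an integration by parts that produces a boundary term $-\int_{\{u=1\}}\phi\,\partial_\nu u$. That heuristic presumes $\{u=1\}$ is a regular hypersurface, which need not hold; the Kato inequality is precisely the device that makes this step rigorous without any regularity assumption on the level set (and it yields exactly the sign you want, since $\chi_{\{u>1\}}W'(u)\geq c(u-1)_+^p$ by (W4)). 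Your explicit Keller--Osserman barrier $V_R$ is correct and just spells out what the paper compresses into ``the claim follows from the Keller--Osserman theory.''
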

\begin{proof}
Following an idea of Brezis, first by the Kato inequality we can
show that
\[\Delta\left(u-1\right)_+\geq W^\prime(u)\chi_{\{u>1\}}\geq c\left(u-1\right)_+^p.\]
Then the claim follows from the Keller-Osserman theory.
\end{proof}
From this bound and the strong maximum principle, we can further
show that $u<1$ strictly in $\Omega$.

\begin{prop}[Modica inequality]\label{Modica
inequality}
\[\frac{1}{2}|\nabla u|^2\leq W(u),\quad \mbox{in}\ \Omega.\]
\end{prop}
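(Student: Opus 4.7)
The plan is to prove the Modica inequality via the classical maximum principle argument applied to the \emph{$P$-function} $P := \frac{1}{2}|\nabla u|^2 - W(u)$ on $\Omega$. First, I would verify the boundary behavior: on the regular part of $\partial\Omega$, the boundary condition from Theorem \ref{reduction to free boundary} (namely $|\nabla u| = \sqrt{2W(0)}$) together with $u = 0$ forces $P \equiv 0$ on $\partial\Omega_{\mathrm{reg}}$. In the interior of $\Omega$, elliptic regularity (as already quoted in the excerpt) gives $u$ smooth with $\Delta u = W'(u)$ and $0 < u < 1$.

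Next, a direct computation using $\Delta u = W'(u)$ gives
\[
\Delta P = |D^2 u|^2 - W'(u)^2,
\qquad \nabla P = (D^2 u - W'(u) I)\nabla u,
\]
so $D^2 u \cdot \nabla u = \nabla P + W'(u)\nabla u$. The elementary bound $|D^2 u|^2 \geq |D^2 u \cdot \nabla u|^2/|\nabla u|^2$ (valid wherever $\nabla u \neq 0$) then yields
\[
\Delta P \;-\; \frac{2W'(u)}{|\nabla u|^2}\, \nabla u \cdot \nabla P \;\geq\; \frac{|\nabla P|^2}{|\nabla u|^2} \;\geq\; 0
\]
on $\{\nabla u \neq 0\} \cap \Omega$. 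Thus $P$ is a subsolution of a linear elliptic operator with drift there. Combining this with the boundary values $P = 0$ on $\partial\Omega_{\mathrm{reg}}$ and the uniform $L^\infty$ control of $P$ coming from the Lipschitz estimate of \cite{AC, CJK}, an exhaustion of $\Omega$ by $\Omega \cap B_R$ should force $\sup P \leq 0$.

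Two technical obstacles stand in the way. First, the drift coefficient $2W'(u)\nabla u/|\nabla u|^2$ degenerates on the critical set $\{\nabla u = 0\}$, so one cannot naively apply the strong maximum principle across it. The standard workaround, going back to Modica, is to test the inequality against $\Phi(P)$ for a convex $\Phi \geq 0$ vanishing on $(-\infty,0]$; this eliminates the singularity because any interior critical point of $u$ lies in $\{P = -W(u) \leq 0\}$ (using $0<u<1$ and $W\geq 0$), so $\Phi(P)$ and its gradient vanish there. Second, since $\Omega$ is unbounded one needs a Phragm\'en--Lindel\"of style bound on $\Omega \cap \partial B_R$, which follows from the uniform bounds $\|u\|_\infty, \|\nabla u\|_\infty \leq C$ after letting $R \to \infty$. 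I expect the execution of the Modica convex-cutoff trick combined with the free-boundary value $P=0$ on $\partial\Omega_{\mathrm{reg}}$ to be the main substantive step; everything else is routine.
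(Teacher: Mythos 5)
Your $P$-function computation and the boundary identification $P=0$ on the smooth part of the free boundary are both correct, and the convex-cutoff device (testing with $\Phi(P)$) correctly isolates the drift degeneracy on the critical set of $u$. The gap is in the concluding step. Knowing only $\|P\|_{L^\infty}\leq C$ and $P=0$ on $\partial\Omega_{\rm reg}$, the maximum principle on $\Omega\cap B_R$ gives $\sup_{\Omega\cap B_R}P\leq\max\bigl(0,\sup_{\Omega\cap\partial B_R}P\bigr)$, and the right-hand side does not improve as $R\to\infty$; the exhaustion is a tautology, not a Phragm\'en--Lindel\"of theorem. A genuine Phragm\'en--Lindel\"of argument needs either decay of $P$ at infinity or geometric thinness of $\Omega$, and neither is available at this stage --- indeed the relevant geometry of $\{u>0\}$ (e.g.\ the mean convexity recorded in Remark~\ref{rmk 1}) is itself a consequence of the Modica inequality you are trying to prove. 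Bounded subharmonic functions vanishing on the boundary of an unbounded domain need not be nonpositive: $1-|x|^{2-n}$ in $\{|x|>1\}$ is the basic counterexample.

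The paper replaces the exhaustion by a translation--compactness argument. Suppose $\sup_\Omega P=\delta>0$ and choose $x_i$ with $P(x_i)\to\delta$. If $\limsup\,\mathrm{dist}(x_i,\partial\Omega)>0$, the translates $u(\cdot+x_i)$ converge locally to a solution of $\Delta u=W'(u)$ whose $P$-function attains a positive \emph{interior} maximum; this is exactly the situation where your convex-cutoff step (the classical Modica argument of \cite{Modica}) produces a contradiction. If instead $\mathrm{dist}(x_i,\partial\Omega)\to0$, then $u(x_i)\to0$ and hence $\tfrac12|\nabla u(x_i)|^2\to W(0)+\delta>W(0)$, which is ruled out by adapting the one-phase gradient estimate of \cite[Corollary~6.5]{AC}. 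This second case is what your proposal does not address: when the maximizing sequence drifts into the free boundary, the interior $P$-function machinery alone is insufficient and the argument must appeal to free-boundary regularity. (Note also that $x_i$ may approach the singular set of $\partial\Omega$, so one cannot simply pass to the limit in the classical free boundary condition; the gradient estimate of \cite{AC}, which is valid up to singular points, is precisely what covers that.)
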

\begin{proof}
Assume
\[\sup_{\Omega}\left(\frac{1}{2}|\nabla u|^2-W(u)\right)=:\delta>0,\]
and $x_i\in\Omega$ approaches this sup bound.

If $\limsup \mbox{dist}(x_i,\partial\Omega)>0$, we can argue as in
the proof of the usual Modica inequality (e.g. \cite{Modica}) to get
a contradiction.

If $\lim \mbox{dist}(x_i,\partial\Omega)=0$, then $u(x_i)\to0$.
Hence for all $i$ large,
\[\frac{1}{2}|\nabla u(x_i)|^2\geq W(0)+\frac{\delta}{2}.\]
Then we can proceed as in the proof of the gradient estimate for one
phase free boundary problem (cf. \cite[Corollary 6.5]{AC}) to get a
contradiction.
\end{proof}

\begin{rmk}\label{rmk 1}
The Modica inequality implies that $\partial\{u>0\}$ is mean convex
(see for example \cite{CJK}).
\end{rmk}

By considering domain variations, we can deduce the following
stationary condition:
\begin{equation}\label{stationary condition}
\int_{\Omega}\left(\frac{1}{2}|\nabla
u|^2+W(u)\chi_{\{u>0\}}\right)\mbox{div}X-DX(\nabla u,\nabla u)=0.
\end{equation}
As usual, this implies the following Pohozaev identity:
\begin{equation}
\int_{B_R}\frac{n-2}{2}|\nabla
u|^2+nW(u)\chi_{\{u>0\}}=R\int_{\partial B_R}\frac{|\nabla
u|^2}{2}-|u_r|^2+W(u)\chi_{\{u>0\}}.
\end{equation}
Together with the Modica inequality, this gives the following
monotonicity formula.
\begin{prop}[Monotonicity formula]\label{monotonicity formula}
\[E(r;u,x):=r^{1-n}\int_{B_r(x)\cap\Omega}\frac{1}{2}|\nabla u|^2+W(u)\chi_{\{u>0\}}\]
is non-decreasing in $r>0$.

Moreover,
\[\frac{d}{dr}E(r;u,x)=2r^{1-n}\int_{\partial B_r(x)}\Big|\nabla u(y)\cdot\frac{y-x}{|y-x|}\Big|^2
+r^{-n}\int_{B_r(x)}\left[W(u)\chi_\Omega-\frac{|\nabla
u|^2}{2}\right].\]
\end{prop}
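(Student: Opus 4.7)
The plan is to derive the explicit derivative formula for $E(r;u,x)$ by combining the Pohozaev identity stated immediately above with a direct differentiation, and then to read off monotonicity from the Modica inequality of Proposition~\ref{Modica inequality}. By translation invariance I may take $x=0$; writing
\[I(r) := \int_{B_r}\tfrac{1}{2}|\nabla u|^2+W(u)\chi_{\{u>0\}},\]
so that $E(r;u,0)=r^{1-n}I(r)$, the Lipschitz regularity of $u$ together with the coarea formula gives $I'(r)=\int_{\partial B_r}\bigl(\tfrac{1}{2}|\nabla u|^2+W(u)\chi_{\{u>0\}}\bigr)$ for almost every $r>0$. Hence
\[E'(r) \;=\; r^{-n}\bigl[\,rI'(r)-(n-1)I(r)\,\bigr].\]

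Next I would rearrange the Pohozaev identity in the form
\[rI'(r) \;=\; \int_{B_r}\Bigl[\tfrac{n-2}{2}|\nabla u|^2+nW(u)\chi_{\{u>0\}}\Bigr]\;+\;r\int_{\partial B_r}|u_r|^2,\]
where $u_r := \nabla u\cdot y/|y|$. Subtracting $(n-1)I(r)$, the bulk integrand telescopes to $W(u)\chi_{\{u>0\}}-\tfrac{1}{2}|\nabla u|^2$, so dividing by $r^n$ produces the announced derivative formula for $E'(r;u,0)$.

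Monotonicity then follows immediately: the boundary term is manifestly non-negative, and the bulk integrand $W(u)\chi_{\{u>0\}}-\tfrac{|\nabla u|^2}{2}$ is non-negative inside $\Omega$ by the Modica inequality of Proposition~\ref{Modica inequality}, while outside $\Omega$ both quantities vanish a.e. Thus $E'(r)\ge 0$ for a.e.~$r$, and since $r\mapsto E(r)$ is absolutely continuous, it is nondecreasing on $(0,\infty)$.

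The only potentially delicate step is the rigorous use of Pohozaev across the free boundary $\partial\{u>0\}$, where $u$ is merely Lipschitz and not $C^2$. Fortunately this has already been handled: the Pohozaev identity is a direct consequence of the stationary condition~\eqref{stationary condition}, which is derived from a domain-variation argument using only the (local) minimality of $u$, and requires no additional regularity across $\partial\Omega$. I therefore do not expect a genuine obstacle; the proof reduces to a short bookkeeping argument combining three ingredients already at hand, namely coarea, the Pohozaev identity, and the Modica inequality.
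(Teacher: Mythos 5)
Your proof takes exactly the paper's intended route: differentiate $E(r)=r^{1-n}I(r)$, use the Pohozaev identity derived from the domain-variation stationarity condition to rewrite $rI'(r)$, and invoke the Modica inequality for the sign of the bulk term; your bookkeeping is correct. One small remark: your computation yields
\[\frac{d}{dr}E(r;u,x)=r^{1-n}\int_{\partial B_r(x)}\Big|\nabla u\cdot\frac{y-x}{|y-x|}\Big|^2+r^{-n}\int_{B_r(x)}\Big[W(u)\chi_\Omega-\tfrac12|\nabla u|^2\Big],\]
with coefficient $1$ (not $2$) on the boundary term, which is the correct value: the extra factor $2$ in the paper's display appears to be a typographical slip and, since both terms are still nonnegative, it does not affect the monotonicity conclusion.
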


\begin{coro}\label{clearing out 1}
Let $u$ be a non-trivial solution of \eqref{equation}. Then there
exists a constant $c>0$ such that, for any $x\in\partial\Omega$ and
$R>1$,
\[E(R;u,x)\geq cR^{n-1}.\]
\end{coro}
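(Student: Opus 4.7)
The plan is to combine the monotonicity of $E(\cdot;u,x)$ (Proposition \ref{monotonicity formula}) with an Alt--Caffarelli-type non-degeneracy estimate at small scales. Since $r\mapsto E(r;u,x)$ is non-decreasing, I would reduce the problem to exhibiting a universal scale $r_{0}\in(0,1]$ and a constant $c_{0}>0$ with $E(r_{0};u,x)\geq c_{0}$ at every $x\in\partial\Omega$; monotonicity then gives $E(R;u,x)\geq c_{0}$ for every $R\geq r_{0}$, which after unravelling the $r^{1-n}$ factor is exactly the stated lower bound on the un-normalized energy $\int_{B_{R}(x)\cap\Omega}\bigl(\tfrac12|\nabla u|^{2}+W(u)\chi_{\{u>0\}}\bigr)\geq c_{0} R^{n-1}$.

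For the small-scale lower bound I would argue in two steps. First, by Proposition \ref{prop 2.1} and the Modica inequality (Proposition \ref{Modica inequality}), $|\nabla u|^{2}\leq 2W(u)\leq 2\|W\|_{L^{\infty}([0,1])}$ in $\Omega$, so $u$ is globally Lipschitz with a constant depending only on $W$. Because $u(x)=0$ and $\gamma\in(0,1)$ from (W2), I pick $r_{0}\in(0,1]$, depending only on $W$, small enough that $u\leq\gamma$ throughout $B_{r_{0}}(x)\cap\Omega$; by (W1), this forces $W(u)\geq c_{W}:=\min_{[0,\gamma]}W>0$ on that set. Second, I would invoke the Alt--Caffarelli non-degeneracy for local minimizers of \eqref{functional}: there is a constant $c>0$, independent of $x$, such that
\[
\bigl|B_{r_{0}}(x)\cap\Omega\bigr|\ \geq\ c\,r_{0}^{\,n}\qquad\text{for every }x\in\partial\Omega.
\]
The classical inward-comparison argument of \cite{AC} should carry over essentially unchanged: one would replace $u$ by a competitor that is forced to vanish on $B_{r_{0}/2}(x)$ and observe that the saving in the bulk term $W(u)\chi_{\{u>0\}}$ (with $W(0)>0$ playing the role of the Alt--Caffarelli constant) dominates the extra Dirichlet energy unless $\Omega$ already occupies a definite fraction of $B_{r_{0}}(x)$. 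Combining the two estimates yields
\[
\int_{B_{r_{0}}(x)\cap\Omega}\bigl(\tfrac12|\nabla u|^{2}+W(u)\chi_{\{u>0\}}\bigr)\ \geq\ c_{W}\,\bigl|B_{r_{0}}(x)\cap\Omega\bigr|\ \geq\ c_{W}\,c\,r_{0}^{\,n},
\]
so $E(r_{0};u,x)\geq c_{W}\,c\,r_{0}>0$ uniformly in $x$, and the monotonicity formula closes the argument.

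The hard part will be securing the non-degeneracy with a constant that is uniform over \emph{all} $x\in\partial\Omega$, including the $\mathcal H^{n-3}$-negligible singular set on which $\partial\Omega$ need not be smooth. On the regular part of $\partial\Omega$ the inward-comparison argument is classical and pointwise; to reach singular points I would appeal to the upper semicontinuity of the density $\Theta(x):=\lim_{r\downarrow 0}E(r;u,x)$, which is a direct consequence of Proposition \ref{monotonicity formula} and allows the uniform lower bound on the regular set to pass to the closure. Once such a uniform density estimate is in place, the remainder of the proof is just a one-line invocation of monotonicity.
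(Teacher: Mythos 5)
Your proposal is correct and follows essentially the same route as the paper: invoke the Alt--Caffarelli non-degeneracy estimate at the free boundary point $x$ to get a volume lower bound for $\Omega\cap B_{r_0}(x)$, use the Lipschitz bound (here via Modica) together with $u(x)=0$ to bound $W(u)$ below near $x$, multiply to get $E(r_0;u,x)\geq c_0$, and let the monotonicity formula propagate this to every $R>1$. The only thing worth correcting is the worry in your final paragraph: the Alt--Caffarelli density estimate $|\Omega\cap B_r(x)|\geq c\,r^n$ is proved by a pure comparison (cut-and-paste) argument valid at \emph{every} free boundary point with a universal constant, and does not distinguish regular from singular points, so no appeal to density of the regular set or upper semicontinuity of $\Theta$ is needed.
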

\begin{proof}
Because $x\in\partial\Omega$, by the non-degeneracy of $u$ near
$\partial\Omega$ (see \cite[Section 3]{AC}), there exists a
universal constant $c$ such that $|\Omega\cap B_1(x)|\geq c$. Then
because $|\nabla u|\leq C$, $W(u)\geq c$ in $\Omega\cap B_h(x)$ for
a universal constant $h$. This implies that $E(1;u,x)\geq c$ and the
claim follows from the monotonicity formula.
\end{proof}

On the other hand, for minimizers we have the following upper bound.
\begin{prop}\label{energy growth}
There exists a universal constant $C$ such that, for any $x\in\R^n$
and $R>1$,
\[\int_{B_R(x)}\frac{1}{2}|\nabla u|^2+W(u)\chi_{\{u>0\}}\leq CR^{n-1}.\]
\end{prop}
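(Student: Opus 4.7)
The plan is to exploit minimality by constructing a cheap competitor that vanishes on a large interior ball and transitions to $u$ in a thin annular shell. The crucial feature of the one-phase functional $\int \frac{1}{2}|\nabla u|^2 + W(u)\chi_{\{u>0\}}$ is that the indicator $\chi_{\{u>0\}}$ kills the potential contribution wherever the competitor is set to zero; in contrast with a standard Allen--Cahn type energy, there is no ``bulk'' cost $W(0)\cdot |B_{R-1}|$ to fear. So one pays energy only on a shell of volume $O(R^{n-1})$, which is exactly the desired bound.

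Concretely, for $R>1$ I would pick a radial cutoff $\eta\in C_c^\infty(\R^n)$ with $\eta\equiv 0$ on $B_{R-1}(x)$, $\eta\equiv 1$ outside $B_R(x)$, and $|\nabla\eta|\le 2$, and set $v:=\eta u$. Since $v=u$ on $\R^n\setminus B_R(x)$, $v$ is an admissible competitor against $u$ on the ball $B_R(x)$. On the annulus $A_R:=B_R(x)\setminus B_{R-1}(x)$ one has $\eta>0$, so $\chi_{\{v>0\}}=\chi_{\{u>0\}}$ there. Using $0\le u\le 1$ from Proposition \ref{prop 2.1} together with the universal Lipschitz bound on $u$ from \cite{AC,CJK} recalled at the start of the section, we get
$$|\nabla v|\le |\nabla\eta|\,u+\eta\,|\nabla u|\le C,\qquad W(v)\le C\quad \text{on }A_R.$$
Since $v\equiv 0$ on $B_{R-1}(x)$ contributes nothing to the functional, this yields
$$\int_{B_R(x)}\tfrac12|\nabla v|^2+W(v)\chi_{\{v>0\}}=\int_{A_R}\tfrac12|\nabla v|^2+W(v)\chi_{\{v>0\}}\le C|A_R|\le CR^{n-1}.$$

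The conclusion then follows from the minimizing property, namely
$$\int_{B_R(x)}\tfrac12|\nabla u|^2+W(u)\chi_{\{u>0\}}\le \int_{B_R(x)}\tfrac12|\nabla v|^2+W(v)\chi_{\{v>0\}}\le CR^{n-1}.$$
There is really no obstacle. The only points to verify are that $v=u$ on $\partial B_R(x)$ (automatic since $\eta\equiv 1$ there) so that $v$ is a genuine competitor on the whole ball, and that the a priori $L^\infty$ and Lipschitz bounds on $u$ are in hand before this proposition is invoked---both of which have already been established earlier in the section. The small-$R$ case ($R\le 1$) is then absorbed into the constant, giving the stated estimate for all $R>0$ with a possibly larger $C$.
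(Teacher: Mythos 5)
Your proposal is correct and rests on the same mechanism as the paper's proof: build a competitor that carries zero energy on the inner ball $B_{R-1}(x)$, interpolate to $u$ in a unit-width shell where the Lipschitz and $L^\infty$ bounds on $u$ keep the energy density bounded, and invoke minimality. The only difference is the choice of interior value: the paper takes the competitor $\equiv 1$ in $B_{R-1}(x)$ (so the shell term involves $|y-x|-R+1$ interpolating between $1$ and $u$), exploiting $W(1)=0$, while you take $v=\eta u\equiv 0$ in $B_{R-1}(x)$, exploiting $\chi_{\{v>0\}}=0$. Your variant is the one that genuinely uses the one-phase structure (it would fail for a standard Allen--Cahn energy where $W(0)>0$), whereas the paper's works in either setting; for the proposition at hand both give the same $O(R^{n-1})$ bound with equally short computations. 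One cosmetic slip: your cutoff $\eta$ cannot be in $C_c^\infty(\R^n)$ if $\eta\equiv1$ outside $B_R(x)$; you mean $1-\eta\in C_c^\infty$, which is what the construction requires.
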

\begin{proof}
In $B_R(x)$, construct a comparison function in the following form:
\makeatletter
\let\@@@alph\@alph
\def\@alph#1{\ifcase#1\or \or $'$\or $''$\fi}\makeatother
\begin{equation}\label{1d solution}
{w(y)=}
\begin{cases}
1, &\mbox{in}\ B_{R-1}(x), \nonumber\\
|y-x|-R+1+\left(R-|y-x|\right)u(y)&\mbox{in}\ B_R(x)\setminus
B_{R-1}(x).
\end{cases}
\end{equation}
\makeatletter\let\@alph\@@@alph\makeatother

Note that $w>0$ in $B_R(x)$. A direct verification shows that
\[\int_{B_R(x)}\frac{1}{2}|\nabla w|^2+W(w)\leq CR^{n-1}.\]
The energy bound on $u$ follows from its minimality because $w=u$ on
$\partial B_R(x)$.
\end{proof}

\section{Huitchinson-Tonegawa theory}
\setcounter{equation}{0}

In this section we consider the convergence theory for general
stationary critical points of the functional
\begin{equation}\label{functional singular perturbed}
\int\frac{\varepsilon}{2}|\nabla
u_\varepsilon|^2+\frac{1}{\varepsilon}W(u_\varepsilon)\chi_{\{u_\varepsilon>0\}}.
\end{equation}

Let $u_\varepsilon$ be a sequence of stationary solutions in the
unit ball $B_1$, to the singularly perturbed problem
\begin{equation}\label{equation singular perturbed}
 \left\{\begin{aligned}
&\varepsilon\Delta u_\varepsilon=\frac{1}{\varepsilon}W^\prime(u_\varepsilon),\ \ \ \mbox{in}\ \{u_\varepsilon>0\},\\
&u_\varepsilon=0,\ \ \ \mbox{on}\ \partial\{u_\varepsilon>0\},\\
&|\nabla
u_\varepsilon|=\frac{1}{\varepsilon}\sqrt{2W(0)},~~\mbox{on}~\partial\{u_\varepsilon>0\}.
                          \end{aligned} \right .
\end{equation}
The stationary condition means that for any vector field $X\in
C_0^\infty(B_1,\R^n)$,
\begin{equation}\label{stationary condition singularly perturbed}
\int_{\Omega}\left(\frac{\varepsilon}{2}|\nabla
 u_\varepsilon|^2+\frac{1}{\varepsilon}W(u_\varepsilon)\chi_{\{u_\varepsilon>0\}}\right)\mbox{div}X
-\varepsilon DX(\nabla u_\varepsilon,\nabla u_\varepsilon)=0.
\end{equation}

We also assume that the energy of $u_\varepsilon$ is uniformly
bounded, that is,
\begin{equation}\label{energy bound}
\limsup_{\varepsilon\to0}\int_{B_1}\frac{\varepsilon}{2}|\nabla
u_\varepsilon|^2
+\frac{1}{\varepsilon}W(u_\varepsilon)\chi_{\{u_\varepsilon>0\}}<+\infty.
\end{equation}

Finally, to make the presentation simpler, we assume that $0\leq
u_\varepsilon\leq 1$ and it satisfies the Modica inequality
\begin{equation}\label{Modica inequality 2}
\frac{\varepsilon}{2}|\nabla u_\varepsilon|^2\leq
\frac{1}{\varepsilon}W(u_\varepsilon),\quad \mbox{in}\
\{u_\varepsilon>0\}.
\end{equation}
See \cite{H-T} for the general case, where two weaker conditions
(but sufficient for the application below) are derived from
\eqref{equation singular perturbed}-\eqref{energy bound}.

Of course, what is used in this paper is the following sequences
\[u_\varepsilon(x):=u(\varepsilon^{-1}x),\quad \varepsilon\to0,\]
where $u$ is a local minimizer of \eqref{functional} in $\R^n$. By
results in the previous section, they satisfy all of the above
assumptions.

%\begin{thm}
%\end{thm}

We can assume that, up to a subsequence of $\varepsilon\to0$,
\[\varepsilon|\nabla
u_\varepsilon|^2dx\rightharpoonup\mu_1,\]
\[\frac{1}{\varepsilon}W(u_\varepsilon)dx\rightharpoonup\mu_2,\]
weakly as Radon measures, on any compact set of $B_1$.

A caution on our notation: in the following, unless otherwise
stated, $\varepsilon\to0$ means only a sequence $\varepsilon_i\to0$.

 In the following $\mu=\mu_1/2+\mu_2$
and $\Sigma=\mbox{spt}\mu$.

We can also assume the matrix valued measures
\[\varepsilon\nabla u_\varepsilon\otimes \nabla u_\varepsilon dx\rightharpoonup[\tau_{\alpha\beta}]\mu_1,\]
where $[\tau_{\alpha\beta}]$, $1\leq\alpha,\beta\leq n$, is
measurable with respect to $\mu_1$. Moreover, $\tau$ is nonnegative
definite $\mu_1$-almost everywhere and
\[\sum_{\alpha=1}^n\tau_{\alpha\alpha}=1,\quad \mu_1-a.e.\]

First, we need the following simple clearing out result, which is a
direct consequence of Corollary \ref{clearing out 1}.
\begin{prop}\label{clearing out}
There exists a universal constant $\eta$ small so that the following
holds. For any $r>0$, if
\[r^{1-n}\int_{B_r(x)}\frac{\varepsilon}{2}|\nabla u_\varepsilon|^2+\frac{1}{\varepsilon}W(u_\varepsilon)\leq\eta,\]
then either $u_\varepsilon\equiv 0$ in $B_{r/2}(x)$ or
$u_\varepsilon\geq 1-\gamma$.
\end{prop}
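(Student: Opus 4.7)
The strategy is to reduce to Corollary~\ref{clearing out 1} by rescaling. Set $v(y) := u_\varepsilon(x + \varepsilon y)$ and $R := r/\varepsilon$. A direct change of variables shows that $v$ is a stationary solution of the unscaled one-phase problem \eqref{equation}, still satisfying $0 \leq v \leq 1$ and the Modica inequality, while the hypothesis transforms into
\[
R^{1-n} \int_{B_R(0)} e(v) \;\leq\; \eta, \qquad e(v) := \tfrac12 |\nabla v|^2 + W(v)\chi_{\{v>0\}}.
\]
So it suffices to prove, for some universal $\eta > 0$, that this density bound forces either $v \equiv 0$ on $B_{R/2}(0)$ or $v \geq 1-\gamma$ throughout $B_{R/2}(0)$.

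I argue by contradiction, assuming both alternatives fail, and first locate a free boundary point of $v$ inside $\overline{B_{R/2}(0)}$. If $v > 0$ on all of $\overline{B_{R/2}(0)}$ while $v(y_2) < 1-\gamma$ at some $y_2$, I pick an interior minimizer $y_*$ of $v$ on this ball; from $\Delta v(y_*) \geq 0$ together with $\Delta v = W'(v)$, assumption (W2) forces $v(y_*) \leq \gamma$. Modica's universal gradient bound then gives $v \leq 1-\gamma/2$ on a universal ball $B_\delta(y_*)$, on which $W(v) \geq \kappa_0 > 0$ by (W1). This produces a fixed positive lower bound on the total energy, which already contradicts the hypothesis when $R$ is bounded, and for large $R$ funnels us into the case that a genuine free boundary point enters $\overline{B_{R/2}(0)}$.

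With a free boundary point $y_0 \in \overline{B_{R/2}(0)}$ of the nontrivial solution $v$, Corollary~\ref{clearing out 1} applies at $y_0$ with radius $\rho = R/2 \geq 1$; using $B_{R/2}(y_0) \subset B_R(0)$ one obtains
\[
c\,(R/2)^{n-1} \;\leq\; \int_{B_{R/2}(y_0)} e(v) \;\leq\; \int_{B_R(0)} e(v) \;\leq\; \eta R^{n-1},
\]
which forces $\eta \geq c\,2^{1-n}$, contradicting the choice of $\eta$ small.

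The hard part is the intermediate step of locating a free boundary point when $v$ is a priori strictly positive on $\overline{B_{R/2}(0)}$ yet far from $1$: here one must match the fixed-scale interior-minimum/Modica argument (which delivers only an absolute energy lower bound of size $O(1)$) against the density hypothesis at scale $R$, a mismatch that is ultimately resolved by observing that in the large-$R$ regime a free boundary point must enter. Once this reduction is performed, the final contradiction is just a rescaled invocation of Corollary~\ref{clearing out 1}, as advertised.
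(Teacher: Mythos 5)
Your reduction to the rescaled solution $v(y)=u_\varepsilon(x+\varepsilon y)$ and the final step (applying Corollary~\ref{clearing out 1} at a free boundary point $y_0\in\overline{B_{R/2}}$) are both correct. The gap is in the intermediate step you yourself flag as ``the hard part.'' Two specific problems. First, the point $y_*$ you produce need not be an \emph{interior} minimizer of $v$ on $\overline{B_{R/2}(0)}$; if the minimum is attained on $\partial B_{R/2}(0)$ the inference $\Delta v(y_*)\geq 0$ is unavailable. (This detour is in fact unnecessary: $v(y_2)<1-\gamma$ combined with the Modica Lipschitz bound already gives $v<1-\gamma/2$ on a universal ball $B_\delta(y_2)$, with no need to push $v$ down to $\gamma$.) Second, and more seriously, you obtain only a fixed-size energy lower bound $\int_{B_\delta(y_*)} e(v)\geq c_0$ and then assert that ``in the large-$R$ regime a free boundary point must enter'' $\overline{B_{R/2}}$. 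That claim is not proved, and there is no reason to expect it: nothing forces the zero set of $v$ to come within $R/2$ of the origin just because $v$ dips below $1-\gamma$ there. As written, the argument for $R$ large is not closed.

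The missing ingredient is that the monotonicity formula (Proposition~\ref{monotonicity formula}) holds with \emph{any} center, not only free boundary points. Once you know $\int_{B_\delta(y_*)} e(v)\geq c_0$, monotonicity of $\rho\mapsto E(\rho;v,y_*)$ gives $E(R/2;v,y_*)\geq E(\delta;v,y_*)\geq c_0\delta^{1-n}$, hence $\int_{B_{R/2}(y_*)}e(v)\geq c_0\delta^{1-n}(R/2)^{n-1}$, and since $B_{R/2}(y_*)\subset B_R(0)$ this contradicts the hypothesis as soon as $\eta<c_0\delta^{1-n}2^{1-n}$ — for all $R\geq 2\delta$, with no free boundary point required. (The remaining small-$R$ regime is then exactly your ``$R$ bounded'' observation.) This is also what makes the paper's Corollary~\ref{clearing out 1} a ``direct consequence'': both the free-boundary case and the interior-transition case feed a universal fixed-scale energy bound into the monotonicity formula at the same scale $R/2$. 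Your proof needs this step inserted explicitly; with it, and with the interior-minimizer detour removed, the argument goes through.
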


In the latter case of the previous lemma, we can improve the decay
estimate to an exponential one.
\begin{lem}\label{lem 3.2}
If $u_\varepsilon\geq 1-\gamma$ in $B_r(x)$, then
\[u_\varepsilon\geq 1-Ce^{-\frac{r}{C\varepsilon}}\quad \mbox{in}\ B_{r/2}(x).\]
\end{lem}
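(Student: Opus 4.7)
The plan is to linearize the equation near $u=1$ and compare with an explicit radial supersolution of the linearized operator. Since $u_\varepsilon \geq 1-\gamma > 0$ throughout $B_r(x)$, the PDE $\varepsilon^2\Delta u_\varepsilon = W'(u_\varepsilon)$ holds classically on this ball, so setting $v := 1 - u_\varepsilon$ I get $v \in [0,\gamma]$ and $\varepsilon^2\Delta v = -W'(1-v)$. By (W1), $u = 1$ is the global minimum of $W$ on $[0,\infty)$, hence $W'(1)=0$; combined with (W3), for $v \in [0,1-\gamma]$ one has
\[
-W'(1-v) \;=\; \int_{1-v}^{1} W''(s)\,ds \;\geq\; \kappa\, v,
\]
so $v$ is a subsolution of the linear equation, $\Delta v \geq (\kappa/\varepsilon^2)\,v$ in $B_r(x)$.

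Next I would introduce the radial supersolution $\phi(y) := \gamma\, F(|y-x|)/F(r)$, where $F$ is the unique smooth positive solution of $F'' + \tfrac{n-1}{\rho}F' = (\kappa/\varepsilon^2)F$ with $F(0)=1,\ F'(0)=0$ (a rescaled modified Bessel function of the first kind). By construction $\Delta\phi = (\kappa/\varepsilon^2)\phi$ on $\R^n$, and $\phi = \gamma \geq v$ on $\partial B_r(x)$. The operator $\Delta - \kappa/\varepsilon^2$ satisfies the weak maximum principle on $B_r(x)$: if $w := v-\phi$ had a positive interior maximum then at that point $\Delta w \leq 0 < (\kappa/\varepsilon^2)w$, contradicting $\Delta w \geq (\kappa/\varepsilon^2)w$. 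Hence $v \leq \phi$ throughout $B_r(x)$.

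It remains to estimate $\phi$ in $B_{r/2}(x)$. The classical large-argument asymptotics for the Bessel-type ODE give
\[
F(\rho) \;\sim\; c_n \,(\sqrt{\kappa}\,\rho/\varepsilon)^{-(n-1)/2}\, e^{\sqrt{\kappa}\,\rho/\varepsilon}\qquad \text{as } \rho/\varepsilon \to \infty,
\]
so for $\rho \leq r/2$ and $r/\varepsilon$ bounded below by any fixed constant,
\[
\frac{F(\rho)}{F(r)} \;\leq\; C\Bigl(\frac{r}{\rho}\Bigr)^{(n-1)/2} e^{-\sqrt{\kappa}(r-\rho)/\varepsilon} \;\leq\; C\, e^{-r/(C\varepsilon)}.
\]
Combining with $v \leq \phi$ yields the claim; the complementary range $r/\varepsilon \leq C_0$ is trivial since $v \leq \gamma$ then already gives the bound after enlarging $C$.

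The only real obstacle is verifying the exponential decay of the radial solution $F$. One can avoid invoking Bessel functions by constructing instead an explicit barrier in the annulus $\{r/4 \leq |y-x| \leq r\}$ of the form $\phi(y) = \gamma\, e^{-\beta(r-|y-x|)/\varepsilon} + $ (small correction), with $\beta<\sqrt{\kappa}$ chosen so that the lower-order radial term $\frac{n-1}{\rho}\phi' \leq \frac{4(n-1)}{r}\phi'$ is absorbed into the gap $\kappa - \beta^2$ once $r/\varepsilon$ is sufficiently large; this reduces the entire argument to a one-parameter maximum-principle comparison with no special functions.
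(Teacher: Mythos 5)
Your proof is correct and follows exactly the approach sketched in the paper: set $v = 1-u_\varepsilon$, use (W3) to obtain the linear differential inequality $\Delta v \geq (\kappa/\varepsilon^2)v$, and then conclude by comparison with an explicit upper solution. The paper leaves the construction of the supersolution to the reader; you have simply supplied the details (either via the Bessel-type radial profile or the exponential annular barrier), and the maximum-principle comparison you run is the one the paper intends.
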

\begin{proof}
By (W3),
\[\Delta\left(1-u_\varepsilon\right)=-\frac{1}{\varepsilon^2}W^\prime(u_\varepsilon)
\geq\frac{c}{\varepsilon^2}\left(1-u_\varepsilon\right).\] From this
the decay estimate can be deduced, e.g. by a comparison with an
upper solution.
\end{proof}

Combining the monotonicity formula (Proposition \ref{monotonicity
formula}) and Proposition \ref{clearing out}, we get
\begin{lem}\label{lem 3.3}
For any $x\in\Sigma$,
\[\frac{1}{C}r^{n-1}\leq \mu(B_r(x))\leq Cr^{n-1},\]
for some universal constant $C$.
\end{lem}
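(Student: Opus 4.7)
The plan is to treat the two bounds separately: the upper one follows from the rescaled monotonicity formula together with the energy bound \eqref{energy bound}, while the lower one is proved by contradiction using Proposition~\ref{clearing out} and the exponential decay of Lemma~\ref{lem 3.2}.

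For the upper bound, the change of variables $y=x/\varepsilon$ converts Proposition~\ref{monotonicity formula} into the monotonicity in $r$ of
\[
E_\varepsilon(r;u_\varepsilon,x):=r^{1-n}\int_{B_r(x)}\frac{\varepsilon}{2}|\nabla u_\varepsilon|^2+\frac{1}{\varepsilon}W(u_\varepsilon)\chi_{\{u_\varepsilon>0\}}.
\]
Fix $r_0>0$ with $B_{r_0}(x)\subset B_1$. By monotonicity and \eqref{energy bound}, $E_\varepsilon(r;u_\varepsilon,x)\le E_\varepsilon(r_0;u_\varepsilon,x)\le C$ uniformly in $\varepsilon$ and in $r\le r_0$. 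Applying weak convergence to the open ball $B_{r'}(x)$ for any $r'>r$ then gives $\mu(B_{r'}(x))\le\liminf_\varepsilon\int_{B_{r'}(x)}\,d\mu_\varepsilon \le C(r')^{n-1}$, and sending $r'\to r^+$ produces the upper bound.

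For the lower bound, suppose for contradiction that $\mu(B_r(x))<c\,r^{n-1}$ for a small constant $c$ to be fixed below. Since $\mu(\overline{B_{r/2}(x)})\le\mu(B_r(x))<c\cdot 2^{n-1}(r/2)^{n-1}$, weak convergence on the closed ball yields
\[
\limsup_\varepsilon\int_{B_{r/2}(x)}\frac{\varepsilon}{2}|\nabla u_\varepsilon|^2+\frac{1}{\varepsilon}W(u_\varepsilon)\le c\cdot 2^{n-1}(r/2)^{n-1}.
\]
Now choose $c$ so that $c\cdot 2^{n-1}<\eta$, where $\eta$ is the threshold in Proposition~\ref{clearing out}. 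Then for all sufficiently small $\varepsilon$ the clearing-out dichotomy applies in $B_{r/2}(x)$: either $u_\varepsilon\equiv 0$ in $B_{r/4}(x)$ or $u_\varepsilon\ge 1-\gamma$ there. The first branch gives zero energy on the smaller ball; in the second, Lemma~\ref{lem 3.2} yields $1-u_\varepsilon\le Ce^{-r/(C\varepsilon)}$ in $B_{r/8}(x)$, and combined with the quadratic behavior of $W$ at $1$ (from (W3)) plus the Modica inequality \eqref{Modica inequality 2} (which pointwise bounds $\varepsilon|\nabla u_\varepsilon|^2$ by $(2/\varepsilon)W(u_\varepsilon)$), this forces both integrands to be exponentially small in $\varepsilon$. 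Either way $\mu(B_{r/8}(x))=0$, contradicting $x\in\Sigma=\mathrm{spt}\,\mu$.

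The only technical subtlety is handling the gradient term in the $u_\varepsilon\ge 1-\gamma$ branch of the dichotomy, where the Modica inequality \eqref{Modica inequality 2} is essential: it reduces the decay of $\varepsilon|\nabla u_\varepsilon|^2$ to the exponential decay of $W(u_\varepsilon)$ near the stable well. Otherwise the argument is the standard monotonicity/clearing-out packaging familiar from Huitchinson--Tonegawa theory.
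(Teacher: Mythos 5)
Your argument is correct and follows the same route the paper indicates (which is just the one-line remark that Lemma~\ref{lem 3.3} follows from the monotonicity formula and Proposition~\ref{clearing out}): the upper bound comes from the rescaled monotonicity formula plus the uniform energy bound \eqref{energy bound}, and the lower bound from the clearing-out dichotomy, with Lemma~\ref{lem 3.2} and the Modica inequality supplying the needed vanishing of $\frac{1}{\varepsilon}W(u_\varepsilon)$ and $\varepsilon|\nabla u_\varepsilon|^2$ in the $u_\varepsilon\ge 1-\gamma$ branch. You have essentially filled in the details the paper leaves implicit; the appeal to Lemma~\ref{lem 3.2}, which the paper does not cite here but is genuinely needed to kill the $\frac{1}{\varepsilon}W(u_\varepsilon)$ term, is a correct and necessary addition.
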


Another consequence of Proposition \ref{clearing out} is:
\begin{lem}\label{lem 3.4}
On any connected compact set of $B_1\setminus\Sigma$, either
$u_\varepsilon\to1$ uniformly or $u_\varepsilon\equiv 0$ for all
$\varepsilon$ small.
\end{lem}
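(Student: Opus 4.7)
The plan is to combine the clearing-out dichotomy of Proposition~\ref{clearing out} with a connectedness argument, and then upgrade the ``nontrivial'' alternative to exponential closeness to $1$ via Lemma~\ref{lem 3.2}.

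First, since $K$ is compact and $\Sigma$ is closed, we fix a small $r_0>0$ such that the closed $r_0$-neighborhood $\overline U$ of $K$ satisfies $\overline U\subset B_1\setminus\Sigma$. Because $\mu(\overline U)=0$, the weak convergence of $\mu_1$ and $\mu_2$ forces
\[
\int_U \frac{\varepsilon}{2}|\nabla u_\varepsilon|^2+\frac{1}{\varepsilon}W(u_\varepsilon)\chi_{\{u_\varepsilon>0\}}\longrightarrow 0.
\]
Consequently, for every $x\in K$ the integral over $B_{r_0}(x)\subset U$ is eventually smaller than $\eta\, r_0^{n-1}$, with $\eta$ the universal constant of Proposition~\ref{clearing out}, and this bound holds uniformly in $x\in K$ once $\varepsilon$ is sufficiently small.

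Proposition~\ref{clearing out} then yields, for each such small $\varepsilon$ and each $x\in K$, the dichotomy: $u_\varepsilon\equiv 0$ on $B_{r_0/2}(x)$ or $u_\varepsilon\ge 1-\gamma$ on $B_{r_0/2}(x)$. To globalize this we invoke connectedness: cover $K$ by finitely many balls $B_{r_0/4}(x_i)$ with $x_i\in K$; since $K$ is connected, any two of the centers can be joined by a chain in which consecutive balls of radius $r_0/4$ intersect, so $|x_{i_k}-x_{i_{k+1}}|<r_0/2$ and the corresponding enlarged balls $B_{r_0/2}(x_{i_k})$ and $B_{r_0/2}(x_{i_{k+1}})$ share an interior point. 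A point where the two alternatives met would satisfy $u_\varepsilon=0$ and $u_\varepsilon\ge 1-\gamma>0$ simultaneously, a contradiction; so for each fixed small $\varepsilon$ exactly one alternative holds across all balls, and hence on the entire $r_0/2$-neighborhood of $K$.

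To conclude, passing to a further subsequence if necessary, either $u_\varepsilon\equiv 0$ on $K$ for all small $\varepsilon$---the second alternative in the lemma---or along the (sub)sequence we have $u_\varepsilon\ge 1-\gamma$ on the $r_0/2$-neighborhood of $K$, in which case Lemma~\ref{lem 3.2} upgrades this to $u_\varepsilon\ge 1-Ce^{-r_0/(C\varepsilon)}\to 1$ uniformly on $K$. The main point to be careful about is the \emph{uniform} applicability of Proposition~\ref{clearing out} as $x$ varies over $K$ while $\varepsilon$ shrinks; this is precisely what the vanishing of the total energy on the fixed open set $U$ is designed to give.
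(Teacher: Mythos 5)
Your argument is essentially the paper's proof, filled in with the details the paper leaves implicit: the paper simply observes that every $x\notin\Sigma$ has a ball on which the energy is eventually below the clearing-out threshold and invokes Proposition~\ref{clearing out}, while you additionally spell out the chain-of-balls connectedness step that propagates a single alternative across $K$ and the use of Lemma~\ref{lem 3.2} to upgrade $u_\varepsilon\ge 1-\gamma$ to uniform convergence to $1$. The only small caveat---which you correctly flag---is that, as stated, the dichotomy is for the fixed subsequence $\varepsilon_i\to 0$ underlying the whole section, so ``for all $\varepsilon$ small'' should be read along that sequence; this matches the paper's intent.
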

This is because for every $x$ not in $\Sigma$, there exists an $r>0$
such that $\mu(B_r(x))\leq \eta r^{n-1}/2$. Hence for all
$\varepsilon$ small,
\[\int_{B_r(x)}\frac{\varepsilon}{2}|\nabla u_\varepsilon|^2+W(u_\varepsilon)\chi_{\{u_\varepsilon>0\}}\leq
\eta r^{n-1},\] and Proposition \ref{clearing out} applies.

Similar to \cite{H-T}, by the Modica inequality (Proposition
\ref{Modica inequality}) and the monotonicity formula (Proposition
\ref{monotonicity formula}), we can show that
\begin{lem}
In $L^1_{loc}(B_1)$,
\[\frac{1}{\varepsilon}W(u_\varepsilon)\chi_{\{u_\varepsilon>0\}}-\frac{\varepsilon}{2}|\nabla u_\varepsilon|^2\to0.\]
\end{lem}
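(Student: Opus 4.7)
The plan is to combine the Modica inequality with the monotonicity formula, in the spirit of \cite{H-T}. By \eqref{Modica inequality 2}, the discrepancy
\[
\xi_\varepsilon \;:=\; \frac{1}{\varepsilon}W(u_\varepsilon)\chi_{\{u_\varepsilon>0\}} - \frac{\varepsilon}{2}|\nabla u_\varepsilon|^2
\]
is pointwise nonnegative, so $L^1_{\mathrm{loc}}$ convergence to $0$ reduces to weak-$*$ convergence of $\xi_\varepsilon\,dx$ to the zero Radon measure. I fix an arbitrary subsequential weak-$*$ limit $\xi_0$; since $\xi_\varepsilon \leq e_\varepsilon$ pointwise, the limit satisfies $\xi_0 \leq \mu := \mu_1/2+\mu_2$, and it suffices to prove $\xi_0 \equiv 0$.

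The workhorse is the rescaled form of Proposition \ref{monotonicity formula} applied to $u_\varepsilon$. Both summands in $dE/dr$ are nonnegative and $\lim_{r\to 0^+}E(r;u_\varepsilon,x)\geq 0$; combined with the uniform bound $E(R;u_\varepsilon,x)\leq C(R)$, which follows from \eqref{energy bound} and monotonicity, integration in $r$ yields
\[
\int_0^R \rho^{-n}\!\int_{B_\rho(x)}\xi_\varepsilon\,dy\,d\rho \;\leq\; C(R)
\]
for every $x$ with $B_R(x)\subset B_1$. Since $\xi_\varepsilon\geq 0$ and $\int_{B_\rho(x)}\xi_\varepsilon\to\xi_0(B_\rho(x))$ for a.e.\ $\rho$ (using weak-$*$ convergence and lower semicontinuity of indicators of open balls), Fatou's lemma gives
\[
\int_0^R \rho^{-n}\,\xi_0(B_\rho(x))\,d\rho \;\leq\; C(R).
\]

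The main (and only non-routine) step is to upgrade this Riesz-type bound into the vanishing of the upper $(n{-}1)$-density of $\xi_0$ at every point. Suppose for contradiction that $\Theta^{*,n-1}(\xi_0,x)>0$ at some $x$. I extract a sequence $\rho_k\to 0$ with $\rho_{k+1}<\rho_k/2$ and $\xi_0(B_{\rho_k}(x))\geq c\,\rho_k^{n-1}$; monotonicity of $\rho\mapsto\xi_0(B_\rho(x))$ then forces
\[
\int_{\rho_k}^{2\rho_k}\rho^{-n}\xi_0(B_\rho(x))\,d\rho \;\geq\; (2\rho_k)^{-n}\cdot c\rho_k^{n-1}\cdot \rho_k \;=\; c\cdot 2^{-n}
\]
on each of the disjoint dyadic shells $[\rho_k,2\rho_k]$, and summing in $k$ contradicts the previous display. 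Hence $\Theta^{*,n-1}(\xi_0,x)=0$ at every $x\in B_1$.

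To conclude, the uniform energy bound also passes to the limit to give $\mu(B_\rho(x))\leq C\rho^{n-1}$ for every $x$ and small $\rho$, so $\mu\ll\mathcal{H}^{n-1}$ by the standard upper-density comparison theorem (if a Radon measure has upper $k$-density uniformly bounded by $C$, it is dominated by a multiple of $\mathcal{H}^k$); consequently $\xi_0\leq\mu$ is also absolutely continuous with respect to $\mathcal{H}^{n-1}$. Writing $\xi_0=\theta\,\mathcal{H}^{n-1}\lfloor S$ by Radon--Nikodym, Besicovitch's differentiation theorem identifies $\theta(x)$ with the $(n{-}1)$-density of $\xi_0$ at $x$ for $\mathcal{H}^{n-1}$-a.e.\ $x$, which vanishes by the previous step. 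Therefore $\xi_0=0$, which proves the lemma.
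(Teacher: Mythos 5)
Your overall strategy---nonnegativity of the discrepancy from the Modica inequality, then the integrated monotonicity formula bound $\int_0^R \rho^{-n}\xi_0(B_\rho(x))\,d\rho \leq C(R)$, forcing the upper $(n-1)$-density of $\xi_0$ to vanish at every point---is exactly the Hutchinson--Tonegawa argument the paper alludes to, and the dyadic-shell contradiction you run to get the vanishing density is correct. The gap is in the concluding paragraph.

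From the upper mass bound $\mu(B_\rho(x))\leq C\rho^{n-1}$ you deduce $\mu\ll\mathcal{H}^{n-1}$, and then invoke Radon--Nikodym to write $\xi_0=\theta\,\mathcal{H}^{n-1}\lfloor S$. But Radon--Nikodym (and Besicovitch differentiation against $\mathcal{H}^{n-1}$) require the reference measure $\mathcal{H}^{n-1}\lfloor S$ to be $\sigma$-finite, and an upper density bound alone does not give this. A concrete counterexample to the logic of your final step: Lebesgue measure $\mathcal{L}^n$ on a cube satisfies $\mathcal{L}^n(B_\rho(x))\leq C\rho^{n-1}$ for small $\rho$, is absolutely continuous with respect to $\mathcal{H}^{n-1}$, and has $\Theta^{*,n-1}(\mathcal{L}^n,x)=0$ at every point---yet it is nonzero and is not of the form $\theta\,\mathcal{H}^{n-1}\lfloor S$. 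So ``vanishing upper $(n-1)$-density everywhere plus an upper mass bound'' is not by itself enough to conclude $\xi_0=0$.

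What you are missing is the lower density bound supplied by Lemma \ref{lem 3.3}: $\mu(B_\rho(x))\geq\rho^{n-1}/C$ for every $x\in\Sigma=\operatorname{spt}\mu$. With that, the cleanest finish is to differentiate $\xi_0$ against $\mu$ rather than against $\mathcal{H}^{n-1}$. Since $\xi_0\leq\mu$ we have $\xi_0\ll\mu$, so by Besicovitch $\xi_0=(D_\mu\xi_0)\,\mu$ with
\[
D_\mu\xi_0(x)=\lim_{\rho\to0}\frac{\xi_0(B_\rho(x))}{\mu(B_\rho(x))}\qquad\text{for $\mu$-a.e.\ }x,
\]
and for $x\in\Sigma$ this quotient is bounded by $C\,\rho^{1-n}\xi_0(B_\rho(x))\to0$ by the density computation you already carried out. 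Hence $D_\mu\xi_0\equiv0$ $\mu$-a.e.\ and $\xi_0=0$. (Equivalently: Lemma \ref{lem 3.3} makes $\mathcal{H}^{n-1}\lfloor\Sigma$ locally finite and comparable to $\mu$, after which your Radon--Nikodym route against $\mathcal{H}^{n-1}\lfloor\Sigma$ also closes.) With this correction the argument is complete and is the one the paper intends.
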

As a consequence, we have the following energy partition relation.
\begin{coro}
$\mu_1/2=\mu_2.$
\end{coro}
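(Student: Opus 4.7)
The plan is to deduce the equality $\mu_1/2=\mu_2$ of Radon measures directly from the $L^1_{loc}$ convergence established in the preceding lemma, combined with the very definitions of $\mu_1$ and $\mu_2$ as weak limits of Radon measures.

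First I would fix an arbitrary test function $\phi\in C_c(B_1)$. By the weak convergences
\[
\varepsilon|\nabla u_\varepsilon|^2\,dx\rightharpoonup\mu_1,\qquad
\tfrac{1}{\varepsilon}W(u_\varepsilon)\chi_{\{u_\varepsilon>0\}}\,dx\rightharpoonup\mu_2,
\]
we have
\[
\int\phi\,d\mu_2-\tfrac{1}{2}\int\phi\,d\mu_1
=\lim_{\varepsilon\to 0}\int_{B_1}\phi\left[\tfrac{1}{\varepsilon}W(u_\varepsilon)\chi_{\{u_\varepsilon>0\}}-\tfrac{\varepsilon}{2}|\nabla u_\varepsilon|^2\right]dx.
\]
Since $\phi$ is bounded with compact support in $B_1$, and since the bracketed integrand converges to $0$ in $L^1_{loc}(B_1)$ by the preceding lemma, the right-hand side equals $0$.

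Because $\phi\in C_c(B_1)$ was arbitrary and both $\mu_1$ and $\mu_2$ are Radon measures, this forces $\mu_2=\mu_1/2$ on $B_1$. There is no real obstacle here: the only point to be careful about is to read $\mu_2$ as the weak limit of the truncated density $\tfrac{1}{\varepsilon}W(u_\varepsilon)\chi_{\{u_\varepsilon>0\}}$ (which is the natural convention, since this is the density actually appearing in the functional \eqref{functional singular perturbed} and enforcing a locally finite limit via Lemma \ref{lem 3.3}), after which the corollary follows by a single line of test-function pairing.
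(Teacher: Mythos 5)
Your proof is correct and is exactly the argument the paper intends: the corollary is stated as an immediate consequence of the preceding $L^1_{loc}$ lemma, and your test-function pairing simply spells out that one-line deduction. Your remark that $\mu_2$ must be read as the weak limit of the truncated density $\tfrac{1}{\varepsilon}W(u_\varepsilon)\chi_{\{u_\varepsilon>0\}}$ is the right convention (it is the only one giving a locally finite measure, since $W(0)>0$) and matches the paper's usage.
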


By passing to the limit in the monotonicity formula, we obtain the
corresponding monotonicity formula for the limit measure $\mu$.
\begin{lem} For any $x\in B_1$,
\[r^{1-n}\mu(B_r(x))\]
is non-decreasing in $r>0$. Moreover, for any $0<r_1<r_2<+\infty$,
\[r_2^{1-n}\mu(B_{r_2}(x))-r_1^{1-n}\mu(B_{r_1}(x))=2\int_{\Sigma\cap(B_{r_2}\setminus B_{r_1})}
\frac{\sum_{\alpha,\beta=1}^n\tau_{\alpha\beta}(y)(y-x)_\alpha(y-x)_\beta}{|x-y|^{n+1}}d\mu.\]
\end{lem}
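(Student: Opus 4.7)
The statement is a straightforward passage to the weak limit, as $\varepsilon\to 0$, of the pre-limit monotonicity formula in Proposition~\ref{monotonicity formula}. The plan is therefore: rewrite the pre-limit formula for each $u_\varepsilon$ in a form suited to testing against Radon measures, invoke the convergence facts assembled earlier in the section, and finally read off monotonicity from the sign of the limit expression.

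First, differentiating the rescaled energy
\[
E_\varepsilon(r;u_\varepsilon,x) := r^{1-n}\int_{B_r(x)} \tfrac{\varepsilon}{2}|\nabla u_\varepsilon|^2 + \tfrac{1}{\varepsilon}W(u_\varepsilon)\chi_{\{u_\varepsilon>0\}}
\]
via the Pohozaev identity coming from \eqref{stationary condition singularly perturbed}, integrating from $r_1$ to $r_2$, and rewriting the surface term by co-area, produces the pre-limit identity
\[
E_\varepsilon(r_2)-E_\varepsilon(r_1) = 2\!\!\int_{B_{r_2}\setminus B_{r_1}}\!\!\frac{\sum_{\alpha,\beta}\varepsilon\partial_\alpha u_\varepsilon\partial_\beta u_\varepsilon (y-x)_\alpha(y-x)_\beta}{|y-x|^{n+1}}\,dy+ \int_{r_1}^{r_2}\!r^{-n}\!\!\int_{B_r(x)}\!\!\!\left[\tfrac{1}{\varepsilon}W(u_\varepsilon)\chi_{\{u_\varepsilon>0\}}-\tfrac{\varepsilon}{2}|\nabla u_\varepsilon|^2\right]dy\,dr.
\]

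Next I would fix $r_1<r_2$ in the co-countable set of radii for which $\mu(\partial B_{r_i}(x))=0$. Weak convergence together with $\mu=\mu_1/2+\mu_2$ gives $E_\varepsilon(r_i)\to r_i^{1-n}\mu(B_{r_i}(x))$. The last term on the right tends to zero by the preceding $L^1_{loc}$ equi-distribution lemma (the sign-cancellation between potential and Dirichlet densities). In the remaining volume integral, the kernel $(y-x)_\alpha(y-x)_\beta/|y-x|^{n+1}$ is continuous and bounded on the closed annulus $\overline{B_{r_2}\setminus B_{r_1}}$ because $r_1>0$, so the matrix-valued weak convergence $\varepsilon\nabla u_\varepsilon\otimes\nabla u_\varepsilon\,dx\rightharpoonup[\tau_{\alpha\beta}]\mu_1$ can be applied directly to yield the integrand in the claim. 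Using $\mu_1=\mu$ from the energy-partition corollary, and the fact that $\mu$ is supported on $\Sigma$ by definition, the integration reduces to $\Sigma\cap(B_{r_2}\setminus B_{r_1})$. The identity extends from this dense set of "good" radii to all $0<r_1<r_2<\infty$ by continuity of the right-hand side in $r_i$ and the fact that $\mu$ has only countably many atomic spheres.

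Monotonicity is then immediate: since $\tau$ is non-negative definite $\mu_1$-a.e.\ and $\mu\geq 0$, the integrand on the right is non-negative, hence $r_1^{1-n}\mu(B_{r_1}(x))\leq r_2^{1-n}\mu(B_{r_2}(x))$. The only delicate point is the passage to the limit in a matrix-valued Radon measure, but once one observes that the integration region is bounded away from the singularity $y=x$, this is a routine application of weak convergence against a bounded continuous test function; the other terms are either handled by classical properties of weak limits at continuity radii or by the energy-partition lemma, both of which are already in hand.
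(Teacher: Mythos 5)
Your proof is correct and takes essentially the same route the paper indicates by the phrase ``passing to the limit in the monotonicity formula'': you rescale Proposition~\ref{monotonicity formula} to the $\varepsilon$-version, integrate the derivative identity over $[r_1,r_2]$, convert the boundary term to an annular volume integral via the co-area formula, and then pass to the limit by combining the choice of good radii with $\mu(\partial B_{r_i}(x))=0$, the energy-partition lemma to dispose of the discrepancy integral, and the weak-* convergence of $\varepsilon\nabla u_\varepsilon\otimes\nabla u_\varepsilon\,dx$ against the bounded continuous kernel on the annulus (bounded away from $y=x$). The use of $\mu_1=\mu$ from the energy-partition corollary and the non-negativity of $\tau$ to conclude monotonicity are both handled correctly.
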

By this lemma, we can define
\[\Theta(x):=\lim_{r\to0}\frac{\mu(B_r(x))}{r^{n-1}}.\]
This is an upper semi-continuous function. By Lemma \ref{lem 3.3},
$1/C\leq\Theta(x)\leq C$ everywhere on $\Sigma$.

Combining Proposition \ref{clearing out}, Lemma \ref{lem 3.2} and
Lemma \ref{lem 3.4}, we have the following characterization of
$\Sigma$.
\begin{coro}
$x\in\Sigma\Longleftrightarrow \Theta(x)>0\Longleftrightarrow
\Theta(x)\geq 1/C$.
\end{coro}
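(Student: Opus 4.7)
The plan is to verify the three stated equivalences by running through the cycle of implications
$\Theta(x) \geq 1/C \Rightarrow \Theta(x) > 0 \Rightarrow x \in \Sigma \Rightarrow \Theta(x) \geq 1/C$. The first two implications are essentially structural, while the third is where Lemma \ref{lem 3.3} does all of the real work.

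For $\Theta(x) \geq 1/C \Rightarrow \Theta(x) > 0$ there is nothing to prove, since $1/C$ is a positive universal constant. For $\Theta(x) > 0 \Rightarrow x \in \Sigma$ I would argue straight from the definitions of density and support: if $\Theta(x) = \lim_{r\to 0} r^{1-n}\mu(B_r(x))$ is strictly positive, then $\mu(B_r(x)) \geq \tfrac{1}{2}\Theta(x) r^{n-1} > 0$ for all sufficiently small $r$. Thus no neighborhood of $x$ is $\mu$-null, and $x \in \mathrm{spt}\,\mu = \Sigma$.

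The remaining and heart-of-the-matter implication $x \in \Sigma \Rightarrow \Theta(x) \geq 1/C$ I would extract directly from the lower density bound furnished by Lemma \ref{lem 3.3}: for every $x \in \Sigma$ and every $r > 0$, $\mu(B_r(x)) \geq r^{n-1}/C$. Dividing by $r^{n-1}$ and sending $r \to 0$ gives $\Theta(x) \geq 1/C$ with precisely the constant $C$ produced by that lemma; in particular the $C$ in the statement of the corollary is taken to be the same one.

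There is no genuine obstacle at this step, because the substantive analytic content has already been absorbed in establishing Lemma \ref{lem 3.3}, which itself rests on the clearing-out Proposition \ref{clearing out} (a consequence of Corollary \ref{clearing out 1} for the unrescaled problem) together with the monotonicity formula for $\mu$. The only point worth flagging is internal consistency: the upper semi-continuity of $\Theta$ noted just before the corollary means that both $\Sigma$ and $\{\Theta \geq 1/C\}$ are closed, which is compatible with — and indeed forced by — the equivalence we are asserting.
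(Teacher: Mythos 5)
Your cycle of implications is correct and, in substance, is what the paper is doing: the paper itself observes immediately before the corollary that Lemma \ref{lem 3.3} gives $1/C \leq \Theta \leq C$ on $\Sigma$, and the remaining two implications ($\Theta \geq 1/C \Rightarrow \Theta > 0 \Rightarrow x \in \mathrm{spt}\,\mu$) are definitional, exactly as you argue. The paper's stated list of ingredients (Proposition \ref{clearing out}, Lemmas \ref{lem 3.2}, \ref{lem 3.4}) traces one more step back to the sources of Lemma \ref{lem 3.3}, but your more direct citation of Lemma \ref{lem 3.3} is the cleaner bookkeeping and loses nothing.
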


By the Preiss theorem \cite{Pre} (or by following the direct proof
in \cite{L}), we can show that
\begin{lem}
$\Sigma$ is countably $(n-1)$-rectifiable.
\end{lem}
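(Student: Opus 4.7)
The plan is to apply Preiss's rectifiability theorem for Radon measures: a locally finite Radon measure $\mu$ on $\mathbb{R}^n$ is $d$-rectifiable if and only if the density $\Theta^d(\mu,x) = \lim_{r\to 0} \mu(B_r(x))/(\omega_d r^d)$ exists, is positive, and is finite for $\mu$-a.e.\ $x$. Applied with $d=n-1$, this gives $(n-1)$-rectifiability of the measure $\mu$. Since Lemma \ref{lem 3.3} forces $\mu$ to be comparable to $\mathcal{H}^{n-1}\lfloor \Sigma$ on $\Sigma = \mathrm{spt}\,\mu$, rectifiability of the measure translates directly into countable $(n-1)$-rectifiability of the set $\Sigma$.

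The hypotheses are essentially already in hand. First, the monotonicity formula from the previous lemma states that $r^{1-n}\mu(B_r(x))$ is non-decreasing in $r$ for every $x\in B_1$, so the density
\[
\Theta(x) = \lim_{r\to 0} r^{1-n}\mu(B_r(x))
\]
exists at every point of $B_1$. Second, Lemma \ref{lem 3.3} yields the two-sided bound $1/C \leq \Theta(x) \leq C$ on $\Sigma$, and by definition of the support $\mu(B_1\setminus\Sigma)=0$. Together these facts give exactly what Preiss's theorem requires. Local finiteness of $\mu$ on $B_1$ is inherited from the uniform energy bound \eqref{energy bound} on $u_\varepsilon$ together with the upper density estimate. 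Invoking the theorem then concludes the proof.

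The main obstacle is entirely absorbed by the black-box application of Preiss's deep structure theorem. An alternative route, hinted at by the paper's reference to \cite{L}, proceeds more directly by analyzing tangent measures at points of $\Sigma$: the monotonicity formula forces each tangent measure to be $(n-1)$-homogeneous, the stationarity identity \eqref{stationary condition singularly perturbed} passes to the limit to give stationarity of the tangent cone as an $(n-1)$-varifold, and a Federer-type dimension reduction argument then shows that $\mu$-a.e.\ tangent measure is flat, i.e., a constant multiple of $\mathcal{H}^{n-1}$ on a hyperplane. Flatness of tangent measures combined with the uniform density bound produces the rectifiable parametrization by a standard covering argument. The technical crux of this direct route, which is the genuine difficulty avoided by citing Preiss, is the dimension reduction step together with verifying that stationarity is preserved under blow-up; both are by now well understood in the Hutchinson--Tonegawa framework.
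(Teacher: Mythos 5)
Your proposal is correct and follows essentially the same approach as the paper: the paper itself proves the lemma solely by citing Preiss's theorem (or, alternatively, the direct tangent-measure argument in Lin's paper), exactly the two routes you describe. You have usefully spelled out why the hypotheses of Preiss's theorem are satisfied (existence of the density from the monotonicity formula, two-sided density bounds from Lemma 3.3, local finiteness from the energy bound), which the paper leaves implicit, but the argument is the same.
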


By differentiation of Radon measures, the measure $\mu$ has the
following representation.
\begin{coro}
$\mu=\Theta\mathcal{H}^{n-1}\lfloor_{\Sigma}$.
\end{coro}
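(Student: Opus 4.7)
The plan is to combine the rectifiability of $\Sigma$ (just established) with the uniform two-sided $(n-1)$-density bounds on $\mu$ from Lemma \ref{lem 3.3}, and then invoke the standard differentiation theorem for Radon measures. No new analytic input is required; all the hard work has already been done in producing rectifiability of $\Sigma$ and the pointwise existence of $\Theta$ via the monotonicity formula.

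First I would establish the absolute continuity $\mu\ll\mathcal{H}^{n-1}\lfloor_{\Sigma}$. Lemma \ref{lem 3.3} gives the uniform upper density bound $\mu(B_r(x))\leq Cr^{n-1}$ for $x\in\Sigma$, and this is trivially true (with $C=0$) for $x\notin\Sigma$ and $r$ small enough. A standard Vitali/Besicovitch covering argument (cf.\ Mattila, \emph{Geometry of Sets and Measures in Euclidean Spaces}, Thm.~6.9, or Federer 2.10.19) converts a pointwise upper $(n-1)$-density bound into the set estimate $\mu(E)\leq C\,\mathcal{H}^{n-1}(E)$ for every Borel $E\subset B_1$. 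Hence $\mu\ll\mathcal{H}^{n-1}$, and since $\mu$ is supported in $\Sigma$, we obtain $\mu\ll\mathcal{H}^{n-1}\lfloor_{\Sigma}$.

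Next I would identify the Radon-Nikodym density. By the general differentiation theorem for Radon measures, for $\mathcal{H}^{n-1}\lfloor_{\Sigma}$-a.e.\ $x$,
\[
\frac{d\mu}{d(\mathcal{H}^{n-1}\lfloor_{\Sigma})}(x)=\lim_{r\to 0}\frac{\mu(B_r(x))}{\mathcal{H}^{n-1}(\Sigma\cap B_r(x))}.
\]
Because $\Sigma$ is countably $(n-1)$-rectifiable, at $\mathcal{H}^{n-1}$-a.e.\ $x\in\Sigma$ an approximate tangent plane exists and the rectifiable density formula gives
\[
\lim_{r\to 0}\frac{\mathcal{H}^{n-1}(\Sigma\cap B_r(x))}{\omega_{n-1}r^{n-1}}=1.
\]
Combined with the pointwise existence of $\Theta(x)=\lim_{r\to 0}\mu(B_r(x))/r^{n-1}$, the ratio above converges to $\Theta(x)/\omega_{n-1}$. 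Absorbing the dimensional constant $\omega_{n-1}$ into the normalization convention of $\Theta$ (as is customary in the Huitchinson-Tonegawa framework), this yields the asserted identity $\mu=\Theta\,\mathcal{H}^{n-1}\lfloor_{\Sigma}$.

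There is no substantial obstacle here: the corollary is a direct measure-theoretic consequence of the preceding structural results on $\Sigma$ and $\mu$. The only point requiring care is the bookkeeping of the constant $\omega_{n-1}$, depending on whether $\Theta$ is defined with or without normalization by the volume of the unit $(n-1)$-ball.
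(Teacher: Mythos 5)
Your argument is correct and is essentially the paper's own (the paper offers no proof beyond the phrase ``by differentiation of Radon measures''). Two minor bookkeeping points: the application of the differentiation theorem also requires that $\mathcal{H}^{n-1}\lfloor_{\Sigma}$ be locally finite, which comes from the \emph{lower} density bound in Lemma \ref{lem 3.3} (for $x\in\Sigma$, $\mu(B_r(x))\geq r^{n-1}/C$, so a Vitali covering of $\Sigma\cap K$ gives $\mathcal{H}^{n-1}(\Sigma\cap K)\leq C'\mu(K_\delta)<\infty$); you invoke the lemma but cite only the upper bound, so make the use of the lower bound explicit. And you are right that with the paper's unnormalized definition $\Theta(x)=\lim_{r\to 0}\mu(B_r(x))/r^{n-1}$ the literal Radon--Nikodym density is $\Theta/\omega_{n-1}$; the corollary as stated suppresses that harmless constant.
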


Next we show that
\begin{lem}\label{lem 3.11}
$I-\tau=T_x\Sigma, \ \mathcal{H}^{n-1}$-a.e. on $\Sigma$.
\end{lem}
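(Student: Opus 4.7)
The plan is to pass the stationary condition \eqref{stationary condition singularly perturbed} to the $\varepsilon\to 0$ limit and then perform a blow-up at an $\mathcal{H}^{n-1}$-a.e.\ point of $\Sigma$, exploiting the rectifiability of $\Sigma$ together with Lebesgue differentiation for the bounded tensor field $\tau$ with respect to $\mu$.

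First I would pass \eqref{stationary condition singularly perturbed} to the limit. Using the weak convergence $\big(\tfrac{\varepsilon}{2}|\nabla u_\varepsilon|^2+\tfrac{1}{\varepsilon}W(u_\varepsilon)\chi_{\{u_\varepsilon>0\}}\big)dx\rightharpoonup\mu_1/2+\mu_2=\mu$ (from the energy partition corollary, which in fact yields $\mu=\mu_1$) together with $\varepsilon\nabla u_\varepsilon\otimes\nabla u_\varepsilon\,dx\rightharpoonup[\tau_{\alpha\beta}]\mu$, I obtain the first variation identity
\begin{equation*}
\int(\delta_{\alpha\beta}-\tau_{\alpha\beta})\,\partial_\alpha X_\beta\,d\mu=0\qquad \forall X\in C_c^\infty(B_1,\R^n),
\end{equation*}
so that the pair $(\mu,I-\tau)$ is a stationary generalised varifold.

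Next I would blow up at a point $x_0\in\Sigma$ chosen so that (i) the approximate tangent plane $T:=T_{x_0}\Sigma$ exists (holds $\mathcal{H}^{n-1}$-a.e.\ by the preceding rectifiability lemma); (ii) the density $\Theta(x_0)$ exists and lies in $[1/C,C]$, by Lemma~\ref{lem 3.3} and the limit monotonicity formula; and (iii) $x_0$ is a Lebesgue point of $\tau$ with respect to $\mu$, which holds $\mu$-a.e.\ since $\tau$ is bounded and $\mu$ is Radon. The rescalings $\mu^r(A):=r^{1-n}\mu(x_0+rA)$ and $\tau^r(y):=\tau(x_0+ry)$ then satisfy $\mu^r\rightharpoonup\Theta(x_0)\mathcal{H}^{n-1}\lfloor_T$ and $\tau^r\to\tau(x_0)$ in $L^1_{\mathrm{loc}}(d\mu^r)$. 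Since the first variation identity is scale-invariant, passing to the limit yields
\begin{equation*}
\int_T P_0:DX\,d\mathcal{H}^{n-1}=0\qquad \forall X\in C_c^\infty(\R^n,\R^n),
\end{equation*}
where $P_0:=I-\tau(x_0)$ is symmetric and positive semi-definite with $\mathrm{tr}\,P_0=n-1$.

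Finally, I would test with $X(y)=\phi(y)v$ for $v\in\R^n$ and $\phi(y)=\eta(y^T)\psi(y\cdot\nu)$, where $\nu\perp T$ is a unit normal, $y^T$ is the projection onto $T$, $\eta\in C_c^\infty(T)$, and $\psi\in C_c^\infty(\R)$. Tangential integration by parts on $T$ kills the $\nabla_T\phi$ contribution, since $P_0 v$ is a constant vector, leaving
\begin{equation*}
\psi'(0)\,(P_0v\cdot\nu)\int_T\eta\,d\mathcal{H}^{n-1}=0.
\end{equation*}
Choosing $\psi'(0)\neq 0$ and $\int_T\eta>0$ and letting $v$ range over $\R^n$, one concludes $P_0\nu=0$ by symmetry of $P_0$, i.e.\ $\tau(x_0)\nu=\nu$. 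Combined with $\tau(x_0)$ being positive semi-definite of trace one, this forces $\tau(x_0)=\nu\otimes\nu$, and hence $I-\tau(x_0)$ is exactly the orthogonal projection onto $T_{x_0}\Sigma$. The main obstacle will be the Lebesgue differentiation step (iii) for the matrix-valued $\tau$ and the rigorous passage of the scale-invariant stationary condition through the blow-up; both rest on the two-sided density estimate of Lemma~\ref{lem 3.3} together with the countable $(n-1)$-rectifiability of $\Sigma$.
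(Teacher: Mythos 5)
Your proof is correct, but it takes a genuinely different route from the paper's. The paper performs the blow-up at the level of the sequence $u_\varepsilon$ to reduce to the case $\Sigma=\R^{n-1}$ (following Lin--Rivi\`ere), and then proves Proposition~\ref{excess small}---that the tangential Dirichlet energy $\int_{\mathcal{C}_1}\varepsilon\sum_{\alpha<n}(\partial_\alpha u_\varepsilon)^2$ vanishes in the limit---by inserting $X=\varphi\psi\, x_n e_n$ into \eqref{stationary condition singularly perturbed}; this directly forces $\tau=e_n\otimes e_n$. You instead pass the first variation to the limit immediately to obtain the stationary generalised varifold $(\mu,I-\tau)$, and then blow up the limit object alone: at an $\mathcal{H}^{n-1}$-a.e.\ point where $\tau$ has a Lebesgue point (Besicovitch differentiation for Radon measures, using the two-sided density bound of Lemma~\ref{lem 3.3} to translate $\mu$-averages into $r^{1-n}$-scaled integrals) and $\Sigma$ has an approximate tangent plane, the scale-invariant first variation identity is transported through the blow-up to give a constant tensor $P_0=I-\tau(x_0)$ stationary over a plane; a one-dimensional normal perturbation then yields $P_0\nu=0$, and the constraints $\tau\geq 0$, $\operatorname{tr}\tau=1$, $\tau=\tau^{\mathsf T}$ pin down $\tau(x_0)=\nu\otimes\nu$. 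Your route is softer and entirely measure-theoretic, closer to the classical structure theory of stationary varifolds and to what Hutchinson--Tonegawa actually do. The paper's choice, however, has an ulterior motive: the quantitative excess vanishing of Proposition~\ref{excess small} is needed again in Section~4 as the input guaranteeing the smallness hypothesis \eqref{small excess 0} along the blow-down, so establishing Lemma~\ref{lem 3.11} via Proposition~\ref{excess small} makes that estimate available for free, whereas your argument would still leave it to be proved separately.
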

This can be proved as in \cite{H-T}. However, here we would like to
give a new proof, which uses several ideas from \cite{LR}.

As in \cite{LR}, to prove this lemma, we only need to consider the
special case where $\Sigma=\R^{n-1}$.

Notation: $\mathcal{C}_1:= B_1^{n-1}\times(-1,1)$.
\begin{prop}\label{excess small}
If $\Sigma=\R^{n-1}$, then
\[\lim_{\varepsilon\to0}\int_{\mathcal{C}_1}\varepsilon\sum_{\alpha=1}^{n-1}
\left(\frac{\partial u_{\varepsilon}}{\partial
x_\alpha}\right)^2=0.\]
\end{prop}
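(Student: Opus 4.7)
The plan is to pass to the limit $\varepsilon \to 0$ in the stationary identity \eqref{stationary condition singularly perturbed} and exploit the assumed flatness $\Sigma = \mathbb{R}^{n-1}$ through a careful choice of test vector fields. Using the weak-$\ast$ convergence of the matrix-valued measures together with the identity $\mu_1 = \mu$ (which follows from the energy partition $\mu_2 = \mu_1/2$ and the definition $\mu = \mu_1/2 + \mu_2$), for every $X \in C_0^\infty(\mathcal{C}_1, \mathbb{R}^n)$ the limit reads
$$\int (\operatorname{div} X)\, d\mu \;=\; \int \sum_{i,j}(\partial_j X_i)\,\tau_{ij}\, d\mu.$$

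The key idea is to test this identity with vector fields that vanish on $\Sigma = \{x_n = 0\}$ but have a prescribed normal derivative there, in order to isolate the $n$-th row of $\tau$ cleanly. Concretely, I would take $X(x) = \phi(x')\,\psi(x_n)\, e_k$, where $\phi \in C_0^\infty(B_1^{n-1})$ and $\psi(t) = t\,\chi(t)$ for some $\chi \in C_0^\infty(-1,1)$ with $\chi(0)=1$, so that $\psi(0)=0$ and $\psi'(0)=1$. For $k = \alpha < n$, every term in the limit identity carrying an undifferentiated $\psi$-factor vanishes against $\mu = \Theta\,\mathcal{H}^{n-1}\lfloor_\Sigma$, leaving only
$$\int_{B_1^{n-1}} \phi(x')\,\tau_{\alpha n}(x')\,\Theta(x')\, dx' \;=\; 0;$$
since $\Theta > 0$ on $\Sigma$ and $\phi$ is arbitrary, this forces $\tau_{\alpha n} = 0$ $\mu$-a.e.\ for every $\alpha < n$. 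Running the same computation with $k = n$ yields $\int \phi(1-\tau_{nn})\Theta\, dx' = 0$, and since $\tau$ is positive semidefinite with $\operatorname{tr}(\tau) = 1$ one has $1 - \tau_{nn} \geq 0$, so $\tau_{nn} = 1$ $\mu$-a.e. The trace identity then gives $\tau_{\alpha\alpha} = 0$ for every $\alpha < n$, and the elementary inequality $|\tau_{\alpha\beta}|^2 \leq \tau_{\alpha\alpha}\tau_{\beta\beta}$ valid for PSD $\tau$ propagates this to the entire $(n-1)\times(n-1)$ tangential block. In summary, $\tau = e_n \otimes e_n$ $\mu$-a.e.

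With this in hand, for any $\eta \in C_c^\infty(\mathcal{C}_1)$ the weak-$\ast$ convergence of $\varepsilon \nabla u_\varepsilon \otimes \nabla u_\varepsilon$ yields
$$\int \eta\, \varepsilon \sum_{\alpha < n}(\partial_\alpha u_\varepsilon)^2\,dx \;\longrightarrow\; \int \eta \sum_{\alpha<n}\tau_{\alpha\alpha}\,d\mu = 0.$$
Since $\mu$ is concentrated on $\Sigma$ and $\partial\mathcal{C}_1 \cap \Sigma$ is an $\mathcal{H}^{n-1}$-null subset of $\Sigma$, an exhaustion by cutoffs $\eta_k \nearrow \chi_{\mathcal{C}_1}$ combined with the uniform energy bound \eqref{energy bound} upgrades this to the claim. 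The main technical obstacle is the rigorous justification of the passage to the limit in the stationary condition, which requires controlling the convergence of matrix-valued Radon measures against smooth test fields; conceptually, the crucial insight is the choice $\psi(t) = t\chi(t)$ vanishing to first order on $\Sigma$, which isolates the normal component of $\tau$ in a single step, without invoking Allard-type regularity for varifolds.
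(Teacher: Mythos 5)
Your proof is correct, and the key idea is the same as the paper's: test the stationary identity against a vector field of the form $\phi(x')\psi(x_n)e_n$ where $\psi$ vanishes to first order at $x_n=0$ (your $\psi(t)=t\chi(t)$ is precisely the paper's $\psi\,x_n$, with the roles of $\psi$ and $\chi$ swapped), and exploit that $\mu$ is carried by $\{x_n=0\}$. The difference is in the order of operations. The paper, following \cite[Lemma 4.6]{Wang}, plugs $X=\varphi\psi x_n e_n$ into the \emph{pre-limit} stationary condition \eqref{stationary condition singularly perturbed} and uses energy equipartition, concentration, and Cauchy--Schwarz to kill the cross terms directly at the level of the sequence $u_\varepsilon$. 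You instead pass to the limit stationarity identity for the varifold first, read off $\tau_{nn}=1$ $\mu$-a.e., and then conclude. In effect you are establishing the conclusion of Lemma \ref{lem 3.11} (in the special flat case) first and deriving the proposition from it, rather than the other way around as in the paper's stated chain of implications; this is logically fine since you derive $\tau_{nn}=1$ directly from the stationary condition, not from the proposition being proved. Your approach buys a cleaner separation of ``limit structure'' from ``convergence'' and avoids the Cauchy--Schwarz step; the paper's pre-limit argument avoids committing to the full varifold machinery.

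Two small remarks. First, the tests with $k=\alpha<n$ are actually superfluous: once $\tau_{nn}=1$, the constraint $\operatorname{tr}\tau=1$ together with positive semidefiniteness forces $\tau_{\alpha\alpha}=0$, and $|\tau_{\alpha\beta}|^2\leq\tau_{\alpha\alpha}\tau_{\beta\beta}$ (and likewise $|\tau_{\alpha n}|^2\leq\tau_{\alpha\alpha}\tau_{nn}$) then annihilates the rest of the matrix, giving $\tau=e_n\otimes e_n$ without ever invoking the $e_\alpha$-directed fields. Second, in the final step the exhaustion should run in the opposite direction: choose cutoffs $\eta\geq\chi_{\mathcal{C}_1}$ compactly supported in a slightly larger cylinder, so that by nonnegativity of the integrand $\int_{\mathcal{C}_1}\varepsilon\sum_{\alpha<n}(\partial_\alpha u_\varepsilon)^2\leq\int\eta\,\varepsilon\sum_{\alpha<n}(\partial_\alpha u_\varepsilon)^2\to\int\eta\sum_{\alpha<n}\tau_{\alpha\alpha}\,d\mu=0$; an exhaustion by cutoffs $\eta_k\nearrow\chi_{\mathcal{C}_1}$ from the inside gives convergence for each fixed $k$ but not the desired estimate on the full cylinder.
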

This clearly implies Lemma \ref{lem 3.11} in this special case. This
proposition can be proved as in \cite[Lemma 4.6]{Wang}. This proof
is by choosing $X=\varphi\psi x_n e_n$ in the stationary condition
\eqref{stationary condition singularly perturbed}, where $\varphi\in
C_0^\infty(B_1^{n-1})$ and $\psi\in C_0^\infty((-1,1))$. For another
proof using the monotonicity formula, see the derivation of
\cite[Eq. (2.11)]{L}.

 With
the help of Proposition \ref{excess small}, we can get the following
quantization result for $\Theta(x)$.
\begin{thm}\label{quantization}
$\Theta(x)/\sigma_0$ equals positive integer
$\mathcal{H}^{n-1}$-a.e. on $\Sigma$.
\end{thm}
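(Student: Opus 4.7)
The plan is to argue at $\mathcal{H}^{n-1}$-a.e.\ $x_0\in\Sigma$ where the approximate tangent plane exists, and then, by composing a spatial blow-up at $x_0$ with a diagonal choice of $\varepsilon_i\to 0$ and radii $r_i\to 0$, reduce to the special geometric situation of Proposition \ref{excess small}: $\Sigma=\R^{n-1}$ and $T_{x_0}\Sigma=\{x_n=0\}$. In that setup, Proposition \ref{excess small} tells us that the tangential part of the gradient is negligible,
\[
\lim_{\varepsilon\to 0}\int_{\mathcal{C}_1}\varepsilon\sum_{\alpha=1}^{n-1}\Big(\frac{\partial u_\varepsilon}{\partial x_\alpha}\Big)^2=0,
\]
so essentially all of the energy is carried by $\partial_n u_\varepsilon$. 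Consequently, on a generic vertical line $\ell_{y'}:=\{x'=y'\}\subset\mathcal{C}_1$ the restriction $v_\varepsilon(t):=u_\varepsilon(y',t)$ carries almost all of the nearby energy and is, up to vanishing errors, a stationary solution of the 1D singularly perturbed one-phase free boundary problem.

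Next I would implement a slicing argument. By Fubini, for a.e.\ $y'\in B_{1/2}^{n-1}$ one has an $\varepsilon$-uniform bound on
\[
\int_{-1}^{1}\Big(\frac{\varepsilon}{2}|v_\varepsilon'|^2+\frac{1}{\varepsilon}W(v_\varepsilon)\chi_{\{v_\varepsilon>0\}}\Big)\,dt,
\]
and by combining Lemmas \ref{lem 3.2} and \ref{lem 3.4} the endpoint values $v_\varepsilon(\pm 1)$ are exponentially close in $\varepsilon$ to $\{0,1\}$. On such a slice I would count $N_\varepsilon(y')$ as the number of connected components of $\{t\in(-1,1):\gamma<v_\varepsilon(t)<1-\gamma\}$; the clearing-out property (Proposition \ref{clearing out}) together with the exponential decay in Lemma \ref{lem 3.2} makes $N_\varepsilon(y')$ well-defined and uniformly bounded for small $\varepsilon$. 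The crucial step is the sharp 1D lower bound
\[
\liminf_{\varepsilon\to 0}\int_{\ell_{y'}}\Big(\frac{\varepsilon}{2}|v_\varepsilon'|^2+\frac{1}{\varepsilon}W(v_\varepsilon)\chi_{\{v_\varepsilon>0\}}\Big)\,dt\ \geq\ N(y')\,\sigma_0,
\]
where $N(y')=\lim_\varepsilon N_\varepsilon(y')$; this comes from the fact that after rescaling each transition region converges to a translate of the 1D profile $g$ from \eqref{1d profile}, which contributes exactly $\sigma_0$ to the energy.

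Finally I would integrate in $y'$ and compare with the sharp upper bound supplied by the monotonicity formula, which yields $\Theta(x_0)\,\mathcal{L}^{n-1}(B_{1/2}^{n-1})$ as the limiting total energy in $\mathcal{C}_{1/2}$. Together with the saturation of the Modica inequality in the limit (i.e.\ $\mu_1/2=\mu_2$), this forces equality slice by slice, and by translation invariance of the blown-up measure along $T_{x_0}\Sigma=\R^{n-1}$ it forces $N(y')$ to be constant a.e. Hence $\Theta(x_0)=N\sigma_0$ for some positive integer $N$. The main obstacle I expect is the precise 1D analysis on each slice: one must exclude oscillatory or ``sticky'' behavior and show that every transition is asymptotically modeled by $g$ up to translation. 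Following the Lin--Rivi\`ere strategy, I would exploit the pointwise 1D Pohozaev-type identity $\frac{\varepsilon}{2}(v_\varepsilon')^2-\frac{1}{\varepsilon}W(v_\varepsilon)\to 0$ obtained by testing \eqref{stationary condition singularly perturbed} against vertical vector fields (using Proposition \ref{excess small} to dispose of horizontal derivative terms), since this collapses $v_\varepsilon$ to a clean concatenation of translates of $g$ and makes the energy computation rigorous.
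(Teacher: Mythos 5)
Your overall strategy—reducing to the tangent cone $\Sigma=\R^{n-1}$, slicing along the $x_n$-direction, counting transitions $N_\varepsilon(y')$, and matching each transition with a translate of the $1$D profile $g$ to produce $\sigma_0$ per transition—is exactly the paper's strategy. But there is a genuine gap in the middle of your argument, and it is precisely where the paper brings in a tool you never mention.

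Your chain of reasoning gives a one-sided estimate: Fatou plus the $1$D lower bound on good slices yields $\int N(y')\,\sigma_0\,dy'\leq \Theta\,|B^{n-1}_{1/2}|$, and you then assert ``equality slice by slice'' and ``translation invariance of the blown-up measure along $T_{x_0}\Sigma$'' to force $N(y')$ constant. Neither step works as stated. The blown-down functions $u_\varepsilon$ are not translation-invariant along $\R^{n-1}$ (only the limit \emph{measure} $\mu=\Theta\,\mathcal{H}^{n-1}\lfloor_{\R^{n-1}}$ is), and an integrated upper bound plus a pointwise $\liminf$ lower bound does not give a.e.\ equality of the slice energies. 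What is actually needed is that the slice energy
\[
f_\varepsilon(x'):=\int_{-1}^{1}\Big(\tfrac{\varepsilon}{2}|\nabla u_\varepsilon|^2+\tfrac1\varepsilon W(u_\varepsilon)\chi_{\{u_\varepsilon>0\}}\Big)\,dx_n
\]
converges to the constant $\Theta$ \emph{strongly} in $L^1_{loc}(B_1^{n-1})$, so that, via the weak-$L^1$ estimate for the Hardy--Littlewood maximal function, one can select good slices on which $r^{1-n}\int_{\mathcal{C}_r}e_\varepsilon\to\Theta$ exactly (not just $\geq N\sigma_0$). The paper obtains this strong convergence by showing $\partial_i\tilde f_\varepsilon=\sum_j\partial_j A^\varepsilon_{ij}+g^\varepsilon_i$ with $A^\varepsilon_{ij},g^\varepsilon_i\to 0$ in $L^1_{loc}$ (a consequence of Proposition \ref{excess small}) and then invoking \emph{Allard's strong constancy lemma} \cite{Allard}. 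This is the pivot of the whole proof, and it is absent from your proposal. Once one has it, $\Theta=N_\varepsilon\sigma_0+o_b(1)+O(\delta)$ on a good slice, and integrality follows because $N_\varepsilon$ is an integer; no translation-invariance of the sections is used or available. Your emphasis on the $1$D Pohozaev identity and the ``sticky transition'' issue, while not wrong, is not the crux—those details are handled by the combination of Lemma \ref{lem 3.15} and the good-slice selection, and the real obstacle you should have identified is turning weak convergence of $f_\varepsilon$ into strong convergence.
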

To prove this theorem, we need a lemma.
\begin{lem}\label{lem 3.15}
For any $\delta>0$, there exists a $b\in(0,1)$ such that, for all
$\varepsilon$ small,
\[\int_{\mathcal{C}_1\cap\{u_\varepsilon>1-b\}}\frac{\varepsilon}{2}|\nabla u_\varepsilon|^2
+\frac{1}{\varepsilon}W(u_\varepsilon)\leq\delta.\]
\end{lem}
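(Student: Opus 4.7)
The plan is to bound the energy on $A_\varepsilon := \{u_\varepsilon > 1-b\}$ by combining a pointwise exponential decay of $1-u_\varepsilon$ (coming from the equation and (W3)) with an integrated tubular-neighborhood estimate. Throughout I restrict to $b \in (0,1-\gamma)$ so that on $A_\varepsilon$ we have $u_\varepsilon > \gamma$ and hypothesis (W3) is active.

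First I establish a pointwise decay estimate. Setting $w_\varepsilon := 1 - u_\varepsilon$, the bound $-W'(u) \geq \kappa(1-u)$ on $\{u \geq \gamma\}$ (from (W3)) together with $\varepsilon^2\Delta u_\varepsilon = W'(u_\varepsilon)$ gives $\Delta w_\varepsilon \geq (\kappa/\varepsilon^2) w_\varepsilon$ on $A_\varepsilon$, with $w_\varepsilon \leq b$ and $w_\varepsilon = b$ on $\partial A_\varepsilon$. Comparing $w_\varepsilon$ against the explicit radial solution of $(\Delta - \kappa/\varepsilon^2)v = 0$ with boundary value $b$ (built from the modified Bessel function $I_0$, or a hyperbolic-cosine barrier) in any ball $B_r(x)\subset A_\varepsilon$ and taking $r = d(x) := \mathrm{dist}(x, \{u_\varepsilon \leq 1-b\})$ yields
\[
w_\varepsilon(x) \leq C\,b\,\exp\!\bigl(-\sqrt{\kappa}\,d(x)/\varepsilon\bigr).
\]

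Second, Taylor's theorem with $W(1)=W'(1)=0$ and $W''(1)\geq\kappa$ gives $W(u) \leq C(1-u)^2$ for $u \in [\gamma,1]$, and so Modica's inequality (Proposition~2.2) yields the pointwise energy-density bound
\[
\tfrac{\varepsilon}{2}|\nabla u_\varepsilon|^2 + \tfrac{1}{\varepsilon}W(u_\varepsilon) \;\leq\; \tfrac{2}{\varepsilon}W(u_\varepsilon) \;\leq\; \tfrac{C}{\varepsilon}w_\varepsilon^2 \;\leq\; \tfrac{Cb^2}{\varepsilon}e^{-2\sqrt{\kappa}\,d(x)/\varepsilon}\quad\text{on } A_\varepsilon.
\]
Integrating on $A_\varepsilon\cap\mathcal{C}_1$ and using the layer-cake identity together with the co-area bound $|\{d\leq t\}\cap\mathcal{C}_1| \leq Ct\,\mathcal{H}^{n-1}(\{u_\varepsilon=1-b\}\cap\mathcal{C}_2)$ (valid up to a scale $\sim 1$) gives
\[
\int_{A_\varepsilon\cap\mathcal{C}_1}\Bigl(\tfrac{\varepsilon}{2}|\nabla u_\varepsilon|^2 + \tfrac{1}{\varepsilon}W(u_\varepsilon)\Bigr)dx \;\leq\; Cb^2\,\mathcal{H}^{n-1}\bigl(\{u_\varepsilon=1-b\}\cap\mathcal{C}_2\bigr) + O\bigl(b^2 e^{-c/\varepsilon}\bigr).
\]

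The main obstacle is obtaining a uniform-in-$\varepsilon$ bound $M = M(b)$ on the level-set perimeter $\mathcal{H}^{n-1}(\{u_\varepsilon=1-b\}\cap\mathcal{C}_2)$. I would resolve this by using the density upper bound $\mu(B_r)\leq Cr^{n-1}$ (Lemma~3.3) together with the weak convergence $\mu_\varepsilon\rightharpoonup \mu=\Theta\,\mathcal{H}^{n-1}\lfloor_{\Sigma}$: for all but countably many $b\in(0,1-\gamma)$, the level sets $\{u_\varepsilon=1-b\}\cap\mathcal{C}_2$ converge in the sense of varifolds to a finite-multiplicity copy of $\Sigma\cap\mathcal{C}_2$, and in particular their $(n-1)$-measures stay uniformly bounded for all small $\varepsilon$. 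Fixing such a $b$ (not in the exceptional set) small enough that $CMb^2\leq \delta/2$, and then taking $\varepsilon$ small, gives the desired inequality.
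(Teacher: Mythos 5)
Your overall shape (pointwise exponential decay of $1-u_\varepsilon$ plus an integrated neighborhood estimate) is a natural first attempt, and the first two steps are fine: the comparison argument and the bound $W(u)\leq C(1-u)^2$ together with Modica's inequality do give the pointwise density estimate. The gap is in the integration step, and it is genuine.

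First, the uniform-in-$\varepsilon$ bound on $\mathcal{H}^{n-1}\bigl(\{u_\varepsilon=1-b\}\cap\mathcal{C}_2\bigr)$ does \emph{not} follow from the convergence $\mu_\varepsilon\rightharpoonup\mu$. That convergence is for the diffuse energy measure, not for level sets of $u_\varepsilon$ at a fixed height; nothing in the Hutchinson--Tonegawa machinery says that $\{u_\varepsilon = 1-b\}$ converges as varifolds with bounded mass. What the co-area formula actually gives is $\int_0^{1}\sqrt{2W(s)}\,\mathcal{H}^{n-1}(\{u_\varepsilon = s\}\cap\mathcal{C}_2)\,ds\le\int_{\mathcal{C}_2} e_\varepsilon(u_\varepsilon)\le C$, and since $\sqrt{W(s)}\sim 1-s$ near $s=1$, averaging over $s\in(1-2b,1-b)$ yields at best a value $b'\in(b,2b)$ with $\mathcal{H}^{n-1}(\{u_\varepsilon=1-b'\}\cap\mathcal{C}_2)\lesssim b^{-2}$. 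Plugging this into your final bound gives $C(b')^2\cdot b^{-2}=O(1)$, not $o_b(1)$: the argument does not close. There is also a secondary issue: the tubular estimate $|\{d\le t\}\cap\mathcal{C}_1|\le C\,t\,\mathcal{H}^{n-1}(\{u_\varepsilon=1-b\}\cap\mathcal{C}_2)$ is only valid up to the curvature scale of the level set, which is of order $\varepsilon$; but the exponential weight $e^{-ct/\varepsilon}$ only becomes negligible at $t\sim\varepsilon|\log\varepsilon|$, beyond that scale, so even the kernel estimate needs justification.

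The paper takes a different route that avoids this entirely: rather than decay of $1-u_\varepsilon$, one uses the strict convexity of $W$ near $1$ to show that the energy density itself is a subsolution, namely
\[
\Delta\Bigl[\tfrac{\varepsilon}{2}|\nabla u_\varepsilon|^2 + \tfrac1\varepsilon W(u_\varepsilon)\Bigr]\ \ge\ \frac{\kappa}{\varepsilon^2}\Bigl[\tfrac{\varepsilon}{2}|\nabla u_\varepsilon|^2 + \tfrac1\varepsilon W(u_\varepsilon)\Bigr]\quad\text{in }\{u_\varepsilon>1-b_1\},
\]
for some fixed small $b_1$. Testing this against $\phi(u_\varepsilon)^2\eta^2$, with $\phi$ a cutoff in the $u$-variable supported in $(1-2b,1]$ and $\eta$ a spatial cutoff, produces a Caccioppoli-type inequality of the form
\[
\int_{\{u_\varepsilon>1-b\}\cap\mathcal{C}_1} e_\varepsilon(u_\varepsilon)\ \le\ C\int_{\{1-2b<u_\varepsilon<1-b\}\cap\mathcal{C}_{3/2}} e_\varepsilon(u_\varepsilon)\ +\ C\varepsilon^2,
\]
with $C$ independent of $b$. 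Hole-filling and iterating over dyadic levels $b_k=2^{-k}b_1$ (with suitably shrinking spatial cutoffs) then yields geometric decay of $\int_{\{u_\varepsilon>1-b_k\}\cap\mathcal{C}_1}e_\varepsilon$ in $k$, modulo an $O_k(\varepsilon^2)$ error, which gives the lemma directly. This dispenses with any control on level-set measures or on tubular neighborhoods. If you want to pursue your own plan, some such iteration is unavoidable: a single good slice cannot overcome the $b^{-2}$ growth in the perimeter.
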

The proof uses the strict convexity of $W$ near $1$, in particular,
\begin{equation}\label{equation for gradient estiamtes}
\Delta\left[\frac{\varepsilon}{2}|\nabla
u_\varepsilon|^2+\frac{1}{\varepsilon}W(u_\varepsilon)\right]\geq\frac{\kappa}{\varepsilon^2}\left[\frac{\varepsilon}{2}|\nabla
u_\varepsilon|^2+\frac{1}{\varepsilon}W(u_\varepsilon)\right],\ \ \
\mbox{in}\ \ \{u_\varepsilon>1-b_1\},
\end{equation}
where $b_1>0$ is small. For more details, see \cite[Corollary
6.4]{Wang}.

\begin{proof}[Proof of Theorem \ref{quantization}]
We still need only to consider the special case where
$\Sigma=\R^{n-1}$ and
$\mu=\Theta\mathcal{H}^{n-1}\lfloor_{\R^{n-1}}$, with $\Theta$ a
constant. We want to prove that $\Theta/\sigma_0$ is a positive
integer.

For $x^\prime\in B_1^{n-1}$, let
\[f_\varepsilon(x^\prime):=\int_{-1}^1\frac{\varepsilon}{2}|\nabla u_\varepsilon(x^\prime,x_n)|^2
+\frac{1}{\varepsilon}W(u_\varepsilon(x^\prime,x_n))\chi_{\{u_\varepsilon>0\}}dx_n.\]
By \eqref{energy bound}, $f_\varepsilon$ are uniformly bounded in
$L^1(B_1^{n-1})$. By the convergence of $\varepsilon|\nabla
u_\varepsilon|^2dx$ etc., $f_\varepsilon$ converges to $\Theta$
weakly in $L^1(B_1^{n-1})$.

Fix a $\psi\in C_0^\infty((-1,1))$ such that $\psi\equiv 1$ in
$(-1/2,1/2)$. Let
\[\tilde{f}_\varepsilon(x^\prime):=\int_{-1}^1\left[\frac{\varepsilon}{2}|\nabla u_\varepsilon|^2
+\frac{1}{\varepsilon}W(u_\varepsilon)\chi_{\{u_\varepsilon>0\}}\right]\psi(x_n)dx_n.\]
By Lemma \ref{lem 3.2} and Lemma \ref{lem 3.4},
\begin{equation}\label{3.7}
\int_{B_1^{n-1}}\big|f_\varepsilon-\tilde{f}_\varepsilon\big|\leq
Ce^{-\frac{1}{C\varepsilon}}.
\end{equation}

By substituting $X=\varphi\psi e_i$ with $\varphi\in
C_0^\infty(B_1^{n-1})$, we see
\[\frac{\partial \tilde{f}_\varepsilon}{\partial x_i}=\sum_{j=1}^n\frac{\partial}{\partial x_j}A_{ij}^\varepsilon
+g_\varepsilon^i,\quad \forall 1\leq i\leq n-1,\] where
\[A_{ij}^\varepsilon:=\int_{-1}^1\varepsilon\frac{\partial u_\varepsilon}{\partial x_i}
\frac{\partial u_\varepsilon}{\partial x_j}\psi(x_n)dx_n,\]
\[g_\varepsilon^i=\int_{-1}^1\varepsilon\frac{\partial u_\varepsilon}{\partial x_i}
\frac{\partial u_\varepsilon}{\partial x_n}\psi^\prime(x_n)dx_n.\]

By Proposition \ref{excess small} and Cauchy inequality, for all
$1\leq i\leq n-1$ and $1\leq j\leq n$, $A_{ij}^\varepsilon$ and
$g^i_\varepsilon$ converges to $0$ in $L^1_{loc}(B_1^{n-1})$. Then
by Allard's strong constancy lemma \cite{Allard},
$\tilde{f}_\varepsilon$ converges to $\Theta$ in
$L^1_{loc}(B_1^{n-1})$, which also holds for $f_\varepsilon$ by
\eqref{3.7}.

By Lemma \ref{excess small} and the weak $L^1$ estimate for
Hardy-Littlewood maximal function, there exists a set
$E^1_\varepsilon$ with $|B_{1/2}^{n-1}\setminus
E^1_\varepsilon|<|B_{1/2}^{n-1}|/4$, such that,
\begin{equation}\label{3.1}
\lim_{\varepsilon\to0}\sup_{r\in(0,1/2)}r^{1-n}\int_{\mathcal{C}_r(x^\prime)}\varepsilon\sum_{\alpha=1}^{n-1}
\left(\frac{\partial u_{\varepsilon}}{\partial x_\alpha}\right)^2=0,
\quad \forall x^\prime\in E^1_\varepsilon.
\end{equation}

By Lemma \ref{lem 3.15}, for any $\delta>0$, there exists a
$b\in(0,1)$ and a set $E^2_\varepsilon$ with
$|B_{1/2}^{n-1}\setminus E^2_\varepsilon|<|B_{1/2}^{n-1}|/4$, such
that
\begin{equation}\label{3.2}
\limsup_{\varepsilon\to0}\sup_{r\in(0,1/2)}r^{1-n}\int_{\mathcal{C}_r(x^\prime)
\cap\{u_\varepsilon>1-b\}}\frac{\varepsilon}{2}|\nabla
u_\varepsilon|^2 +W(u_\varepsilon)\leq C\delta, \quad \forall
x^\prime\in E^2_\varepsilon.
\end{equation}

By applying the weak $L^1$ estimate for Hardy-Littlewood maximal
function to $|f_\varepsilon-\Theta|$, we get a set $E^3_\varepsilon$
with $|B_{1/2}^{n-1}\setminus E^3_\varepsilon|<|B_{1/2}^{n-1}|/4$,
such that
\begin{equation}\label{3.3}
\lim_{\varepsilon\to0}\sup_{r\in(0,1/2)}r^{1-n}\int_{B_r(x^\prime)}|f_\varepsilon(x^\prime)-\Theta|=0,
\quad \forall x^\prime_\varepsilon\in E^3_\varepsilon.
\end{equation}

Now choose $x^\prime_\varepsilon\in E^1_\varepsilon\cap
E^2_\varepsilon\cap E^3_\varepsilon$. For any
$x_\varepsilon:=(x^\prime_\varepsilon,
x^n_\varepsilon)\in\partial\{u_\varepsilon>0\}$,
\[v^\varepsilon(x):=u_\varepsilon(x_\varepsilon+\varepsilon x)\]
converges to a limit $v^\infty$ in $C_{loc}(\R^n)\cap
H^1_{loc}(\R^n)$ (by the a priori estimates in \cite{AC}). By
\eqref{3.1}, $v^\infty$ depends only on the $x_n$ variable, hence
equals the one dimensional profile $g$. Thus for all $\varepsilon>0$
small, $v^\varepsilon(0,x_n)>0$ in $(0,g^{-1}(b))$ and
\begin{equation}\label{3.6}
\lim_{\varepsilon\to0}\int_0^{g^{-1}(b)}\frac{1}{2}\left(\frac{\partial
v^\varepsilon}{\partial
x_n}\right)^2+W(v^\varepsilon)=\int_0^{g^{-1}(b)}\frac{1}{2}\left(g^\prime\right)^2+W(g)=\sigma_0+o_b(1).
\end{equation}

Assume that
$\Pi^{-1}(x^\prime_\varepsilon)\cap\partial\{u_\varepsilon>0\}$
consists $N_\varepsilon$ points, $t^i_\varepsilon$, $1\leq i\leq
N_\varepsilon$. By the analysis above, for all $\varepsilon$ small,
$u_\varepsilon>1-b$ or $u_\varepsilon=0$ outside
\[G_\varepsilon:=B_\varepsilon^{n-1}(x^\prime_\varepsilon)\times\cup_{1\leq i\leq
N_\varepsilon}(t^i_\varepsilon-M\varepsilon,t^i_\varepsilon+M\varepsilon),\]
where $M$ is a constant depending only on $b$.

Then
\begin{eqnarray*}
&&\varepsilon^{1-n}\int_{C_\varepsilon(x^\prime_\varepsilon)}e_\varepsilon(u_\varepsilon)\\
&=&\varepsilon^{1-n}\int_{G_\varepsilon}e_\varepsilon(u_\varepsilon)+\varepsilon^{1-n}\int_{C_\varepsilon(x^\prime_\varepsilon)\setminus
G_\varepsilon}e_\varepsilon(u_\varepsilon)\\
&=&\sum_{i=1}^{N_\varepsilon}\varepsilon^{1-n}\int_{B_\varepsilon^{n-1}(x^\prime_\varepsilon)}
\int_{-M}^M\left(\frac{1}{2}|\nabla
v^\varepsilon|^2+W(v^\varepsilon)\chi_{\{v^\varepsilon>0\}}\right)+O(\delta)\\
&=&N_\varepsilon
\left(\sigma_0+o_\varepsilon(1)+o_b(1)\right)+O(\delta).\quad
\mbox{(by \eqref{3.6})}
\end{eqnarray*}
On the other hand, by \eqref{3.3},
\[\lim_{\varepsilon\to0}
\varepsilon^{1-n}\int_{C_\varepsilon(x^\prime_\varepsilon)}e_\varepsilon(u_\varepsilon)=\Theta.\]
Hence
\[\lim_{\varepsilon\to0}N_\varepsilon=\frac{\Theta}{\sigma_0}+
o_b(1)+O(\delta).\] The last two terms can be made arbitrarily
small. Then because $N_\varepsilon$ is a positive integer, it must
be constant for all $\varepsilon$ small, which also equals
$\Theta/\sigma_0$.
\end{proof}

Define the varifold $V$ by
\[<V,\varphi>:=\int_{\Sigma}\varphi(x,T_x\Sigma)\Theta(x)d\mathcal{H}^{n-1},\]
for $\varphi\in C_0^\infty(B_1\times \mathbb{RP}^n)$. (We view the
space of hyperplanes of $\R^n$ as the projective space
$\mathbb{RP}^n$.) By passing to the limit in the stationary
condition for $u_\varepsilon$ and noting Lemma \ref{lem 3.11}, we
obtain
\begin{lem}
$V$ is stationary.
\end{lem}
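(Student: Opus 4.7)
The plan is to pass to the limit $\varepsilon \to 0$ in the stationary condition \eqref{stationary condition singularly perturbed} for $u_\varepsilon$, and then recognize the resulting identity as the first variation formula for $V$. Fix a test vector field $X \in C_0^\infty(B_1, \R^n)$. Since $X$ is smooth with compact support, $\text{div}\, X$ is an admissible scalar test function and $DX$ is an admissible matrix-valued test function for the weak-$*$ convergences of Radon measures already established.

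For the first term in \eqref{stationary condition singularly perturbed}, the weak convergences $\tfrac{\varepsilon}{2}|\nabla u_\varepsilon|^2 \, dx \rightharpoonup \tfrac{1}{2}\mu_1$ and $\tfrac{1}{\varepsilon} W(u_\varepsilon)\chi_{\{u_\varepsilon > 0\}} \, dx \rightharpoonup \mu_2$ give
\[
\int \left( \tfrac{\varepsilon}{2}|\nabla u_\varepsilon|^2 + \tfrac{1}{\varepsilon} W(u_\varepsilon)\chi_{\{u_\varepsilon > 0\}} \right) \mathrm{div}\, X \longrightarrow \int \mathrm{div}\, X \, d\mu,
\]
since $\mu = \tfrac{1}{2}\mu_1 + \mu_2$. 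For the second term, the weak convergence $\varepsilon \nabla u_\varepsilon \otimes \nabla u_\varepsilon \, dx \rightharpoonup [\tau_{\alpha\beta}]\mu_1$ yields
\[
\int \varepsilon \, DX(\nabla u_\varepsilon, \nabla u_\varepsilon) \longrightarrow \int \langle DX, \tau\rangle \, d\mu_1 = \int \langle DX, \tau \rangle \, d\mu,
\]
where in the last equality I use the energy partition relation $\mu_1 = 2\mu_2$, which together with $\mu = \tfrac{1}{2}\mu_1 + \mu_2$ gives $\mu_1 = \mu$.

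Combining these two limits, the stationary condition passes to
\[
\int \bigl[ \mathrm{div}\, X - \langle DX, \tau \rangle \bigr] \, d\mu = 0.
\]
By Lemma \ref{lem 3.11}, $\mathcal{H}^{n-1}$-a.e.\ on $\Sigma$ we have $I - \tau = P_{T_x\Sigma}$, so $\langle DX, \tau\rangle = \langle DX, I - P_{T_x\Sigma}\rangle = \mathrm{div}\, X - \mathrm{div}_{T_x\Sigma} X$. Using $\mu = \Theta \, \mathcal{H}^{n-1}\lfloor_\Sigma$, the identity becomes
\[
\int_\Sigma \mathrm{div}_{T_x\Sigma} X(x)\, \Theta(x) \, d\mathcal{H}^{n-1}(x) = 0,
\]
which is exactly the statement $\langle \delta V, X\rangle = 0$, i.e., $V$ is stationary.

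The routine step is the passage to the limit, which is immediate from the weak convergences of the Radon measures. The only genuine input is the algebraic identification in the last paragraph, which uses Lemma \ref{lem 3.11} to convert the Euclidean divergence into the tangential divergence along $T_x\Sigma$; this is the reason why the proof of Lemma \ref{lem 3.11} (in particular Proposition \ref{excess small}) is the substantive content of the Hutchinson--Tonegawa theory, whereas the present lemma becomes a bookkeeping consequence once that alignment of $\tau$ with the normal direction is available.
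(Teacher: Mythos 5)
Your proof is correct and follows exactly the route the paper intends: pass to the limit in the stationary condition \eqref{stationary condition singularly perturbed} using the established weak-$*$ convergences, invoke the energy partition $\mu_1 = 2\mu_2 = \mu$, and then use Lemma \ref{lem 3.11} to identify $\mathrm{div}\,X - \langle DX,\tau\rangle$ with the tangential divergence $\mathrm{div}_{T_x\Sigma} X$, which turns the limiting identity into $\delta V(X)=0$. The paper itself only gives a one-line indication (``by passing to the limit in the stationary condition and noting Lemma \ref{lem 3.11}''), and your write-up fills in precisely those steps.
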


Finally, we would like to compare this convergence theory with the
$\Gamma$-convergence theory. Let
\[w_\varepsilon(x):=\Phi(u_\varepsilon(x))=\int_0^{u_\varepsilon(x)}\sqrt{2W(t)}dt.\]
Then
\begin{eqnarray*}
\int_{B_1}|\nabla
w_\varepsilon|&=&\int_{B_1}\sqrt{2W(u_\varepsilon)}|\nabla
u_\varepsilon|\\
&\leq&\int_{B_1}\frac{1}{2}|\nabla
u_\varepsilon|^2+\frac{1}{\varepsilon}W(u_\varepsilon)\chi_{\{u_\varepsilon>0\}}\leq
C.
\end{eqnarray*}
Since $0\leq w_\varepsilon\leq \int_0^1\sqrt{2W(t)}dt$, it is
uniformly bounded in $BV_{loc}(B_1)$. Then up to a subsequence
$w_\varepsilon$ converges in $L^1_{loc}(B_1)$ to a function
$w_\infty\in BV_{loc}(B_1)$.

By extending $\Phi$ suitably to (-1,1), there exists a continuous
inverse of it. Then $u_\varepsilon=\Phi^{-1}(w_\varepsilon)$
converges to $\Phi^{-1}(w_\infty)$ in $L^1_{loc}(\R^n)$. Since
\[\int_{B_1}W(u_\varepsilon)\chi_{\{u_\varepsilon>0\}}\leq C\varepsilon,\]
$u_\varepsilon\to0$ or $1$ a.e. in $B_1$. Hence there exists a
measurable set $\Omega_\infty$ such that
\[u_\varepsilon\to\chi_{\Omega_\infty},\quad \mbox{in}\ L^1_{loc}(\R^n).\]
Because $w_\infty=(\int_0^1\sqrt{2W(t)}dt)\chi_{\Omega_\infty}$,
$\chi_{\Omega_\infty}\in BV_{loc}(\R^n)$.

For minimizers, combining the above two approaches gives
\begin{prop}
If $u_\varepsilon$ are minimizers, then
$\Sigma=\partial\Omega_\infty$ and
$\mu=\sigma_0\mathcal{H}^{n-1}\lfloor_{\partial\Omega_\infty}$.
Moreover, $\Omega_\infty$ is a set with minimal perimeter.
\end{prop}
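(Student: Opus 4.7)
The plan is to use the BV/$\Gamma$-convergence framework set up just before the statement, together with the pointwise Young inequality
\[
\frac{\varepsilon}{2}|\nabla u|^2 + \frac{1}{\varepsilon}W(u)\chi_{\{u>0\}} \;\geq\; \sqrt{2W(u)}\,|\nabla u|\,\chi_{\{u>0\}} \;=\; |\nabla \Phi(u)|.
\]
Since the first integral of $g$ yields $\sigma_0 = \int_0^1\sqrt{2W(t)}\,dt$, we have $\Phi(u_\varepsilon) \to \sigma_0\chi_{\Omega_\infty}$ in $L^1_{loc}$, so lower-semicontinuity of the total variation gives, for any open $U \Subset \R^n$,
\[
\liminf_{\varepsilon\to0}\int_U \frac{\varepsilon}{2}|\nabla u_\varepsilon|^2 + \frac{1}{\varepsilon}W(u_\varepsilon)\chi_{\{u_\varepsilon>0\}} \;\geq\; \sigma_0 P(\Omega_\infty,U).
\]
In particular $\mu(U) \geq \sigma_0 P(\Omega_\infty,U)$, which already gives $\partial^\ast\Omega_\infty \subset \Sigma$.

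Next I would establish the matching upper bound via a recovery-sequence argument that simultaneously shows $\Omega_\infty$ is a local perimeter minimizer. For a smooth competitor $\tilde\Omega$ with $\tilde\Omega\triangle\Omega_\infty \Subset U$, take the Modica--Mortola recovery sequence $\tilde u_\varepsilon(x) := g(d_{\tilde\Omega}(x)/\varepsilon)$ inside $\tilde\Omega$ and $\tilde u_\varepsilon \equiv 0$ outside (truncated to $[0,1]$), where $d_{\tilde\Omega}$ is the signed distance from $\partial\tilde\Omega$. This is admissible for the one-phase functional, and a standard coarea computation yields
\[
\lim_{\varepsilon\to0}\int_V \frac{\varepsilon}{2}|\nabla\tilde u_\varepsilon|^2 + \frac{1}{\varepsilon}W(\tilde u_\varepsilon)\chi_{\{\tilde u_\varepsilon>0\}} \;=\; \sigma_0 P(\tilde\Omega,V)
\]
for any $V \supset U$. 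Pick the annulus $V\setminus U$ to lie in $\R^n\setminus\Sigma$, so that Lemma \ref{lem 3.4} and Lemma \ref{lem 3.2} make $u_\varepsilon$ exponentially close to $0$ or $1$ there, and $\tilde u_\varepsilon$ is likewise close to the same constant; a cutoff interpolation produces an admissible test function agreeing with $u_\varepsilon$ on $\partial V$ with annular energy $o(1)$. Minimality of $u_\varepsilon$ then gives, after sending $\varepsilon\to0$ and shrinking the annulus, $\sigma_0 P(\Omega_\infty,U) \leq \sigma_0 P(\tilde\Omega,U)$, proving that $\Omega_\infty$ minimizes perimeter locally.

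Applying the recovery construction with $\tilde\Omega = \Omega_\infty$ and combining with the lower bound yields $\mu(U) = \sigma_0 P(\Omega_\infty,U)$ for every $U$ with $\mu(\partial U)=0$. By Radon--Nikodym this identifies
\[
\mu \;=\; \sigma_0\,\mathcal{H}^{n-1}\lfloor_{\partial^\ast\Omega_\infty}.
\]
Equivalently, the quantization result Theorem \ref{quantization} gives $\Theta/\sigma_0 \in \mathbb{Z}_+$ a.e.\ on $\Sigma$, and the matching of total masses forces multiplicity one. Finally $\Sigma = \mathrm{spt}(\mu) = \overline{\partial^\ast\Omega_\infty}$; since $\Omega_\infty$ is a minimal set the reduced boundary is dense in $\partial\Omega_\infty$, so $\Sigma = \partial\Omega_\infty$, and the singular set $\partial\Omega_\infty \setminus \partial^\ast\Omega_\infty$ has vanishing $\mathcal{H}^{n-1}$-measure, giving $\mu = \sigma_0\mathcal{H}^{n-1}\lfloor_{\partial\Omega_\infty}$.

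The main technical obstacle is the gluing in the recovery-sequence step: one must paste $\tilde u_\varepsilon$ to $u_\varepsilon$ across the annulus while keeping the function nonnegative and controlling both the Dirichlet and potential energies. The key point is to place the annulus in a region where Lemma \ref{lem 3.4} forces $u_\varepsilon$ to be uniformly close to $0$ or $1$ on a component, matching the limit of $\tilde u_\varepsilon$ there; the interpolation error is then controlled by the $L^1_{loc}$ convergence $u_\varepsilon\to\chi_{\Omega_\infty}$ and the exponential decay estimate of Lemma \ref{lem 3.2}, both of which vanish as $\varepsilon\to 0$.
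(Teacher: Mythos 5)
Your proposal follows the same route the paper sketches — lower bound via Young's inequality plus BV lower semicontinuity, the Modica--Mortola recovery sequence $g(d_{\tilde\Omega}/\varepsilon)$, and a cut-and-paste comparison enforced by the minimality of $u_\varepsilon$, with the quantization theorem used to pin down multiplicity one — so the strategy matches the paper's ``cut and paste'' plus ``standard $\Gamma$-convergence theory.'' You also correctly flag the gluing as the main technical point.

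The gap is in the specific mechanism you propose for that gluing. You cannot, in general, ``pick the annulus $V\setminus U$ to lie in $\R^n\setminus\Sigma$.'' Your own lower bound shows $\partial^*\Omega_\infty\subset\Sigma$, and $\partial\Omega_\infty$ is an $(n-1)$-dimensional set that will cross any annulus enclosing a ball it meets; hence the interpolation region necessarily contains a piece of the transition layer of $u_\varepsilon$, where Lemma \ref{lem 3.4} gives no smallness and the potential term $\frac{1}{\varepsilon}W(\cdot)$ of the interpolant is $O(\varepsilon^{-1})$ on a set of measure $O(\varepsilon)$ per unit area of $\Sigma$. The correct fix — which your closing paragraph gestures at through the $L^1_{loc}$ convergence — is a Fubini/averaging argument in the radial variable: since $\int_{V\setminus U}|u_\varepsilon-\tilde u_\varepsilon|\to0$ (both converge to $\chi_{\Omega_\infty}$ there because $\tilde\Omega=\Omega_\infty$ outside $U$) and both annular energies are uniformly bounded, for each $\varepsilon$ one selects a good radius and a shell of width $s_\varepsilon$ with $\varepsilon\ll s_\varepsilon\ll 1$ on which the sliced energy and sliced $L^1$ difference are controlled by their averages; the cutoff interpolation $\phi\tilde u_\varepsilon+(1-\phi)u_\varepsilon$ on that shell then has $o(1)$ energy, first as $\varepsilon\to0$ and then shrinking the shell. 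With the gluing repaired in this way, the rest of your argument — matching $\mu$ against $\sigma_0 P(\Omega_\infty,\cdot)$, invoking Theorem \ref{quantization} for multiplicity one, and using density estimates and $\mathcal{H}^{n-1}(\partial\Omega_\infty\setminus\partial^*\Omega_\infty)=0$ for perimeter minimizers to get $\Sigma=\partial\Omega_\infty$ — is correct.
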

The first claim can be proved by the method of cut and paste, i.e.
constructing suitable comparison functions. The second one follows
from the standard $\Gamma$-convergence theory (see \cite{Modica 2}).

\section{Improvement of flatness}
\setcounter{equation}{0}

Now we return to the study of entire solutions of \eqref{equation}.
Let $u$ be a local minimizer of the functional \eqref{functional} in
$\R^n$. For $\varepsilon\to0$, we can apply results in the previous
section to study the convergence of the blowing down sequence
\[u_\varepsilon(x)=u(\varepsilon^{-1}x).\]

In this section we assume the blowing gown limit
$\Omega_\infty=\{x_n>0\}$ for a subsequence $\varepsilon_i\to0$.
(However, at this stage we do not know whether this limit depends on
the subsequence of $\varepsilon\to0$.) Note that this is always true
if
 $n\leq 7$, by Bernstein theorem.

The following quantity will play an important role in our analysis.
\begin{defi}[\bf Excess]
Let $P$ be an $(n-1)$-dimensional hyperplane in $\R^n$ and $e$ one
of its unit normal vector, $B_r^{n-1}(x)\subset P$ an open ball and
$\mathcal{C}_r(x)=B_r^{n-1}(x)\times(-1,1)$ the cylinder over
$B_r(x)$. The excess of $u_\varepsilon$ in $\mathcal{C}_r(x)$ with
respect to $P$ is
\begin{equation}\label{excess}
E(r;x,u_\varepsilon,P):=r^{1-n}\int_{\mathcal{C}_r(x)}\left[1-\left(\nu_\varepsilon\cdot
e\right)^2\right]\varepsilon|\nabla u_\varepsilon|^2.
\end{equation}
\end{defi}
Here $\nu_\varepsilon=\nabla u_\varepsilon/|\nabla u_\varepsilon|$
when $|\nabla u_\varepsilon|\neq 0$, otherwise we take it to be an
arbitrary unit vector.

In Proposition \ref{excess small}, we have shown that if the blowing
down limit (of $u_\varepsilon$) is a hyperplane, then the excess
with respect to this hyperplane converges to $0$.

Our main tool to prove Theorem \ref{main result 2} is the following
decay estimate. As in \cite{Wang}, we state this theorem for a
general stationary critical point of \eqref{functional singular
perturbed}, not necessarily a minimizer.
\begin{thm}[\bf Tilt-excess decay]\label{thm tilt excess decay}
For any given constant $b\in(0,1)$, there exist five universal
constants $\delta_0,\tau_0,\varepsilon_0>0$, $\theta\in(0,1/4)$ and
$K_0$ large so that the following holds. Let $u_\varepsilon$ be a
solution of \eqref{equation} with $\varepsilon\leq \varepsilon_0$ in
$B_4$, satisfying the Modica inequality,
$0\in\partial\{u_\varepsilon>0\}$, and
\begin{equation}\label{close to plane 0}
4^{-n}\int_{\mathcal{B}_4}\frac{\varepsilon}{2}|\nabla
u_\varepsilon|^2+\frac{1}{\varepsilon}W(u_\varepsilon)\chi_{\{u_\varepsilon>0\}}\leq
\left(1+\tau_0\right)\sigma_0\omega_n.
\end{equation}
Suppose the excess with respect to $\R^{n-1}$
\begin{equation}\label{small excess 0}
\delta_\varepsilon^2:=E(2;0,u_\varepsilon,\R^{n-1})\leq\delta_0^2,
\end{equation}
where $\delta_\varepsilon\geq K_0\varepsilon$. Then there exists
another plane $P$, such that
\begin{equation}\label{excess decay}
E(\theta;0,u_\varepsilon,
P)\leq\frac{\theta}{2}E(2;0,u_\varepsilon,\R^n).
\end{equation}
Moreover, there exists a universal constant $C$ such that
\begin{equation}\label{control on the direction}
\|e-e_{n+1}\|\leq CE(2;0,u_\varepsilon,\R^n)^{1/2},
\end{equation}
 where $e$ is
the unit normal vector of $P$ pointing to the above.
\end{thm}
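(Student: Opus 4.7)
My plan is a compactness-and-linearization argument in the style of De~Giorgi's original $\varepsilon$-regularity proof for minimal surfaces, adapted to the phase-transition setting as in \cite{Savin} and \cite{Wang}. I argue by contradiction: fix $\theta\in(0,1/4)$ to be chosen last, and suppose that for every quadruple $(\delta_0,\tau_0,\varepsilon_0,K_0)$ there is a solution violating the conclusion. A diagonal extraction produces a sequence $u_k:=u_{\varepsilon_k}$ satisfying (\ref{close to plane 0}) and (\ref{small excess 0}) with $\tau_k,\delta_k\to 0$ and $\varepsilon_k/\delta_k\to 0$, for which (\ref{excess decay}) fails for every plane $P$. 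The assumption $\delta_k\geq K_0\varepsilon_k$ with $K_0\to\infty$ places us in the large-tilt regime, where the free boundary's oscillation dominates the transition-layer thickness $\varepsilon_k$.

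Compactness comes from the Huitchinson--Tonegawa theory of Section 3. Along a subsequence the measures converge to $\sigma_0\Theta\mathcal{H}^{n-1}\lfloor_\Sigma$ with $\Theta$ an integer; hypothesis (\ref{close to plane 0}) together with Theorem~\ref{quantization} and the monotonicity formula forces $\Theta=1$, and the small excess forces $\Sigma=\R^{n-1}\cap\mathcal{C}_2$. By Lemmas~\ref{lem 3.2}--\ref{lem 3.4} the free boundary $\partial\{u_k>0\}\cap\mathcal{C}_{3/2}$ is, for large $k$, a smooth graph $x_n=\varphi_k(x')$ over $B_{3/2}^{n-1}$ with $\|\varphi_k\|_\infty\to 0$, and $u_k$ is well approximated on $\mathcal{C}_{3/2}$ by the sliding one-dimensional profile $g((x_n-\varphi_k(x'))/\varepsilon_k)$.

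Set $v_k(x'):=\varphi_k(x')/\delta_k$. Using Proposition~\ref{excess small} together with the layer approximation one shows $\|\nabla v_k\|_{L^2(B_{3/2}^{n-1})}$ is uniformly bounded, so $v_k\to v_\infty$ along a subsequence, weakly in $H^1$ and strongly in $L^2$. The key step is to identify the equation for $v_\infty$. Testing the stationary identity (\ref{stationary condition singularly perturbed}) against $X=\xi(x')e_n$ with $\xi\in C_0^\infty(B_{3/2}^{n-1})$, subtracting the contribution of the translated one-dimensional profile, dividing by $\delta_k^2$ and sending $k\to\infty$, the first integral (\ref{first integral}) collapses the $x_n$-integrals into multiples of $\sigma_0$, and the leading-order identity becomes
\[\sigma_0\int_{B^{n-1}_{3/2}}\nabla v_\infty\cdot\nabla\xi=0,\]
so $v_\infty$ is harmonic. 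Standard Campanato decay yields a linear function $\ell(x')=a\cdot x'$ with $\theta^{-n-1}\int_{B^{n-1}_\theta}|v_\infty-\ell|^2\leq C_1\theta^2$ and $|a|\leq C$. Taking $P_k$ to be the hyperplane through the origin with unit normal $e_k:=(e_n-\delta_k a)/\sqrt{1+\delta_k^2|a|^2}$ and translating the decay of $v_k$ back into the original excess via the layer approximation gives $E(\theta;0,u_k,P_k)\leq C_1\theta^2\delta_k^2+o(\delta_k^2)$; choosing $\theta$ so that $C_1\theta\leq 1/4$ contradicts the failure of (\ref{excess decay}) for large $k$, and the bound (\ref{control on the direction}) falls out of the explicit formula $e_k-e_n=O(\delta_k)$.

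The main obstacle will be the rigorous identification of the harmonic limit. The stationary identity mixes bulk interior terms with a singular contribution concentrated at the free boundary (where $|\nabla u_k|=\varepsilon_k^{-1}\sqrt{2W(0)}$), so the linearization must be carried out carefully using the exponential profile estimates (\ref{equation for gradient estiamtes}) to guarantee that the overdetermined Neumann-type condition is exactly absorbed by the one-dimensional ansatz, leaving only the tangential Laplace term for $v_\infty$. The constraint $\delta_k\geq K_0\varepsilon_k$ with $K_0$ large is used precisely at this point, to justify a Taylor expansion of $g((x_n-\varphi_k(x'))/\varepsilon_k)$ in powers of $\delta_k$ up to the order dictated by the excess normalization.
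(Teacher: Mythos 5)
Your outline is the correct general framework and matches the structure of the paper's four-step sketch (compactness, linearization of the stationary identity, identification of a harmonic limit, transfer of the decay back to the excess). But there are two genuine gaps.

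The decisive one is the absence of a \emph{strong} $W^{1,2}$ convergence statement. You establish that the rescaled height functions $v_k=\varphi_k/\delta_k$ converge weakly in $H^1$ and strongly only in $L^2$, and then assert that ``translating the decay of $v_k$ back into the original excess via the layer approximation gives $E(\theta;0,u_k,P_k)\le C_1\theta^2\delta_k^2+o(\delta_k^2)$.'' After the linearization, the left side of this inequality is, up to constants and $o(\delta_k^2)$, the quantity $\delta_k^2\,\theta^{1-n}\int_{B^{n-1}_\theta}|\nabla v_k-a|^2$. Weak $H^1$ convergence gives only \emph{lower} semicontinuity of such Dirichlet integrals; it does not produce the \emph{upper} bound in terms of $\int_{B^{n-1}_\theta}|\nabla v_\infty-a|^2$ that you need. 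This is precisely why the paper has a separate Step~4: a second use of the stationary identity, now with the test field $X=\varphi\psi x_n e_n$ (a Caccioppoli-type identity), upgrades the weak convergence of $\nabla h_\varepsilon^t/\delta_\varepsilon$ to strong $L^2_{loc}$ convergence, and only then does the harmonic decay transfer. You use $X=\xi(x')e_n$ once (the analogue of Step~3, giving harmonicity) but never perform the second variation, so the conclusion does not follow from what you have proved.

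A second, more technical, gap is that you assert from Lemmas~\ref{lem 3.2}--\ref{lem 3.4} that $\partial\{u_k>0\}\cap\mathcal{C}_{3/2}$ is, for large $k$, a single smooth graph over $B^{n-1}_{3/2}$, and then base the whole linearization on the one-dimensional ansatz $g((x_n-\varphi_k(x'))/\varepsilon_k)$. The clearing-out and exponential-decay lemmas give no such global graphicality; the theorem is stated for general stationary critical points, not minimizers, so the Alt--Caffarelli flatness regularity is not directly available. The paper instead works with \emph{all} level sets $\{u_\varepsilon=t\}$, $t\in(0,1-b)$, represented as Lipschitz graphs $x_n=h_\varepsilon^t(x')$ only outside a bad set whose measure is controlled by $E(2;0,u_\varepsilon,\R^{n-1})$ via the weak $L^1$ maximal-function estimate, and the condition $\delta_\varepsilon\gg\varepsilon$ enters to make the limit of $h_\varepsilon^t/\delta_\varepsilon$ independent of $t$ — not to Taylor-expand a global profile ansatz. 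Your reduction to a single graph and a sliding profile therefore presupposes more regularity and closeness to one dimension than the hypotheses provide, and the error in the ``layer approximation'' has to be shown to be $o(\delta_k^2)$, which is not automatic.
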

The proof of this theorem is similar to the one in \cite{Wang}. It
is mainly divided into four steps:
\begin{enumerate}
\item [{\bf Step 1.}] $\partial\{u_\varepsilon>0\}$ and $\{u_\varepsilon=t\}$ (for $t\in(0,1-b)$ with $b>0$
fixed) can be represented by Lipschitz graphs over $\R^{n-1}$,
$x_n=h_\varepsilon^t(x^\prime)$, except a bad set of small measure
(controlled by $E(2;0,u_\varepsilon,\R^{n-1})$). This can be
achieved by the weak $L^1$ estimate for Hardy-Littlewood maximal
functions, as in the proof of Theorem \ref{quantization}.

\item [{\bf Step 2.}] By writing the excess using the $(x^\prime,t)$
coordinates ($t$ as in Step 1), $h_\varepsilon^t/\delta_\varepsilon$
are uniformly bounded in $W^{1,2}_{loc}(B_1^{n-1})$. Then we can
assume that they converge weakly to a limit $h_\infty$. Here we need
the assumption $\delta_\varepsilon\gg \varepsilon$ to guarantee the
limit is independent of $t$.

\item [{\bf Step 3.}] By choosing $X=\varphi\psi e_n$ in the
stationary condition \eqref{stationary condition singularly
perturbed}, where $\varphi\in C_0^\infty(B_1^{n-1})$ and $\psi\in
C_0^\infty((-1,1))$, and then passing to the limit, it is shown that
$h_\infty$ is harmonic in $B_1^{n-1}$.

\item [{\bf Step 4.}] By choosing $X=\varphi\psi x_ne_n$ in the
stationary condition \eqref{stationary condition singularly
perturbed} and then passing to the limit, it is shown that (roughly
speaking) $h_\varepsilon^t/\delta_\varepsilon$ converges strongly in
$W^{1,2}_{loc}(B_1^{n-1})$. The tilt-excess decay estimate then
follows from some basic estimates on harmonic functions.
\end{enumerate}

As in \cite{Wang}, using this theorem we can prove the following
estimate.
\begin{lem}
There exists a unit vector $e_\infty$ and a universal constant
$C(n)$ such that
\begin{equation}\label{4.1}
\int_{\mathcal{B}_R(x)}\left[1-\left(\nu\cdot
e_\infty\right)^2\right]|\nabla u|^2\leq C(n)R^{n-2},\ \ \forall \
x\in\R^n,\ R>1.
\end{equation}
\end{lem}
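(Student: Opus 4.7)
The plan is to iterate Theorem \ref{thm tilt excess decay} at every base point $x$, starting from a large scale at which the excess is already small by the blow-down hypothesis. Writing $\widetilde E(R;x,e):=R^{1-n}\int_{\mathcal B_R(x)}[1-(\nu\cdot e)^2]|\nabla u|^2$ for the unrescaled excess, Proposition \ref{excess small} together with translation invariance of the limiting half-space in the tangential directions yields $\widetilde E(R;x,e_n)\to0$ as $R\to\infty$, uniformly in $x\in\mathbb R^n$. Pick $R_*$ so large that $\widetilde E(R_*;x,e_n)\leq\delta_0^2$ for every $x$; this sets up the iteration.

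Fix $x$ and set $e_0^{(x)}=e_n$, $R_0=R_*$. Applying Theorem \ref{thm tilt excess decay} to the rescalings $v_k(y):=u(x+R_ky/2)$ (so that the role of $\varepsilon$ is played by $\varepsilon_k=2/R_k$) produces, as long as the hypothesis $\delta_{\varepsilon_k}\geq K_0\varepsilon_k$ persists, a new scale $R_{k+1}=(\theta/2)R_k$ and a new unit normal $e_{k+1}^{(x)}$ with
\[
\widetilde E(R_{k+1};x,e_{k+1}^{(x)})\leq\tfrac{\theta}{2}\widetilde E(R_k;x,e_k^{(x)}),\qquad
\|e_{k+1}^{(x)}-e_k^{(x)}\|\leq C\,\widetilde E(R_k;x,e_k^{(x)})^{1/2}.
\]
Iteration gives $\widetilde E(R_k;x,e_k^{(x)})\leq(\theta/2)^k\widetilde E(R_*)$ and makes $\{e_k^{(x)}\}$ Cauchy with limit $e_\infty(x)$ obeying $\|e_\infty(x)-e_n\|\leq C\,\widetilde E(R_*)^{1/2}$. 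Letting $R_*\to\infty$ forces $e_\infty(x)=e_n$ for every $x$, so $e_\infty:=e_n$ is the universal vector claimed in the lemma.

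For the quantitative bound, given $R\geq 1$ and $x$, locate $k$ with $R_{k+1}\leq R\leq R_k$. A nested-ball comparison gives $\widetilde E(R;x,e_k^{(x)})\leq C\,\widetilde E(R_k;x,e_k^{(x)})$, and exchanging $e_k^{(x)}$ for $e_\infty$ costs at most $C\|e_\infty-e_k^{(x)}\|$, using the crude bound $R^{1-n}\int_{\mathcal B_R}|\nabla u|^2\leq C$ from Proposition \ref{energy growth}. The main obstacle is precisely that $\|e_\infty-e_k^{(x)}\|$ decays only at rate $(\theta/2)^{k/2}$, the square root of the tilt-excess decay rate, so a naive combination produces only $\widetilde E(R;x,e_\infty)\leq C(R/R_*)^{1/2}\widetilde E(R_*)^{1/2}$ and falls short of the sharp $R^{n-2}$ exponent. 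To close this gap I would invoke the monotonicity formula (Proposition \ref{monotonicity formula}): its non-negative radial-derivative term yields the summable tail estimate
\[
\int_{\mathbb R^n\setminus B_R(x)}|y-x|^{1-n}(\partial_r u)^2\,dy\leq E(\infty;u,x)-E(R;u,x),
\]
which, after averaging over a family of base points perpendicular to $e_\infty$, controls the cumulative plane drift across scales and upgrades the tangential excess to the sharp rate $\widetilde E(R;x,e_\infty)\leq C(n)/R$, equivalent to the claimed inequality.
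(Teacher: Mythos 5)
Your overall strategy---iterating Theorem \ref{thm tilt excess decay} starting from a large scale where the blow-down makes the excess small, and tracking the drift of the approximating plane---is indeed the approach the paper takes (it is explicitly attributed to \cite{Wang}, and the paper's remark after the lemma emphasizes exactly this combination of excess decay and control on the variation of $e_{x,\theta^{-i}}$ via \eqref{control on the direction}). So the setup is right. The problem lies in the final combination step and in the device you introduce to repair it.

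The ``square-root obstacle'' you identify is an artifact of the crude plane-exchange bound $|[1-(\nu\cdot e')^2]-[1-(\nu\cdot e)^2]|\lesssim\|e-e'\|$. A sharper and entirely elementary estimate is available: writing $\nu_\perp^e:=\nu-(\nu\cdot e)e$ so that $1-(\nu\cdot e)^2=|\nu_\perp^e|^2$, one checks $|\nu_\perp^{e'}-\nu_\perp^e|\leq 2\|e-e'\|$, whence $1-(\nu\cdot e')^2\leq 2\bigl[1-(\nu\cdot e)^2\bigr]+8\|e-e'\|^2$; equivalently, after Cauchy--Schwarz on the cross term, $\widetilde E(R;x,e')\lesssim\widetilde E(R;x,e)+\|e-e'\|^2$. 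Because $\|e_\infty-e_k^{(x)}\|\lesssim(\theta/2)^{k/2}$, the correction enters \emph{quadratically} and contributes $(\theta/2)^k$, the same rate as the excess itself, and not the $(\theta/2)^{k/2}$ you obtained. This is the standard mechanism in tilt-excess iterations (Allard-type regularity arguments), and it is what makes the statement \eqref{control on the direction} useful. Your attempted repair via the monotonicity formula does not address the actual issue: the radial-derivative tail estimate you write down is correct (it is a direct consequence of Proposition \ref{monotonicity formula}), but it controls $|\partial_r u|^2$ in the radial direction from the base point, which is a priori unrelated to the tangential component $1-(\nu\cdot e_\infty)^2$ appearing in the excess, and the phrase ``averaging over a family of base points perpendicular to $e_\infty$'' does not describe a concrete mechanism to convert one into the other. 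As written, this step is not a proof.

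There are two further points you pass over that the actual argument must address. First, the iteration can only be applied while $\delta_\varepsilon\geq K_0\varepsilon$; once the excess at scale $R_k$ drops below $K_0^2\varepsilon_k^2\sim K_0^2/R_k^2$, Theorem \ref{thm tilt excess decay} no longer applies, and you must produce the bound at the remaining scales $1\leq R\leq R_{k_0}$ by a separate, elementary use of the very smallness that stopped the iteration. Second, your assertion that ``letting $R_*\to\infty$ forces $e_\infty(x)=e_n$'' implicitly assumes both uniformity of the blow-down in $x$ and independence of the iterated limit $e_\infty(x)$ from the starting scale $R_*$; neither is automatic, and the paper's proof has to establish the uniqueness of the blow-down direction as part of the argument rather than take it as given.
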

Note that here the exponent $n-2<n-1$, which is the energy growth
order of $u$ (see Corollary \ref{clearing out 1} and Proposition
\ref{energy growth}). The blowing down analysis in Section 3 only
gives
\[\int_{\mathcal{B}_R(x)}\left[1-\left(\nu\cdot
e_{R,x}\right)^2\right]|\nabla u|^2=o(R^{n-1}),\] where the unit
vector $e_{R,x}$ may also depend on $x$ and $R$. However, by
iterating Theorem \ref{thm tilt excess decay} on balls of the form
$B_{\theta^{-i}}(x)$, we not only get the decay of the excess on
these balls, but also get a control on the variation of
$e_{x,\theta^{-i}}$ (through the estimate \eqref{control on the
direction}).

Still as in \cite{Wang}, \eqref{4.1} implies the uniqueness of the
blowing down limit constructed in the previous section.

Next consider the distance type function
\[\Psi(x):=g^{-1}\circ u.\]
It satisfies
\[
-\Delta\Psi=f(\Psi)(1-|\nabla\Psi|^2),\quad \mbox{in}\
\{\Psi>0\}=\{u>0\},
\]
where $f(t):=\frac{W^\prime(g(t))}{\sqrt{2W(g(t))}}$.

By the vanishing viscosity method, as $\varepsilon\to0$,
\[\Psi_\varepsilon(x):=\varepsilon\Psi(\varepsilon^{-1}x)\]
converges to a limit $\Psi_\infty$ uniformly on any compact set of
$\R^n$, and in $C^1$ on any compact set of $\{\Psi_\infty>0\}$.
Moreover, in $\{\Psi_\infty>0\}$, $\Psi_\infty$ is a viscosity
solution to the eikonal equation
\[|\nabla\Phi_0|^2-1=0.\]

By definition, we can show that $\{\Psi_\infty>0\}=\Omega_\infty$.
Using the estimate \eqref{4.1} we know $\Psi_\infty$ depends only on
the $e_\infty$ direction. Hence after suitable rotation
$\Psi_\infty=x_n^+$.

The $C^1$ convergence of $\Psi_\varepsilon$ then implies that
$\nabla\Psi$ is arbitrarily close to $e_n$, as far as $u$ is close
enough to $1$, in a uniform manner. This then enables us to apply
the sliding method to deduce that $u$ depends only the $x_n$
variable, hence finish the proof of Theorem \ref{main result 2}.

\section{Serrin's overdetermined problem}
\setcounter{equation}{0}

In this section we assume that $u$ is a solution of \eqref{equation
0} satisfying the monotonicity condition \eqref{monotinicity in one
direction}, where $W$ is a double well potential satisfying the
hypothesis {\em (W1-5)}.

We first need a technical lemma for the application below.
\begin{lem}\label{lem 5.01}
Let $u$ be a $C^2$ solution of
\[\Delta u=W^\prime(u),\quad\mbox{in}\ \R^n,\]
satisfying $0<u\leq 1$. Then $u\equiv 1$.
\end{lem}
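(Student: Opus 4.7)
The plan is to exploit that hypothesis (W5) forces $W^\prime\le 0$ on $[0,1]$, so the equation $\Delta u=W^\prime(u)$ immediately makes $u$ superharmonic on $\R^n$. I would then argue in two stages: first show that $m:=\inf_{\R^n}u>0$, and then, using a linearization near $u=1$, conclude that $u\equiv 1$.

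For the first stage I would argue by contradiction. If $m=0$, pick $x_k$ with $u(x_k)\to 0$ and consider the translates $u_k(x):=u(x+x_k)$; by interior elliptic estimates these converge (along a subsequence) in $C^2_{\mathrm{loc}}$ to a limit $u_\infty$ solving the same equation, with $0\le u_\infty\le 1$ and $u_\infty(0)=0$. Since $u_\infty$ is still superharmonic and attains its infimum at an interior point, the strong maximum principle forces $u_\infty\equiv 0$, so $u_k\to 0$ locally uniformly. If $W^\prime(0)\neq 0$, passing to the limit in the equation gives $0=\Delta u_\infty=W^\prime(0)\neq 0$, a direct contradiction. In the remaining subcase of (W5), $W^\prime(0)=0$ and $W^{\prime\prime}(0)\neq 0$; combined with $W^\prime\le 0$ on $(0,1)$ this forces $a:=-W^{\prime\prime}(0)>0$, so that $-W^\prime(t)\ge (a/2)\,t$ for all sufficiently small $t\ge 0$. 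I would then fix $R$ so large that the first Dirichlet eigenvalue $\lambda_1(R)$ of $-\Delta$ on $B_R$ satisfies $\lambda_1(R)<a/2$; for $k$ large, $\sup_{B_R}u_k$ is below this threshold and hence $-\Delta u_k\ge (a/2)\,u_k$ on $B_R$. Testing against the positive Dirichlet eigenfunction $\phi_R$ and integrating by parts, the boundary term $\int_{\partial B_R}u_k\,\partial_\nu\phi_R$ is non-positive by Hopf, which yields
\[
\lambda_1(R)\int_{B_R}u_k\phi_R\ \ge\ \frac{a}{2}\int_{B_R}u_k\phi_R,
\]
contradicting the choice of $R$ since $u_k,\phi_R>0$ inside $B_R$.

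With $m>0$ in hand I would set $v:=1-u\in[0,1-m]$. Near $u=1$, Taylor expansion around $W^\prime(1)=0$ together with (W3) gives $-W^\prime(u)\ge(\kappa/2)(1-u)$ for $u$ close to $1$, while on the compact range $u\in[m,1-\eta]$ continuity and the strict sign from (W5) bound $-W^\prime(u)$ below by a positive constant; combining the two, one extracts a single $c>0$ with $\Delta v=-W^\prime(u)\ge c\,v$ on all of $\R^n$. Since $v$ is nonnegative, bounded and subharmonic, I take $y_k$ with $v(y_k)\to M:=\sup v$ and pass to a $C^2_{\mathrm{loc}}$ limit $v_\infty$ attaining its maximum at the origin; the strong maximum principle gives $v_\infty\equiv M$, and then $0=\Delta v_\infty\ge cM$ forces $M=0$, so $v\equiv 0$ and $u\equiv 1$.

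The main obstacle is the subcase $W^\prime(0)=0$ of the first stage: there $u\equiv 0$ is itself a solution of the equation, so the strong maximum principle alone cannot rule out that translates of a positive bounded solution collapse to zero---this is precisely why (W5) retains the second-order condition $W^{\prime\prime}(0)\neq 0$. Converting that linear instability into a contradiction via the first Dirichlet eigenfunction on a sufficiently large ball, and keeping the boundary term under control through Hopf's lemma, is the delicate point of the argument; everything else reduces to routine maximum principle and sliding-type arguments.
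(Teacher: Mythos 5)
The paper does not contain its own proof of this lemma; immediately after the statement it writes ``For a proof see [Section~4.1]{KLP}'' and defers entirely to Kowalczyk--Liu--Pacard. So there is no in-paper argument to compare against. Taken on its own terms, your blind proof is correct and complete, and the decomposition you choose (first rule out $\inf u=0$, then show $\inf u>0$ forces $u\equiv1$) is a natural one for this type of Liouville result.

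Both stages check out. Since $W^\prime\le0$ on $[0,1]$ (from (W5) together with $W^\prime(1)=0$ and continuity at $0$), $u$ is indeed superharmonic, and any $C^2_{\mathrm{loc}}$ translational limit along a minimizing sequence is again a solution in $[0,1]$ attaining the value $0$ at an interior point, hence vanishes identically. Your dichotomy on $W^\prime(0)$ is exactly the content of (W5): in the nondegenerate case $W^\prime(0)\neq0$ the limit equation is immediately violated, while in the degenerate case $W^\prime(0)=0$, $W^{\prime\prime}(0)\neq0$ the sign constraint $W^\prime<0$ on $(0,1)$ forces $W^{\prime\prime}(0)<0$, so $0$ is linearly unstable; the comparison with the first Dirichlet eigenfunction $\phi_R$ on a ball with $\lambda_1(B_R)<a/2$ is carried out correctly, including the sign of the boundary term $\int_{\partial B_R}u_k\,\partial_\nu\phi_R\le0$ via Hopf. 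For the second stage, the inequality $-W^\prime(u)\ge c(1-u)$ on $[m,1]$ follows from (W3) near $u=1$ (where $W^\prime(1)=0$ and $W^{\prime\prime}\ge\kappa$) and from strict negativity of $W^\prime$ plus compactness on $[m,\gamma]$; the limit $v_\infty$ of the maximizing translates still satisfies $\Delta v_\infty\ge c\,v_\infty$ because $u_\infty\in[m,1]$, so the strong maximum principle plus $0\ge cM$ gives $M=0$.

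One small remark: the second stage can be shortened. Once $m:=\inf u>0$, translate along a minimizing sequence to obtain $\tilde u$ with $\tilde u\ge m$, $\tilde u(0)=m$, and $\Delta\tilde u=W^\prime(\tilde u)$. At the interior minimum $\Delta\tilde u(0)\ge0$, i.e.\ $W^\prime(m)\ge0$; but $W^\prime<0$ on $(0,1)$ and $W^\prime(1)=0$, so necessarily $m=1$ and hence $u\equiv1$. This avoids the explicit linearization constant $c$. Your version is slightly longer but equally valid, and has the advantage of producing a quantitative exponential-convergence estimate as a byproduct, which is in the spirit of Lemma~\ref{lem 3.2} elsewhere in the paper.
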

For a proof see \cite[Section 4.1]{KLP}.

\begin{lem}\label{lem 5.1}
For any $x^\prime\in\R^{n-1}$,
\[\lim_{x_n\to+\infty}u(x^\prime,x_n)=1,\]
and $u(x^\prime,-x_n)=0$ for all $x_n>0$ large.
\end{lem}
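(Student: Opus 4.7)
The plan is to combine the monotonicity assumption \eqref{monotinicity in one direction} with the Liouville-type classification in Lemma \ref{lem 5.01}. For each fixed $x' \in \R^{n-1}$, the strict monotonicity of $u(x',\cdot)$ together with the bound $u\le 1$ (Proposition \ref{prop 2.1}) guarantees that the pointwise limit
\[
L(x') := \lim_{x_n\to+\infty} u(x',x_n)
\]
exists and lies in $(0,1]$. Positivity follows from picking any reference height $x_n^0 > \varphi(x')$ so that $(x',x_n^0)\in\Omega$ and $u(x',x_n^0)>0$, and then using monotonicity to deduce $L(x')\ge u(x',x_n^0)>0$.

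To show $L\equiv 1$, I would consider the upward translations
\[
u_k(x',x_n) := u(x',x_n+k), \qquad k\to+\infty.
\]
For any compact set $K\subset\R^n$, once $k$ is sufficiently large one has $K+ke_n\subset\Omega$ (this is where the epigraph structure of $\Omega$ enters), so $u_k$ solves $\Delta u_k+f(u_k)=0$ in a neighborhood of $K$. Since $f$ is Lipschitz and $0\le u_k\le 1$, standard interior Schauder estimates give uniform $C^{2,\alpha}(K)$ bounds, and a diagonal extraction yields $C^2_{loc}(\R^n)$ convergence (along a subsequence) to a limit $u_\infty$. Monotonicity in $x_n$ upgrades this to monotone pointwise convergence $u_k\uparrow L(x')$ for the full sequence, forcing $u_\infty(x',x_n)=L(x')$ for every $(x',x_n)\in\R^n$. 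Hence $u_\infty$ is an entire $C^2$ solution of $\Delta u_\infty+f(u_\infty)=0$ with $0<u_\infty\le 1$, and Lemma \ref{lem 5.01} applies to give $u_\infty\equiv 1$, i.e.\ $L(x')\equiv 1$.

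The second assertion is purely geometric: by convention $u$ vanishes outside the epigraph $\Omega$, and for fixed $x'\in\R^{n-1}$ the value $\varphi(x')$ is a finite real number, so whenever $x_n > \max\{0,-\varphi(x')\}$ the point $(x',-x_n)$ lies below the graph of $\varphi$ and hence outside $\Omega$, giving $u(x',-x_n)=0$.

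The only substantive step is the first, and its main obstacle is ensuring that the $C^2_{loc}$-limit $u_\infty$ is a genuine classical solution on \emph{all} of $\R^n$ with the strict positivity $u_\infty>0$ required by Lemma \ref{lem 5.01}. Both features are delivered by the epigraph structure combined with monotonicity: every point of $\R^n$ eventually lies in the translated domain $\Omega-ke_n$, and the pointwise monotone limit $L(x')$ is strictly positive by the reference-point argument above. Once these are in place, the Liouville statement of Lemma \ref{lem 5.01} closes the argument without further work.
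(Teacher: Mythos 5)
Your proof is correct and follows essentially the same route as the paper's: translate upward, extract a $C^2_{loc}$ limit $u_\infty$ by interior elliptic estimates, and invoke the Liouville-type Lemma \ref{lem 5.01}. The difference worth flagging is that you lean on the monotonicity assumption \eqref{monotinicity in one direction} both to identify the full-sequence limit $L(x')$ and to establish $u_\infty>0$, whereas the paper's proof deliberately avoids monotonicity: it first shows $\lim_{t\to\pm\infty}\mathrm{dist}\left((x',te_n),\partial\Omega\right)=+\infty$ directly from the epigraph structure and extracts a positive entire solution from there. This distinction matters for the paper's bookkeeping, since immediately after deducing Theorem \ref{reduction to free boundary} the authors remark that the monotonicity condition has not been used up to that point, so that the boundary gradient identity $|\nabla u|=\sqrt{2W(0)}$ holds for any solution of \eqref{equation 0}; your version of Lemma \ref{lem 5.1} would not sustain that remark as stated. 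Under the standing hypotheses of Section 5 (which do include \eqref{monotinicity in one direction}) your argument is perfectly valid, and it has the advantage of making the positivity of $u_\infty$ completely explicit, but it is slightly less general than what the paper needs downstream.
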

\begin{proof}
By a contradiction argument, we can show that
\[\lim_{t\to\pm\infty}\mbox{dist}\left((x^\prime,te_n),\partial\Omega\right)=+\infty.\]
Thus for any $R>0$ and $t>0$ large,
$B_R(x^\prime,-te_n)\subset\Omega^c$. In particular,
$u(x^\prime,-te_n)=0$ for all $t$ large.

By the same reasoning and standard elliptic estimates, as
$t\to+\infty$,
\[u^t(\cdot)=u((x^\prime,te_n)+\cdot)\]
converges in $C^2_{loc}(\R^n)$ to a limit function $u_\infty$, which
is a positive solution of
\[\Delta u_\infty=W^\prime(u_\infty)\]
in $\R^n$. Since $0<u_\infty\leq 1$, by the previous lemma,
$u_\infty\equiv 1$.
\end{proof}

\begin{lem}
\label{upper}
On $\partial\Omega$, $|\nabla u|\geq\sqrt{2W(0)}$.
\end{lem}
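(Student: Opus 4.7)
The plan is to exploit that $\Omega$ is an epigraph by comparing $u$ with a radial solution on arbitrarily large interior balls tangent to $\partial\Omega$. Writing $c:=|\nabla u|$ for the constant value on $\partial\Omega$, I will produce, for every $R$, a boundary point $x_R$ at which $c\geq |\nabla v_R|(x_R)$ for an explicit radial comparison $v_R$ satisfying $|\nabla v_R|(x_R)\to\sqrt{2W(0)}$ as $R\to\infty$. Concretely, Lemma \ref{lem 5.1} guarantees $\mathrm{dist}((x',te_n),\partial\Omega)\to\infty$ as $t\to+\infty$, so a ball $B_R$ sits inside $\Omega$ for centers high enough in the $e_n$-direction. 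Sliding such a ball downward in $-e_n$ until first contact with $\partial\Omega$ gives $B_R(z_R)\subset\overline\Omega$ with tangent contact at $x_R\in\partial\Omega$; because translation in $e_n$ preserves the epigraph condition, every upward shift $B_R(z_R+\tau e_n)$ with $\tau\geq 0$ is also contained in $\Omega$.

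For the comparison I would take $v_R$ to be the minimizer of $\int_{B_R(z_R)}(\tfrac12|\nabla v|^2 + W(v))$ over $v\in H^1_0(B_R(z_R))$. A piecewise linear competitor equal to $1$ on $B_{R-1}(z_R)$ and interpolating to $0$ on the outer shell has energy $O(R^{n-1})\ll W(0)|B_R|\sim R^n$, so $v_R\not\equiv 0$. Truncation to $[0,1]$ and Schwarz rearrangement both decrease the energy, so $v_R$ is radial with $0\le v_R\le 1$; hypothesis (W5) gives $W'\le 0$ on $[0,1]$, making $v_R$ superharmonic and therefore strictly positive inside. Writing $v_R(x)=h_R(R-|x-z_R|)$, the radial ODE
\[
h_R''(s)-\frac{n-1}{R-s}h_R'(s)=W'(h_R(s)),\qquad h_R(0)=0,\quad h_R(R)\to 1,
\]
reduces as $R\to\infty$ to $g''=W'(g)$, $g(0)=0$, $g(\infty)=1$, and a standard shooting argument yields $h_R\to g$ in $C^1_{\mathrm{loc}}([0,\infty))$. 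Consequently $|\nabla v_R|(x_R)=h_R'(0)\to g'(0)=\sqrt{2W(0)}$.

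The inequality $u\geq v_R$ in $B_R(z_R)$ I would derive by sliding. Set $v_R^\tau(y):=v_R(y-\tau e_n)$. For $\tau$ sufficiently large, Lemma \ref{lem 5.1} and the $C^2_{\mathrm{loc}}$ compactness in its proof give $u\geq 1-\delta$ uniformly on $B_R(z_R+\tau e_n)$ with $\delta<1-\max v_R$ (which is strictly positive for each fixed $R$), so $u>v_R^\tau$ pointwise. Now decrease $\tau$ continuously to $0$; at any putative first interior touch $x_\ast$, the function $w:=u-v_R^\tau\geq 0$ satisfies $\Delta w+k(y)w=0$ with $k(y):=-[W'(u)-W'(v_R^\tau)]/(u-v_R^\tau)$ bounded (as $W'$ is Lipschitz) and $w(x_\ast)=0$, so Harnack forces $w\equiv 0$ on the ball, contradicting $u>0=v_R^\tau$ on $\partial B_R(z_R+\tau e_n)$. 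No interior touching can occur, and at $\tau=0$ we obtain $u\geq v_R$ on $B_R(z_R)$. Hopf's lemma at $x_R$, where both functions vanish and the outward normals of $\Omega$ and $B_R(z_R)$ coincide, then gives $|\nabla u|(x_R)\geq|\nabla v_R|(x_R)$, and the constancy of $|\nabla u|$ on $\partial\Omega$ yields $c\geq|\nabla v_R|(x_R)\to\sqrt{2W(0)}$.

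The hardest step I anticipate is the sliding comparison. The difference $w$ solves a linear equation whose zero-order coefficient $k$ has no sign---because $W'$ is not monotone on $[0,1]$---and one must invoke Harnack's inequality (which does hold irrespective of the sign of $k$ when $k$ is bounded, by Moser's theory) rather than a naive maximum principle. A second delicate point is extracting the uniform bound $u\geq 1-\delta$ on the shifted ball with $\delta$ small relative to $1-\max v_R$; this requires the full $C^2_{\mathrm{loc}}$ convergence underlying the proof of Lemma \ref{lem 5.1}, not merely its pointwise statement.
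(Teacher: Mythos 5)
Your proof is correct and follows essentially the same strategy as the paper: slide a ball $B_R$ downward in the $e_n$ direction until it touches $\partial\Omega$ from inside, compare $u$ with a positive radial solution $v_R$ of $\Delta v=W'(v)$ in that ball, apply Hopf's lemma at the tangency point, and let $R\to\infty$ to show the interior normal derivative of $v_R$ there tends to $\sqrt{2W(0)}$. The differences from the paper's write-up are only in how the auxiliary steps are filled in. You construct $v_R$ as an energy minimizer and invoke Schwarz rearrangement and truncation to obtain a radial, $[0,1]$-valued, nontrivial solution, whereas the paper simply posits the unique radial Dirichlet solution; you identify the limiting normal derivative via a radial ODE/shooting argument $h_R\to g$ in $C^1_{\mathrm{loc}}$, whereas the paper rescales $v_R(Re_n+\cdot)$, passes to a half-space solution and reduces it to the one-dimensional profile; and you carry out the sliding comparison $u\ge v_R^\tau$ explicitly (noting correctly that one must use Harnack/strong maximum principle for the sign-indefinite zero-order term rather than a naive comparison), where the paper cites \cite[Lemma 3.1]{BCN} for the same conclusion. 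These are self-contained substitutes for steps the paper delegates to references, not a different route; both lead to the same Hopf comparison and the same limit.

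One small point worth making more careful: the uniform lower bound $u\ge 1-\delta$ on $B_R(z_R+\tau e_n)$ for $\tau$ large should be stated as a uniform version of Lemma~\ref{lem 5.1} over the compact set of $x'$-projections of the ball; this follows by the same contradiction/translation argument (translating by the sequence $(x'_k,t_k)$ rather than a fixed $x'$), and is implicitly used in the paper's proof as well, but it is slightly more than the pointwise statement of Lemma~\ref{lem 5.1}.
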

\begin{proof}
By the same proof as in the previous lemma, for any $R>0$ there
exists a $t_0>0$ such that, for all $t\geq t_0$, the ball
$B_R(0,t)\subset\Omega$.

Let $v^R$ be the unique radial solution of
\begin{equation*}
 \left\{\begin{aligned}
&\Delta v^R=W^\prime(v^R),\ \ \ \mbox{in}\ B_R,\\
&v^R>0,\ \ \ \mbox{in}\ B_R,\\
&v^R=0,\ \ \ \mbox{on}\ \partial B_R.
                          \end{aligned} \right .
\end{equation*}
%The uniqueness follows from the radial symmetry of $u$, which is a
%consequence of the moving plane method.

For any $x$ and $R>0$, denote $v^R_x:=v^R(\cdot-x)$.

Since $\sup _{B_R}v^R<1$, if $t$ is large enough, $v^R_{te_n}<u$ in
$B_R(te_n)$. Let
\[t^\ast:=\inf\{t: B_R(te_n)\subset\Omega\}.\]
Then $B_R(t^\ast e_n)$ is tangent to $\partial\Omega$ at some point
$x_0$.

By \cite[Lemma 3.1]{BCN}, for all $t\geq t^\ast$, $u>v^R_{te_n}$ in
$B_R(te_n)$. The Hopf lemma implies that
\begin{equation}\label{5.1}
|\nabla u(x_0)|=\frac{\partial u}{\partial
\nu}(x_0)\geq\frac{\partial v^R_{t^\ast e_n}}{\partial\nu}(x_0).
\end{equation}
Here $\nu$ is the upward unit normal vector of $\partial\Omega$.
Because $B_R(t^\ast e_n)$ is tangent to $\partial\Omega$ at $x_0$,
we have
\begin{equation}\label{5.2}
\frac{\partial v^R_{t^\ast e_n}}{\partial\nu}(x_0) =-\frac{\partial
v^R}{\partial r}(Re_n)=|\nabla v(Re_n)|.
\end{equation}
On the other
hand, as $R\to+\infty$, $v_R(Re_n+\cdot)$ converges to a positive
solution of
\begin{equation*}
 \left\{\begin{aligned}
&\Delta v^\infty=W^\prime(v^\infty),\ \ \ \mbox{in}\ \R^n_+,\\
&v^\infty>0,\ \ \ \mbox{in}\ \R^n_+,\\
&v^\infty=0,\ \ \ \mbox{on}\ \partial \R^n_+.
                          \end{aligned} \right .
\end{equation*}
Because $v^R$ is radial, $v^\infty$ depends only on the $x_n$
variable. (In fact, to deduce this we do not need the radial
symmetry of $v^R$, see \cite{BCN} and references therein.) Hence it
satisfies the ODE version of \eqref{equation 0} and the conservation
relation \eqref{first integral}. In particular,
\[\sqrt{2W(0)}=|\nabla v^\infty(0,0)|=\lim_{R\to+\infty}|\nabla v_R(Re_n)|.\]
Combining this with \eqref{5.1} and \eqref{5.2} we finish the proof.
\end{proof}

\begin{lem}
\label{lower}
On $\partial\Omega$, $|\nabla u|\leq\sqrt{2W(0)}$.
\end{lem}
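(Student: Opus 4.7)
The plan is to mirror the strategy of Lemma \ref{upper}, but using large balls fit inside $\Omega^c$ tangent to $\partial\Omega$ \emph{from below}, together with an exterior radial barrier. For any $R>0$, the argument used in Lemma \ref{lem 5.1} gives $\mathrm{dist}((0,-te_n),\partial\Omega)\to+\infty$ as $t\to+\infty$, so $B_R(0,-te_n)\subset\Omega^c$ for $t$ large. Slide this ball upward, letting $t^\ast := \inf\{t>0 : B_R(0,-te_n)\subset\Omega^c\}$. Then the ball $B_R(c_R)$ with $c_R:=(0,-t^\ast e_n)$ is tangent to $\partial\Omega$ at some point $x_0^R$ from below. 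Construct the exterior radial profile $V^R(r)$ on $[R,+\infty)$ solving
$$V^{R\prime\prime}+\frac{n-1}{r}V^{R\prime}=W^\prime(V^R),\quad V^R(R)=0,\quad \lim_{r\to\infty}V^R(r)=1,\quad 0<V^R<1,\ V^{R\prime}>0,$$
whose existence follows from standard phase-plane analysis under (W1--W5). Set $\tilde V^R(x):=V^R(|x-c_R|)$, which solves $\Delta\tilde V^R=W^\prime(\tilde V^R)$ in the exterior of $B_R(c_R)$, and a fortiori in $\Omega$. Multiplying the ODE by $V^{R\prime}$ and integrating over $[R,+\infty)$ (using $V^{R\prime}(\infty)=0$) gives the first-integral identity
$$\tfrac12\bigl[V^{R\prime}(R)\bigr]^2 = W(0) + \int_R^{+\infty}\frac{n-1}{r}\bigl[V^{R\prime}(r)\bigr]^2\,dr,$$
and exponential concentration of $V^{R\prime}$ near $r=R$ (via (W3)) forces $V^{R\prime}(R)\to\sqrt{2W(0)}$ as $R\to+\infty$.

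The core step is to show $u\leq \tilde V^R$ on $\Omega$, by a sliding argument. For $s\leq 0$, let $\tilde V^R_s(x):=V^R(|x-c_R-se_n|)$, so $\tilde V^R_0=\tilde V^R$. When $|s|$ is very large, the ball $B_R(c_R+se_n)$ lies far below $\Omega$, and a direct comparison of the exponential decay rates of $1-u$ (in $\mathrm{dist}(\cdot,\partial\Omega)$) and $1-\tilde V^R_s$ (in $\mathrm{dist}(\cdot,\partial B_R(c_R+se_n))$), both controlled by $\sqrt{\kappa}$ from (W3), yields $\tilde V^R_s>u$ throughout $\Omega$. Now slide $s$ upward to $0$: if at some $s\in(-\infty,0)$ the inequality $\tilde V^R_s\geq u$ first degenerated to equality at an interior point $p\in\Omega$, then $w:=\tilde V^R_s-u\geq 0$ would satisfy a linear elliptic equation $\Delta w=c(x)w$ with $c$ bounded on the connected open set $\Omega$ and attain a zero minimum at $p$. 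The strong maximum principle forces $w\equiv 0$ on $\Omega$, i.e.\ $\tilde V^R_s\equiv u$, contradicting the fact that $\tilde V^R_s>0=u$ on $\partial\Omega$ since $B_R(c_R+se_n)$ is strictly inside $\Omega^c$ for $s<0$. Letting $s\to 0^-$ gives $u\leq\tilde V^R$ on $\Omega$.

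Finally, at the tangent point $x_0^R$, both $u$ and $\tilde V^R$ vanish; the inward unit normal $\nu$ to $\partial\Omega$ at $x_0^R$ coincides with the outward unit normal of $\partial B_R(c_R)$. Since $u\leq\tilde V^R$ in $\Omega$ with equality at $x_0^R$, and both gradients there are parallel to $\nu$, the boundary-point directional comparison gives
$$|\nabla u(x_0^R)|=\partial_\nu u(x_0^R)\leq \partial_\nu\tilde V^R(x_0^R)=V^{R\prime}(R).$$
As $|\nabla u|$ is constant on $\partial\Omega$, this bound holds at every boundary point; letting $R\to+\infty$ and invoking the first-integral estimate yields $|\nabla u|\leq\sqrt{2W(0)}$. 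The main obstacle is the verification, in the sliding step, that $\tilde V^R_s>u$ on all of the unbounded set $\Omega$ for some sufficiently negative $s$: this hinges on matching the global exponential decay rates of $1-u$ and $1-\tilde V^R_s$ at spatial infinity in $\Omega$. A cleaner implementation truncates to $\Omega\cap B_M$, applies the strong maximum principle there, controls the spherical cap $\Omega\cap\partial B_M$ through the exponential decay toward $1$, and then sends $M\to+\infty$.
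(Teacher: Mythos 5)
Your overall strategy is the same as the paper's: fit a large ball inside $\Omega^c$ tangent to $\partial\Omega$ from below, build a radial barrier in its complement that vanishes on the sphere, compare with $u$, apply Hopf at the tangent point, and let $R\to+\infty$ so that the normal derivative of the barrier tends to $\sqrt{2W(0)}$ via the first-integral identity. The difference is in how the comparison $u\leq$ barrier is established, and this is where your argument has a genuine gap. You try to prove the inequality globally on the unbounded set $\Omega$ (equivalently, in the whole exterior of the ball) by a vertical sliding argument, and the initialization of the slide is exactly the problematic step you yourself flag. The claim that $\tilde V^R_s>u$ throughout $\Omega$ for $s$ very negative hinges on a pointwise inequality of the form $\mathrm{dist}(x,\partial B_R(c_R+se_n))\geq\mathrm{dist}(x,\partial\Omega)+|s|$ (plus a matching of the exponential rates of $1-u$ and $1-\tilde V^R_s$, which are $\sqrt{W''(1)}$, not the lower bound $\sqrt{\kappa}$ from (W3)); for a general epigraph $\varphi$ this geometric inequality is not evident, and the fallback of truncating to $\Omega\cap B_M$ does not close it either, because on the lateral cap $\Omega\cap\partial B_M$ both $u$ and $\tilde V^R_s$ approach $1$ and you have no a priori control on which one is larger there.

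The paper sidesteps this entirely by never leaving a bounded domain. It works in the annulus $B_{2R}(0,-t^\ast e_n)\setminus B_R(0,-t^\ast e_n)$, observes via the Kato inequality that $u$ is a (distributional) subsolution of $\Delta v=W'(v)$ there, notes that the constant $1$ is a supersolution with $1>u$, and invokes the sub/super solution method to produce a solution $u^R$ of the Dirichlet problem on the annulus with $u^R=0$ on $\partial B_R$, $u^R=1$ on $\partial B_{2R}$, trapped between $u$ and $1$, so $u^R\geq u$ for free. This gives $u\leq u^R$ on a compact set, Hopf at $x_0$, and the same $R\to+\infty$ asymptotics. If you want to keep your exterior-barrier route, you should either prove the required distance inequality between $\partial\Omega$ and the translated spheres, or replace the global slide by a comparison on an annulus (which then essentially reproduces the paper's argument).
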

\begin{proof}
As in the previous lemma, for any $R>0$ we find a ball
$B_R(0,-t^\ast e_n)\subset\Omega^c$  tangent to $\partial\Omega$ at
a point $x_0$.

In $B_{2R}(0,-t^\ast e_n)\setminus B_R(0,-t^\ast e_n)$, by the Kato
inequality,
\[\Delta u\geq W^\prime(u).\]
Clearly the constant function $1$ is a sup solution of this equation
in $B_{2R}(0,-t^\ast e_n)\setminus B_R(0,-t^\ast e_n)$. Because
$1>u$, by the standard sup-sub solution method, there exists a
solution $u^R>u$ in $B_{2R}(0,-t^\ast e_n)\setminus B_R(0,-t^\ast
e_n)$ satisfying $u^R=0$ on $\partial B_R(0,-t^\ast e_n)$ and
$u^R=1$ on $\partial B_{2R}(0,-t^\ast e_n)$. Then the Hopf lemma
implies that
\[|\nabla u(x_0)|=\frac{\partial u}{\partial\nu}(x_0)\leq\frac{\partial u^R}{\partial\nu}(x_0)
=|\nabla u^R(x_0)|\leq\sqrt{2W(0)}+o_R(1).\] Here the last identity
follows from the same argument as in the previous lemma.
\end{proof}

Combining Lemma (\ref{upper}) and Lemma (\ref{lower}) we obtain the
proof of Theorem \ref{reduction to free boundary}. Note that up to
now we have not used the monotonicity condition \eqref{monotinicity
in one direction}.  However, this condition is crucial for the
following result.
\begin{lem}
$u$ is a local minimizer of the functional \eqref{functional} in
$\R^n$.
\end{lem}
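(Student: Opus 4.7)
The approach is to promote the monotonicity hypothesis $\partial_{x_n}u>0$ into a complete calibrating foliation of $\R^n$ and use it to certify local minimality, in the spirit of the Alberti--Ambrosio--Cabr\'e proof that monotone solutions of the Allen--Cahn equation are local minimizers, adapted to the free-boundary functional \eqref{functional}.

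The first step is to record the foliation properties of the translates $u_t(x):=u(x+te_n)$, $t\in\R$. The monotonicity assumption and the strong maximum principle give that $\{u_t\}_{t\in\R}$ is a \emph{strictly} increasing family of solutions of \eqref{equation 0}; by Theorem \ref{reduction to free boundary} each $u_t$ carries the same Neumann datum $|\nabla u_t|=\sqrt{2W(0)}$ on $\partial\{u_t>0\}$; and by Lemma \ref{lem 5.1}, on every bounded set $u_t\equiv 0$ for $t$ sufficiently negative while $u_t\to 1$ uniformly as $t\to+\infty$. Thus the foliation is \emph{complete}, and its leaves sweep all admissible values in $[0,1]$.

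The second step is a modular identity. Fix a ball $B\subset\R^n$ and a competitor $v$ with $v=u$ on $\partial B$, which after truncation I may assume satisfies $0\leq v\leq 1$. A pointwise case analysis yields, for every $t\in\R$,
\[
\mathcal{E}(u_t\vee v,B)+\mathcal{E}(u_t\wedge v,B)\;=\;\mathcal{E}(u_t,B)+\mathcal{E}(v,B),
\]
where the kinetic contribution is handled by the a.e.\ identity $|\nabla(a\vee b)|^2+|\nabla(a\wedge b)|^2=|\nabla a|^2+|\nabla b|^2$ and the one-phase potential $W(\cdot)\chi_{\{\cdot>0\}}$ also splits modularly under a direct case check. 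It therefore suffices to prove the two one-sided inequalities $\mathcal{E}(u,B)\leq \mathcal{E}(u\vee v,B)$ and $\mathcal{E}(u,B)\leq \mathcal{E}(u\wedge v,B)$, since adding them and combining with the modular identity at $t=0$ yields $\mathcal{E}(u,B)\leq \mathcal{E}(v,B)$.

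Each one-sided inequality is established by sliding along the foliation. For the upper one, consider $h_t:=u_t\wedge(u\vee v)$ for $t\geq 0$; this interpolates continuously between $h_0=u$ and $h_{+\infty}=u\vee v$, and I would show that $t\mapsto \mathcal{E}(h_t,B)$ is non-decreasing by computing its first variation, invoking $\Delta u_t=W'(u_t)$ inside $\{u_t>0\}$ together with the overdetermined condition $|\nabla u_t|=\sqrt{2W(0)}$ on $\partial\{u_t>0\}$ from Theorem \ref{reduction to free boundary}. The lower one-sided inequality is handled by the symmetric sliding family $u_{-t}\vee(u\wedge v)$ for $t\geq 0$.

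The main technical obstacle is precisely this first-variation calculation along the sliding family $h_t$, which must be carried out rigorously across the moving, only Lipschitz, free boundaries of $u_t$ and $h_t$. Theorem \ref{reduction to free boundary} is what makes the argument close: the critical boundary contribution $\int_{\partial\{u_t>0\}}\bigl(\tfrac12|\nabla u_t|^2-W(0)\bigr)\,X\cdot\nu$ arising from the moving free boundary vanishes identically, so the sliding derivative reduces to a non-negative bulk term governed by the Euler--Lagrange equation. A secondary subtlety is to justify the pointwise modular identity and the sliding near the free boundary; here the strict non-degeneracy hypothesis (W5), together with the uniform limits $u_t\to 0,1$ supplied by Lemma \ref{lem 5.1} and the strict bound $u<1$ in $\Omega$, provides enough regularity for the variational calculus to be carried out.
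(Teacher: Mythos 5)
Your proposal takes a genuinely different route from the paper: an energy calibration built on the modular identity
\[
\mathcal{E}(u_t\vee v,B)+\mathcal{E}(u_t\wedge v,B)=\mathcal{E}(u_t,B)+\mathcal{E}(v,B),
\]
followed by a claimed monotonicity of energy along the sliding family $h_t=u_t\wedge(u\vee v)$. The paper instead argues by direct comparison: it picks a minimizer $v$ of \eqref{functional} in a ball $B_R(x_0)$ with $v=u$ on $\partial B_R(x_0)$, and slides $u^t=u(\cdot+te_n)$. Since by Lemma~\ref{lem 5.1} one has $u^t\to1$ uniformly on $\overline{B_R(x_0)}$ as $t\to+\infty$ and $u^t\equiv0$ there for $t$ sufficiently negative, the strong maximum principle and the Hopf lemma for the free boundary problem \eqref{equation} (where Theorem~\ref{reduction to free boundary} enters, guaranteeing both $u^t$ and $v$ carry the Neumann datum $\sqrt{2W(0)}$) show that the smallest $t$ for which $u^t\geq v$ must be $t=0$; a symmetric slide from below gives $u\leq v$, so $u\equiv v$. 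No first-variation or energy-monotonicity computation is required.

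There is a genuine gap in your argument, one you yourself flag as the ``main technical obstacle'': the assertion that $t\mapsto\mathcal{E}(h_t,B)$ is non-decreasing is never established, and it is the entire content of the lemma once the (correct) modular identity has reduced matters to the two one-sided inequalities. This first variation is delicate for several independent reasons: the competitor $v$, and hence $u\vee v$, is merely an $H^1$ function, so the contact set $\{u_t=u\vee v\}$ across which you need to integrate by parts has no a priori regularity; both $\partial\{u_t>0\}$ and $\partial\{h_t>0\}$ move with $t$; and besides the free-boundary term (which your appeal to Theorem~\ref{reduction to free boundary} correctly annihilates) and the bulk term (which the Euler--Lagrange equation cancels), one must extract a \emph{sign} from the term generated by the moving contact set $\{u_t=u\vee v\}$, and nothing in the write-up addresses this. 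As it stands the argument is a plan, not a proof. The paper's comparison-principle route uses the same ingredients --- Lemma~\ref{lem 5.1}, the monotonicity \eqref{monotinicity in one direction}, and the free boundary condition from Theorem~\ref{reduction to free boundary} --- but closes the argument with the maximum principle alone, sidestepping every one of these difficulties.
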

\begin{proof}
Assume by the contrary, there exists a ball $B_R(x_0)$ such that $u$
is not a minimizer of the functional \eqref{functional} in this ball
(under the same boundary condition as $u$). Let $v$ be such a
minimizer.

For any $t\in\R$, consider
\[u^t(x):=u(x+te_n).\]
By Lemma \ref{lem 5.1}, for all $t$ large, $u^t>0$ and $u^t>v$ in
$B_R(x_0)$. Let
\[t_+:=\inf\{t: u^s\geq v \ \mbox{on}\ \overline{B_R(x_0)},\ \forall s>t\}.\]
We claim that $t_+=0$.

Assume by the contrary, $t_+>0$. By definition and continuity,
$u^{t_+}\geq  v$ on $\overline{B_R(x_0)}$. Moreover, by the
monotonicity of $u$, $u^{t_+}\neq v$ on $\partial B_R(x_0)$. Then by
the strong maximum principle and Hopf lemma, $\{v>0\}\cap B_R(x_0)$
is strictly contained in $\{u^{t_+}>0\}\cap B_R(x_0)$ and $u^{t_+}>
v$ strictly on $\overline{\{v>0\}\cap B_R(x_0)}$.

By continuity, there exists an $\epsilon>0$ such that, for all
$t\in(t_+-\epsilon,t_+]$, $u^t\geq v$ on $\overline{B_R(x_0)}$. This
contradicts the definition of $t_+$. Hence we must have $t_+=0$,
which implies that $u\geq v$ on $\overline{B_R(x_0)}$.

Because for all $t>0$ large, $u^{-t}\equiv 0$ on
$\overline{B_R(x_0)}$, we can also slide from below. This gives
$u\leq v$ on $\overline{B_R(x_0)}$. Hence $u\equiv v$ is the unique
minimizer of the energy functional \eqref{functional}.
\end{proof}

With this lemma in hand, we can perform the blowing down analysis as
in the one phase free boundary problem. By the proof of
\cite[Theorem 2.4]{Savin}, we can show (using the notations in
Section 3)
\begin{lem}
If $n\leq 8$, $\Omega_\infty$ is an half space.
\end{lem}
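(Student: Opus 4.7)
The plan is to combine three ingredients: the monotonicity assumption \eqref{monotinicity in one direction}, the local minimality of $u$ just established in the previous lemma, and the classical Bernstein theorem for entire minimal graphs in dimensions $n\leq 8$.

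First I would show that $\Omega_\infty$ inherits an epigraph structure in the $e_n$-direction. Since $\partial u/\partial x_n>0$, for each fixed $x'$ the slice $\{x_n: u_\varepsilon(x',x_n)>0\}$ is a half-line, so $\{u_\varepsilon>0\}$ is monotone in $e_n$. Because $u_\varepsilon\to\chi_{\Omega_\infty}$ in $L^1_{loc}$, the limit set inherits this monotonicity, hence $\Omega_\infty=\{x_n>\psi_\infty(x')\}$ for some lower-semicontinuous $\psi_\infty:\R^{n-1}\to[-\infty,+\infty]$. Lemma~\ref{lem 5.1} (ensuring $u\equiv 0$ below a certain height and $u\to 1$ above), together with the clearing-out Corollary~\ref{clearing out 1} (which forces $0\in\partial\Omega_\infty$) and the upper energy bound of Proposition~\ref{energy growth}, rule out the degenerate cases $\psi_\infty\equiv\pm\infty$ and identify $\Omega_\infty$ as a nontrivial monotone set whose boundary passes through the origin.

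Next I would invoke the previous lemma (that $u$ is a local minimizer of \eqref{functional}): the blow-downs $u_\varepsilon$ are then also minimizers, so by the final proposition of Section~3, $\Omega_\infty$ is a set of locally minimal perimeter in $\R^n$. Combined with the epigraph structure from the first step, $\partial\Omega_\infty$ is an entire area-minimizing graph over $\R^{n-1}$. By De~Giorgi's regularity theory, the singular set of an area-minimizing boundary has Hausdorff dimension at most $n-8$, which is empty in our range $n\leq 8$; hence $\psi_\infty$ is smooth and satisfies the minimal surface equation on all of $\R^{n-1}$.

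Finally, by the Bernstein theorem (proved successively by De~Giorgi, Almgren, and Simons through $n-1\leq 7$, and sharp in view of the Bombieri--De~Giorgi--Giusti counterexample in dimension~$8$), any entire smooth minimal graph over $\R^{n-1}$ with $n\leq 8$ must be affine. Therefore $\psi_\infty$ is linear and $\Omega_\infty$ is a half space. The main obstacle I anticipate is in the first step: rigorously verifying that $\psi_\infty$ is finite-valued on \emph{all} of $\R^{n-1}$ rather than only on a proper subdomain, so that the resulting boundary is actually a global graph to which Bernstein's theorem can be applied. This requires carefully combining the $L^1_{loc}$ convergence $\chi_{\{u_\varepsilon>0\}}\to\chi_{\Omega_\infty}$ with the non-degeneracy and energy estimates of Section~2 to prevent vertical escape of the graph at infinity.
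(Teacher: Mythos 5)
Your overall strategy is sound, but there is a genuine gap and a small but consequential error, both of which trace back to a single missing ingredient: the \emph{cone structure} of the blow-down. The paper's one-line proof defers to Savin's argument, and the crucial point in that argument is that the rescalings $u_\varepsilon$ do not merely converge to \emph{some} perimeter-minimizing set $\Omega_\infty$ --- they converge to a \emph{minimizing cone}. This follows because $E(r;u,0)$ is non-decreasing (Proposition~\ref{monotonicity formula}) and bounded above (Proposition~\ref{energy growth}), so $\lim_{r\to\infty}E(r;u,0)$ exists; passing to the limit, $r^{1-n}\mu(B_r)$ is \emph{constant} in $r$, and the rigidity in the monotonicity formula then forces the blow-down varifold, and hence $\Omega_\infty$, to be a cone. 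You never use this, and that omission produces both problems below.

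First, the gap you yourself flag --- whether $\psi_\infty$ is finite on all of $\R^{n-1}$ --- is not resolvable by the clearing-out and energy bounds alone; those only rule out $\psi_\infty\equiv\pm\infty$, and a minimizing set could a priori have $\psi_\infty=+\infty$ on a proper cone of directions without violating any of those estimates. The cone structure is exactly what lets one bypass this: once $\Omega_\infty$ is a minimizing cone, for $n\leq 7$ Simons' theorem immediately gives a half-space without ever discussing finiteness of $\psi_\infty$; for $n=8$ the epigraph constraint (monotonicity in $e_n$) is what excludes the remaining singular minimizing cone, the Simons cone, whose boundary is not a graph in any direction. Second, your statement that ``the singular set of an area-minimizing boundary has Hausdorff dimension at most $n-8$, which is empty in our range $n\leq 8$'' is incorrect at $n=8$: dimension $n-8=0$ allows isolated singular points, which is precisely the case relevant to the Simons cone and must be excluded by the epigraph structure, not by the dimension count alone. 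Thus your invocation of Bernstein's theorem for entire smooth graphs needs the cone structure (or some substitute) to be justified in the borderline dimension, and as written it does not close the argument.
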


With this lemma in hand, we can use the method in the previous
section to show that $u$ is one dimensional, thus completing the
proof of Theorem \ref{main result}.

Finally we prove Theorem \ref{main result 3}, \ref{main result 5}
and \ref{main result 4}.
\begin{proof}[Proof of Theorem \ref{main result 3} and  Theorem \ref{main result 5}]
By \cite{BCN}, $u$ satisfies the monotonicity condition
\eqref{monotinicity in one direction} in $\Omega$. As in the
previous proof, $|\nabla u|=\sqrt{2W(0)}$ on $\partial\Omega$ and
$u$ is a local minimizer for the functional \eqref{functional}. Then
we can perform the blowing down analysis as before.

If $\varphi $ is globally Lipschitz, the blowing down limit
$\Omega_\infty$ is still the epigraph associated to a Lipschitz
function $\varphi_\infty$ defined on $\R^{n-1}$. Since
$\Omega_\infty$ has minimal perimeter, $\varphi_\infty$ satisfies
the minimal surface equation. By \cite[Theorem 17.5]{Giusti},
$\varphi_\infty$ must be an affine function. In other words,
$\Omega_\infty$ is an half space. Then we deduce that $\Omega$ is an
half space and $\varphi$ is an affine function. However, this
contradicts the coerciveness of $\varphi$.
\end{proof}

\begin{proof}[Proof of Theorem \ref{main result 4}]
In $\R^2$, by Remark \ref{rmk 1}, $\{u=0\}$ is a convex set. Hence
the function $\varphi$ is concave.

First,
\begin{eqnarray*}
-\int_{B_R(0)\cap\{u>0\}}W^\prime(u)&=&-\int_{B_R(0)\cap\{u>0\}}\Delta
u\\
&=&-\int_{\partial B_R(0)\cap\{u>0\}}\frac{\partial u}{\partial
r}+\int_{B_R(0)\cap\partial\{u>0\}}|\nabla u|\\
&\leq &CR.\quad \mbox{(by the Lipschitz bound on $u$)}
\end{eqnarray*}
By hypothesis on $W$, $-W^\prime\geq cW$ on $(\gamma, 1)$. Thus we
obtain
\begin{equation}\label{5.3}
\int_{B_R(0)\cap\{u>\gamma\}}W(u)\leq CR.
\end{equation}

Next, as in the proof of \cite[Theorem 1.2, (b)]{BCN}, there exists
an $M>0$ so that
\begin{equation}\label{5.5}
\{u<\gamma\}\subset\{x: dist(x,\partial\{u>0\})<M\}.
\end{equation}
 By the
convexity of $\partial\{u>0\}$,
\[\big|\{x:
dist(x,\partial\{u>0\})<M\}\cap B_R(0)\big|\leq CR.\] Thus
\begin{equation}\label{5.4}
\int_{B_R(0)\cap\{0<u<\gamma\}}W(u)\leq CR.
\end{equation}

Combining \eqref{5.3}, \eqref{5.4} and the Modica inequality we see
\[\int_{B_R(0)}\frac{1}{2}|\nabla u|^2+W(u)\chi_{\{u>0\}}\leq CR.\]
With this bound in hand, we can perform the blowing down analysis
using results in Section 3. In particular, we obtain a stationary
integer $1$-rectifiable varifold $V$ from the sequence
\[u_\varepsilon(x):=u(\varepsilon^{-1}x).\]

$V$ has the following form: there are finitely many positive
integers $n_i$ and unit vectors $e_i$ with the corresponding rays
$L_i:=\{te_i: t>0\}$, such that
\begin{equation}\label{5.6}
V=\sum_i n_i[L_i],
\end{equation}
where $[L_i]$ is the standard varifold associated to $L_i$.

Because $V$ is stationary, we have the following balancing formula:
\begin{equation}\label{5.7}
\sum_i n_i e_i=0.
\end{equation}

 On the other hand, from the convexity of $\{u=0\}$ it is clear
that as $\varepsilon\to0$, $\varepsilon \{u>0\}$ converges to a
limit $\Omega_\infty$ in the Hausdorff distance, with
$\R^2\setminus\Omega_\infty$ convex. Moreover, by assuming
$0\in\partial\{u=0\}$, $\R^2\setminus\Omega_\infty\subset\{u=0\}$.
Hence for all $\varepsilon>0$, $u_\varepsilon=0$ on
$\R^2\setminus\Omega_\infty$. By \eqref{5.5}, $u_\varepsilon\to1$
a.e. in $\Omega_\infty$. This then implies that the support of $V$
lies in $\partial\Omega_\infty$. Combining \eqref{5.7} and the
convexity of $\R^2\setminus\Omega_\infty$, $\Omega_\infty$ must be
an half plane.

What we have proved says that, the limit cone (at infinity) of the
concave curve $\{x_2=\varphi(x_1)\}$ is a line. By convexity, this
implies that $\{x_2=\varphi(x_1)\}$ itself is a line.

 Finally, there are many ways to deduce that $u$
is one dimensional, see for example \cite{BCN} again.
\end{proof}

\bigskip

\noindent {\bf Acknowledgments.} K. Wang is supported by NSF of
China No. 11301522. J. Wei is partially supported by NSERC of
Canada.

\bigskip


\begin{thebibliography}{50}
\small

\bibitem{Allard}
W. Allard, An integrality theorem and a regularity theorem for
surfaces whose first variation with respect to a parametric elliptic
integrand is controlled, {\em Proc. Symp. Pure Math. } 44 (1986),1-28.


\bibitem{AC}
Alt, H. W.; Caffarelli, L. A. Existence and regularity for a minimum
problem with free boundary. {\em J. Reine Angew. Math.}  325  (1981),
105-144.


\bibitem{AmC}
L. Ambrosio and X. Cabr\'e,  Entire solutions of semilinear elliptic equations
in $\R^3$ and a conjecture of De Giorgi, {\em J. Amer. Math. Soc.} 13
(2000), 725--739.


\bibitem{BCN0} H. Berestycki, L.A. Caffarelli, L. Nirenberg, Uniform estimates for regularization of free boundary problems. {\em   Analysis and partial differential equations},
 567–619, Lecture Notes in Pure and Appl. Math., 122, Dekker, New York, 1990.


\bibitem{BCN}
H. Berestycki, L.A. Caffarelli, L. Nirenberg, Monotonicity for
elliptic equations in unbounded Lipschitz domains. {\em Comm. Pure Appl.
Math. } 50(11), 1089-1111 (1997).

\bibitem{CC} L.A. Caffarelli, A. Cordoba,  Phase transitions: uniform regularity  of the intermdediate layers. {\em J. Reine Angew. Math. } 593(2006), 209-235.


\bibitem{CJK}
L.A. Caffarelli, D. Jerison and C.E. Kenig, Global energy minimizers
for free boundary problems and full regularity in three dimension,
Contemp. Math., 350, Amer. Math. Soc., Providence, RI (2004), 83-97.


\bibitem{De Giorgi}
E. De Giorgi, Convergence problems for functionals and operators,
Proc. Int. Meeting on Recent Methods in Nonlinear Analysis (Rome,
1978), 131-188, Pitagora, Bologna (1979).

\bibitem{DPW}
 M. del Pino, F. Pacard and J. Wei,
  Serrin's overdetermined problems and constant mean curvature surfaces,
to appear in  {\em Duke Math Journal}.






\bibitem{dkwdg}
M. del Pino, M. Kowalczyk and J. Wei,  On De Giorgi's Conjecture in Dimensions $N \ge 9$,{\em Annals of Mathematics } 174 (2011), no. 3, 1485–-1569.


\bibitem{F0}
A. Farina, 1d symmetry for solutions of quasilinear elliptic
equations. {\em Trans. Amer. Math. Soc} 363 (2011) no. 2, 579-609.

\bibitem{F1}
 A.
Farina and E. Valdinoci, Flattening results for elliptic PDEs in
unbounded domains with applications to overdetermined problems.
{\em Arch. Ration. Mech. Anal.} 195 (2010), no. 3, 1025-1058.

\bibitem{F3}
 A. Farina,
L. Mari and E. Valdinoci, Splitting theorems, symmetry results and
overdetermined problems for Riemannian manifolds. {\em Comm. Partial
Differential Equations} 38 (2013), no. 10, 1818-1862.

\bibitem{gg} N. Ghoussoub and C. Gui,  On a conjecture of De Giorgi and some related
problems, {\em Math. Ann.} 311 (1998), 481--491.


\bibitem{pacard}
L. Hauswirth,  F. Helein and F. Pacard, On an overdetermined elliptic problem, {\em  Pacific J. Math.} 250 (2011), no. 2, 319–-334



\bibitem{Giusti}
E. Giusti, Minimal surfaces and functions of bounded variation. No.
80. Springer, 1984.

\bibitem{H-T}
J. Hutchinson and Y. Tonegawa, Convergence of phase interfaces in
the van der Waals-Cahn-Hilliard theory, {\em Calc. Var. Partial
Differential Equations } 10 (2000), no. 1, 49-84.

\bibitem{KLP}
M. Kowalczyk, Y. Liu and F. Pacard, The space of 4-ended solutions
to the Allen-Cahn equation in the plane.  {\em Annales de l'Institut
Henri Poincare (C) Non Linear Analysis} Vol. 29. No. 5, 761-781.

\bibitem{L}
F.-H. Lin,
 Gradient estimates and blow-up analysis for stationary harmonic maps.
{\em  Ann. of Math.} (2) 149 (1999), no. 3, 785-829.

\bibitem{LR}
F.-H. Lin and T.  Rivi\`{e}re,  Energy quantization for harmonic
maps. {\em Duke Math. J.} 111 (2002), no. 1, 177-193.




\bibitem{Modica}
L. Modica, A gradient bound and a Liouville theorem for nonlinear
Poisson equations. {\em Comm. Pure Appl. Math.} 38 (1985), no. 5, 679-684.

\bibitem{Modica 2}
L. Modica, The gradient theory of phase transitions and the minimal
interface criterion, {\em  Archive for Rational Mechanics and Analysis} 98
(1987), no. 3, 123-142.


\bibitem{Pre}
 D. Preiss, Geometry of measures in $R^n$: distribution, rectifiability and density, {\em Annals of Math.}  125 (1987), 537-643.


\bibitem{ros}
A. Ros, P. Sicbaldi,
Geometry and topology of some overdetermined
elliptic problems, {\em  J. Differential Equations} 255 (2013) 951–-977.

\bibitem{Savin}
O. Savin, Regularity of at level sets in phase transitions, {\em Ann. of
Math. } 169 (2009), 41-78.



\bibitem{Serrin}
J. Serrin. A symmetry problem in potential theory. {\em Arch.Rat.Mech.
Anal.} 43 (1971), 304-318.

%\bibitem{Simons}J. Simons, Minimal varieties in Riemannian manifolds. Ann. of Math.88 (1968), 62-105.

\bibitem{Sic}
P. Sicbaldi,
New extremal domains for the first eigenvalue of the Laplacian in flat tori,
{\em Calc. Var. Partial Differential Equations } 37 (2010), no. 3-4, 329–-344.


\bibitem{Wang}
K. Wang, A new proof of Savin's theorem on Allen-Cahn equations,
arXiv preprint arXiv:1401.6480v2.

\end{thebibliography}
\end{document}